\documentclass[12pt,reqno]{amsart}

\textheight=21truecm
\textwidth=15truecm
\voffset=-1cm
\hoffset=-1cm

\usepackage{color}
\usepackage{amsmath, amsthm, amssymb}
\usepackage{amsfonts}

\usepackage[ansinew]{inputenc}
\usepackage[dvips]{epsfig}
\usepackage{graphicx}
\usepackage[english]{babel}
\usepackage{hyperref}
\theoremstyle{plain}
\newtheorem{thm}{Theorem}[section]
\newtheorem{cor}[thm]{Corollary}
\newtheorem{lem}[thm]{Lemma}
\newtheorem{prop}[thm]{Proposition}

\theoremstyle{definition}
\newtheorem{defi}[thm]{Definition}

\theoremstyle{remark}
\newtheorem{rem}[thm]{Remark}

\numberwithin{equation}{section}

\newcommand{\average}{{\mathchoice {\kern1ex\vcenter{\hrule height.4pt
width 6pt depth0pt} \kern-9.7pt} {\kern1ex\vcenter{\hrule
height.4pt width 4.3pt depth0pt} \kern-7pt} {} {} }}

\def\R{\mathbb{R}}

\begin{document}

\title{Pohozaev identities for anisotropic integro-differential operators}

\author{Xavier Ros-Oton}
\address{The University of Texas at Austin, Department of Mathematics, 2515 Speedway, Austin, TX 78751, USA}
\email{ros.oton@math.utexas.edu}

\author{Joaquim Serra}
\address{Universitat Polit\`ecnica de Catalunya, Departament de Matem\`{a}tica  Aplicada I, Diagonal 647, 08028 Barcelona, Spain}
\email{joaquim.serra@upc.edu}

\author{Enrico Valdinoci}
\address{Weierstrass Institut f\"ur Angewandte Analysis und Stochastik, Mohrenstrasse 39, 10117 Berlin, Germany}
\email{enrico.valdinoci@wias-berlin.de}

\thanks{XR and JS were supported by grant MTM2014-52402-C3-1-P}

\keywords{Pohozaev identity, stable L\'evy processes, nonlocal operator.}

\maketitle

\begin{abstract}
We find and prove new Pohozaev identities and integration by parts type formulas for anisotropic integro-differential operators of order $2s$, with $s\in(0,1)$.

These identities involve local boundary terms, in which the quantity $u/d^s|_{\partial\Omega}$ plays the role that $\partial u/\partial\nu$ plays in the second order case.
Here, $u$ is any solution to $Lu=f(x,u)$ in $\Omega$, with $u=0$ in $\R^n\setminus\Omega$, and $d$ is the distance to $\partial\Omega$.
\end{abstract}

\section{Introduction and results}

Integro-differential equations arise naturally in the study of stochastic processes with jumps, and more precisely of L\'evy processes.
In the context of L\'evy processes, these equations play the same role that second order PDEs play in the theory of Brownian motions.
This is because infinitesimal generators of  L\'evy processes are integro-differential operators.

A very special class of L\'evy processes is the one corresponding to stable processes.
These are the processes that satisfy certain scaling properties, and in particular they satisfy that the sum of two i.i.d. stable processes is also stable.
The infinitesimal generator of any symmetric stable L\'evy process is of the form
\begin{equation}\label{L-singular}
Lu(x)=\int_{S^{n-1}}\int_{-\infty}^{+\infty}\bigl(2u(x)-u(x+\theta r)-u(x-\theta r)\bigr)\frac{dr}{|r|^{1+2s}}\,d\mu(\theta),
\end{equation}
where $\mu$ is any finite measure on the unit sphere, called the \emph{spectral measure}, and $s\in(0,1)$; see \cite{ST,Levy,Nolan}.

When this measure is absolutely continuous with respect to the classical measure on the sphere, then it can be written as
\begin{equation}\label{L}
Lu(x)= \int_{\R^n}\bigl(2u(x)-u(x+y)-u(x-y)\bigr)\frac{a\left(y/|y|\right)}{|y|^{n+2s}}dy,
\end{equation}
where $a\in L^1(S^{n-1})$ is nonnegative and even ---i.e., $a(\theta)=a(-\theta)$.

As said before, integro-differential equations appear naturally when studying L\'evy processes.
For example, the solution $u(x)$ to the Dirichlet problem in a domain $\Omega$ gives the expected cost of a random motion starting at point $x\in\Omega$, the running cost being the right hand side of the equation.
When this right hand side is $f\equiv1$ in~$\Omega$, then the solution $u(x)$ is the expected first time at which the particle exits the domain.

Linear and nonlinear equations involving this type of operators have been widely studied, from the point of view of both Probability and Analysis; see \cite{Bass2,Bogdan1,CS,FKV,Grubb,K,KM,PT,RS-Dir} for example.

Here we study integro-differential problems of the form
\begin{equation}\label{eq}
\left\{ \begin{array}{rcll}
L u &=&f(x,u)&\textrm{in }\Omega \\
u&=&0&\textrm{in }\R^n\backslash\Omega,
\end{array}\right.
\end{equation}
where $\Omega\subset\R^n$ is a bounded domain, and $L$ is given by either~\eqref{L} or \eqref{L-singular}.

In this paper, we find and prove new Pohozaev-type identities for solutions to \eqref{eq}.

Pohozaev-type identities have been widely used in the theory of PDEs.
In elliptic equations these identities are used to prove sharp nonexistence results, partial regularity of solutions, concentration phenomena, unique continuation properties, or rigidity results \cite{P,R,DS,Han,V,Weinberger}.
Moreover, they are also frequently used in hyperbolic equations, control theory, harmonic maps, and geometry \cite{BO,Strauss,Caz,CZ,Sc,KW,Pol}.

For integro-differential equations, the first identity of this type was established in \cite{RS-Poh}, where
the Pohozaev identity for the fractional Laplacian was proved.
Here, we extend the method introduced in \cite{RS-Poh} to establish Pohozaev-type identities for more general operators of the form \eqref{L} and \eqref{L-singular}.
As explained below, new ideas are required to treat the anisotropic case, in which we obtain the extra factor in the boundary term.

We recall that, for second order equations, Pohozaev-type identities usually follow from the divergence theorem or from the integration by parts formula.
However, for integro-differential equations these tools are not available, and thus the approach to these identities must be completely different.

\subsection{Assumptions}

In order to ensure the regularity of solutions to \eqref{eq}, one has to impose some ellipticity assumptions on the measure $\mu$.
When $L$ is of the form \eqref{L} we will assume that
\begin{equation}\label{ellipt}
0<\lambda\leq \int_{S^{n-1}}a(\sigma)d\sigma,\qquad 0\leq a(\theta)\leq \Lambda<\infty\quad\textrm{for all}\ \theta\in S^{n-1},
\end{equation}
while when $L$ is of the form \eqref{L-singular} we will assume
\begin{equation}\label{ellipt-singular}
0<\lambda\leq \inf_{\nu\in S^{n-1}}\int_{S^{n-1}}|\nu\cdot\sigma|^{2s}d\mu(\sigma),\qquad  \int_{S^{n-1}}d\mu\leq \Lambda<\infty.
\end{equation}

Moreover, in our results we will assume that either
\begin{equation}\label{A1}
L\ \textrm{is of the form \eqref{L}-\eqref{ellipt},}\ \textrm{and}\ \Omega\ \textrm{is}\ C^{1,1};
\end{equation}
or
\begin{equation}\label{A2}
L\ \textrm{is of the form \eqref{L-singular}-\eqref{ellipt-singular},}\ \textrm{and}\ \Omega\ \textrm{is}\ convex\ \textrm{and}\ C^{1,1}.
\end{equation}

The convexity of the domain $\Omega$ in \eqref{A2} is needed in order to ensure certain interior regularity of solutions to \eqref{eq}, as explained later on in this Introduction.

\subsection{Main results}

The following is our main result.

\begin{thm}\label{thpoh}
Let $s\in(0,1)$, and assume that $L$ and $\Omega$ satisfy either \eqref{A1} or \eqref{A2}.

Let $f$ be any locally Lipschitz function, $u$ be any bounded solution to \eqref{eq}, and $d(x)={\rm dist}(x,\R^n\setminus\Omega)$.

Then,
\[u/d^s|_{\Omega}\in C^{\gamma}(\overline\Omega)\quad \textrm{for}\ \gamma<s,\qquad |\nabla u|\leq Cd^{s-1}\ \, \textrm{in}\ \,\Omega,\]
and the following identity holds
\begin{equation}\label{Poh1}
\int_\Omega\biggl\{ (x\cdot\nabla u)Lu+\frac{n-2s}{2}\,u\,Lu\biggl\}\,dx=-\frac{\Gamma(1+s)^2}{2}\int_{\partial\Omega}\mathcal A(\nu)\left(\frac{u}{d^s}\right)^2(x\cdot\nu)d\sigma.
\end{equation}
Moreover, for all $e\in \R^n$, we have
\begin{equation}\label{Poh2}
\int_\Omega \partial_eu\,Lu\,dx=-\frac{\Gamma(1+s)^2}{2}\int_{\partial\Omega}\mathcal A(\nu)\left(\frac{u}{d^{s}}\right)^2(\nu\cdot e)\,d\sigma.
\end{equation}
Here, $\nu$ is the unit outward normal to $\partial\Omega$ at $x$,
\begin{equation}\label{A}
\mathcal A(\nu)=c_s\int_{S^{n-1}}|\nu\cdot\theta|^{2s}a(\theta)d\theta,
\end{equation}
and $c_s$ is a constant that depends only on $s$.
\end{thm}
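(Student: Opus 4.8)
The plan is to follow the strategy of \cite{RS-Poh} for the fractional Laplacian, adapting it to handle the anisotropy, which is the source of the extra factor $\mathcal A(\nu)$ in the boundary term. The first step is the regularity statement: $u/d^s \in C^\gamma(\overline\Omega)$ and $|\nabla u| \le C d^{s-1}$. Under \eqref{A1} this should follow from known boundary-regularity results for translation-invariant stable-type operators with bounded spectral density (the $C^{1,1}$ regularity of $\partial\Omega$ combined with ellipticity \eqref{ellipt}); under \eqref{A2}, since $\mu$ may be singular, one needs the convexity of $\Omega$ to get interior regularity (as the Introduction already signals), and then one bootstraps to the boundary. These are essentially citations plus standard bootstrap, so I would dispatch them quickly and concentrate on the identity.

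For \eqref{Poh1}, I would first reduce to a \emph{rescaling} computation. The key observation, exactly as in the fractional Laplacian case, is that the left-hand side of \eqref{Poh1} is, up to sign, $\frac{d}{d\lambda}\big|_{\lambda=1} \int_{\R^n} u_\lambda\, Lu$ where $u_\lambda(x) = u(\lambda x)$, because $x\cdot\nabla u = \frac{d}{d\lambda}\big|_{\lambda=1} u_\lambda$ and the $\frac{n-2s}{2}$ term accounts for the scaling of the measure $dx$ together with the order $2s$ of $L$. Writing the quadratic form $\langle u, Lu\rangle$ via the bilinear form associated to the kernel and using the change of variables $x\mapsto \lambda x$, the derivative in $\lambda$ picks up only boundary contributions because $u$ is smooth in the interior and vanishes outside $\Omega$; all the bulk terms cancel by the scaling invariance of $L$. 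So the whole content is to evaluate
\[
-\frac{1}{2}\frac{d}{d\lambda}\bigg|_{\lambda=1^+}\int_{\R^n}\int_{\R^n}\frac{\bigl(u(x)-u(y)\bigr)\bigl(u_\lambda(x)-u_\lambda(y)\bigr)}{|x-y|^{n+2s}}\,\tilde a\!\left(\tfrac{x-y}{|x-y|}\right)dx\,dy
\]
(with $\tilde a$ the symmetrized density, and the analogous spectral expression in case \eqref{A2}), and show it equals the claimed boundary integral.

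The main obstacle — and the genuinely new part compared to \cite{RS-Poh} — is the boundary analysis of this limit in the anisotropic setting. Near a boundary point $x_0$ with outer normal $\nu$, one blows up: $u$ behaves like $\Gamma(1+s)^{-1}\,\frac{u}{d^s}(x_0)\,(d(x))_+^s$ times (approximately) a one-dimensional $s$-harmonic profile in the normal direction. For the isotropic fractional Laplacian this profile is exactly $t_+^s$ and the constant is explicit; here the operator $L$ acting on a function depending only on $x\cdot\nu$ reduces to a \emph{one-dimensional} stable operator whose order is still $2s$ but whose constant is $\mathcal A(\nu)=c_s\int_{S^{n-1}}|\nu\cdot\theta|^{2s}a(\theta)\,d\theta$ (resp.\ $c_s\int |\nu\cdot\sigma|^{2s}d\mu$), precisely because integrating the kernel against a direction-$\nu$ slice produces the $|\nu\cdot\theta|^{2s}$ weight. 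So the local boundary term inherits this factor $\mathcal A(\nu)$ and the universal constant $\tfrac{\Gamma(1+s)^2}{2}$ from the 1-D computation. The technical work is to justify the blow-up rigorously: flatten $\partial\Omega$ using $C^{1,1}$ regularity, use $u/d^s\in C^\gamma$ to control the error, use $|\nabla u|\le Cd^{s-1}$ and the decay of the kernel to dominate the integrals uniformly in $\lambda\to1^+$, and pass to the limit. The convexity in \eqref{A2} is what guarantees $d^{s-1}$-type interior bounds when $\mu$ is singular, so the same scheme applies.

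Finally, \eqref{Poh2} follows from \eqref{Poh1} by a standard translation argument: apply \eqref{Poh1} to the domain $\Omega$ and compare with a translated version, or more directly, differentiate the identity obtained by replacing $x$ with $x - te$ and letting $t\to 0$; the bulk term $\frac{n-2s}{2}u\,Lu$ is translation-invariant and drops out, leaving $\int_\Omega \partial_e u\, Lu$ on the left and the $(\nu\cdot e)$-weighted boundary integral on the right. I would present \eqref{Poh1} in full and then obtain \eqref{Poh2} as a short corollary of this translation invariance. The hard part, to reiterate, is the uniform control of the near-boundary integrals in the blow-up limit and the identification of the one-dimensional constant $\mathcal A(\nu)$; once those are in place, the algebra of the scaling identity is routine.
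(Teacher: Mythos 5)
Your skeleton matches the paper's at the top level: the scaling identity $\int_\Omega(x\cdot\nabla u)\,Lu\,dx=\frac{d}{d\lambda}\big|_{\lambda=1^+}\int_{\R^n}u_\lambda\,Lu\,dx$, the observation that this derivative is a pure boundary effect, a one-dimensional reduction in the normal direction producing $\mathcal A(\nu)$, and the two-origins trick for \eqref{Poh2}. But there are two genuine gaps. First, the evaluation of the boundary contribution --- which is the entire content of the theorem --- is not actually carried out. The paper does \emph{not} blow up the Dirichlet form of $L$ directly; it writes $\int u_\lambda Lu=\lambda^{(2s-n)/2}\int w_{\sqrt\lambda}w_{1/\sqrt\lambda}$ with $w=L^{1/2}u$, proves that $L^{1/2}$ has a kernel $b(y/|y|)|y|^{-n-s}$ with $b\in C^\infty(S^{n-1})$ whose one-dimensional symbol is $c\sqrt{\mathcal A(\nu)}$ (Lemma \ref{lem-Fourier}, which requires first approximating $a$ by smooth densities as in Section \ref{approx}), establishes the expansion $L^{1/2}u(x)=c_1\{\log^-|x-x_0|+c_2\chi_\Omega(x)\}\sqrt{\mathcal A(\nu(x_0))}\,(u/d^s)(x_0)+h(x)$ near $\partial\Omega$ (Theorem \ref{thlaps/2}), and then invokes the delicate one-dimensional formula $-\frac{d}{d\lambda}\big|_{\lambda=1^+}\int_0^\infty\varphi(\lambda t)\varphi(t/\lambda)\,dt=A^2\pi^2+B^2$ for functions with a logarithmic singularity (Proposition \ref{propoperador}); the factor $\mathcal A(\nu)$ in the boundary term arises as $\bigl(\sqrt{\mathcal A(\nu)}\bigr)^2$. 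Your direct blow-up of the double integral would need an equally precise substitute for all of this, including the exact constant $\Gamma(1+s)^2/2$ and uniform-in-$\lambda$ error control; saying that the bulk terms cancel by scaling invariance is correct in spirit but leaves the whole computation undone.

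Second, the scaling identity you start from is only justified for domains that are star-shaped with respect to the origin: one needs $u_\lambda\equiv0$ outside $\Omega$ for $\lambda>1$ (plus the bounds $|Lu|\leq C$ and $|\nabla u|\leq Cd^{s-1}$) to differentiate under the integral sign, and the subsequent ray-by-ray analysis parametrizes $\R^n$ by $(t,x_0)\in(0,\infty)\times\partial\Omega$. For general $C^{1,1}$ domains the paper needs a separate localization step (Section \ref{sec8}): a bilinear version of the identity, a partition of unity splitting $u$ into pieces supported where $\Omega\cap B$ is star-shaped, Lemma \ref{unabola} for pieces in a common ball, and Lemma \ref{duesboles} for pieces with disjoint supports. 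Your proposal does not address this at all. The regularity statements and the derivation of \eqref{Poh2} from \eqref{Poh1} are handled essentially as in the paper.
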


It is important to notice that $\mathcal A(\xi)$ is the Fourier symbol of the operator \eqref{L}.
(In probability, it is usually called the characteristic exponent of the L\'evy process.)
We think it is an interesting fact that the Fourier symbol $\mathcal A$ appears in these new identities, even though nothing is stated in terms of frequency variables.

In this direction, G. Grubb has recently found a new proof of our Pohozaev identities; see \cite{Grubb-Poh}.
The proofs of Grubb use Fourier transform methods, and are completely different from ours presented above.
A key ingredient in those proofs is the existence of an appropriate factorization of the principal symbol of the operator, in the spirit of her previous works \cite{Grubb,Grubb2}.

The proofs in \cite{Grubb-Poh} are done by flattening the boundary of $\Omega$, and apply as well to $x$-dependent operators.
We think it is important to remark that, even in the case of flat boundary, with these Fourier transform methods the treatment of anisotropic operators $L$ is significantly more delicate than the case $L=(-\Delta)^s$.

The constant $c_s$ in \eqref{A} is given by
\[c_s=\frac{\pi}{\sin(\pi s)\Gamma(1+2s)}.\]
This can be checked by recalling that when $L=(-\Delta)^s$ then $\mathcal A\equiv1$; see~\cite{RS-Poh}.

We recall that the first identity of this type (with a local boundary term) was established by the first two authors in \cite{RS-Poh} in the case of the isotropic fractional Laplacian.
Later, N. Abatangelo \cite{Abatangelo} obtained very related identities involving ``large solutions'' for the fractional Laplacian $(-\Delta)^s$, i.e., solutions that blow up at the boundary of the domain.

As said before, problems of the form \eqref{eq} have a clear probabilistic interpretation, in which $f(x,u)$ can be viewed as a running cost.
Informally speaking, $u(x)$ is the expected cost for a particle that moves randomly, following a L\'evy process starting at $x\in\Omega$.
However, we do not know any probabilistic interpretation of our identities.

As a consequence of Theorem \ref{thpoh} we have the following.

\begin{cor}\label{corpoh}
Let $s\in(0,1)$, and assume that $L$ and $\Omega$ satisfy either \eqref{A1} or \eqref{A2}.
Let $f$ be a locally Lipschitz function, and $u$ be any bounded solution of
\begin{equation}\label{eq2}
\left\{ \begin{array}{rcll}
L u &=&f(u)&\textrm{in }\Omega \\
u&=&0&\textrm{in }\R^n\backslash\Omega,
\end{array}\right.
\end{equation}
Then, the following identity holds
\[\int_{\Omega}\biggl\{2nF(u)-(n-2s)u\,f(u)\biggr\}dx=\Gamma(1+s)^2\int_{\partial\Omega}\mathcal A(\nu)\left(\frac{u}{d^s}\right)^2(x\cdot\nu)d\sigma,\]
where $F(t)=\int_0^tf$, $\nu$ is the unit outward normal to $\partial\Omega$ at $x$, and $\mathcal A$ is given by \eqref{A}.
\end{cor}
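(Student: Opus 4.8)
The plan is to derive Corollary \ref{corpoh} directly from Theorem \ref{thpoh}, which is essentially a bookkeeping exercise once the two identities \eqref{Poh1} and \eqref{Poh2} are in hand. First I would substitute $f(u)$ for $f(x,u)$ in \eqref{eq}, so that $u$ solves \eqref{eq2} and $Lu=f(u)$ in $\Omega$. The key observation is that $x\cdot\nabla u = \sum_i x_i\partial_i u$, so that by the chain rule $(x\cdot\nabla u)\,f(u) = x\cdot\nabla\bigl(F(u)\bigr)$ pointwise in $\Omega$, where $F(t)=\int_0^t f$. This is the step where I would want to invoke the regularity statement in Theorem \ref{thpoh}, namely $|\nabla u|\le C d^{s-1}$ in $\Omega$, together with the boundedness of $u$ and local Lipschitz continuity of $f$, to guarantee that $F(u)\in W^{1,1}(\Omega)$ and that $F(u)$ vanishes on $\partial\Omega$ in the appropriate (trace) sense — recall $u=0$ outside $\Omega$ and $F(0)=0$, so $F(u)=O(d^s)$ near $\partial\Omega$. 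This justifies the integration by parts
\[
\int_\Omega x\cdot\nabla\bigl(F(u)\bigr)\,dx = -n\int_\Omega F(u)\,dx + \int_{\partial\Omega}(x\cdot\nu)\,F(u)\,d\sigma = -n\int_\Omega F(u)\,dx,
\]
the boundary term vanishing because $F(u)|_{\partial\Omega}=0$.

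Next I would plug $Lu=f(u)$ into the left-hand side of \eqref{Poh1}. The first term becomes $\int_\Omega (x\cdot\nabla u)\,f(u)\,dx = \int_\Omega x\cdot\nabla(F(u))\,dx = -n\int_\Omega F(u)\,dx$ by the computation above, and the second term is $\frac{n-2s}{2}\int_\Omega u\,f(u)\,dx$. Hence \eqref{Poh1} reads
\[
-n\int_\Omega F(u)\,dx + \frac{n-2s}{2}\int_\Omega u\,f(u)\,dx = -\frac{\Gamma(1+s)^2}{2}\int_{\partial\Omega}\mathcal A(\nu)\left(\frac{u}{d^s}\right)^2 (x\cdot\nu)\,d\sigma.
\]
Multiplying both sides by $-2$ gives exactly the claimed identity
\[
\int_\Omega\bigl\{2nF(u)-(n-2s)u\,f(u)\bigr\}\,dx = \Gamma(1+s)^2\int_{\partial\Omega}\mathcal A(\nu)\left(\frac{u}{d^s}\right)^2(x\cdot\nu)\,d\sigma.
\]

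I do not expect any serious obstacle here; the only point requiring a little care is the integration-by-parts step $\int_\Omega x\cdot\nabla(F(u)) = -n\int_\Omega F(u) + \int_{\partial\Omega}(x\cdot\nu)F(u)$. One must check that $F(u)$ is regular enough up to the boundary for the divergence theorem to apply and that the boundary contribution genuinely vanishes. This follows from the regularity asserted in Theorem \ref{thpoh}: since $u/d^s\in C^\gamma(\overline\Omega)$ one has $|u|\le C d^s$ near $\partial\Omega$, and $f$ locally Lipschitz with $F(0)=0$ yields $|F(u)|\le C|u|\le C d^s\to 0$ at $\partial\Omega$; combined with $|\nabla(F(u))| = |f(u)||\nabla u|\le C d^{s-1}\in L^1(\Omega)$, the function $F(u)$ belongs to $W^{1,1}(\Omega)\cap C(\overline\Omega)$ with zero boundary trace, so the boundary term drops out. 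Note also that \eqref{Poh2} is not needed for this corollary — only \eqref{Poh1} is used — though \eqref{Poh2} would give the analogous identities for each partial derivative if one wished. This completes the proof.
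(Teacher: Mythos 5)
Your proposal is correct and is precisely the argument the paper has in mind: the paper's proof of Corollary \ref{corpoh} simply says the result follows immediately from the first identity in Theorem \ref{thpoh}, i.e.\ substituting $Lu=f(u)$ into \eqref{Poh1}, writing $(x\cdot\nabla u)f(u)=x\cdot\nabla F(u)$, integrating by parts with a vanishing boundary term, and multiplying by $-2$. Your added justification of the integration by parts via $|u|\le Cd^s$ and $|\nabla u|\le Cd^{s-1}$ is exactly the right supporting detail.
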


Note that the quantity $u/d^s|_{\partial\Omega}$ plays the role that the normal derivative plays in second order PDEs.
This fact is also observed in the Serrin's problem for the fractional Laplacian \cite{DG-V,FJ,soave}.

A consequence of Corollary \ref{corpoh} is the nonexistence of positive solutions to \eqref{eq2}-\eqref{A1} in star-shaped domains
for the critical nonlinearity $f(u)=u^{\frac{n+2s}{n-2s}}$; see Section~\ref{sec9}.

Finally, as in \cite{RS-Poh}, another consequence of Theorem \ref{thpoh} is the following integration by parts formula.

\begin{cor}\label{corintparts}
Let $s\in(0,1)$, and assume that $L$ and $\Omega$ satisfy either \eqref{A1} or \eqref{A2}.

Let $u$ and $v$ be two functions satisfying the hypotheses of Theorem \ref{thpoh} -- with possibly different nonlinearities $f(x,u)$ and $g(x,v)$.

Then, the following identity holds for $i=1,...,n$
\[\int_\Omega Lu\ v_{x_i}\,dx=-\int_\Omega u_{x_i}\,Lv\,dx-\Gamma(1+s)^2\int_{\partial\Omega}\mathcal A(\nu)\frac{u}{d^{s}}\frac{v}{d^{s}}\nu_i\,d\sigma.\]
Here, $\nu$ is the unit outward normal to $\partial\Omega$ at $x$, and $\mathcal A$ is given by \eqref{A}.
\end{cor}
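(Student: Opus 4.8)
The plan is to derive Corollary~\ref{corintparts} from the two identities in Theorem~\ref{thpoh} by a polarization argument. First I would fix the index $i$ and set $e=e_i$, the $i$-th coordinate vector, in \eqref{Poh2}. Applying \eqref{Poh2} to $u$ (with nonlinearity $f$) gives $\int_\Omega \partial_{x_i}u\,Lu\,dx=-\frac{\Gamma(1+s)^2}{2}\int_{\partial\Omega}\mathcal A(\nu)(u/d^s)^2\nu_i\,d\sigma$, and applying it to $v$ (with nonlinearity $g$) gives the analogous formula with $u$ replaced by $v$. The point of the polarization is to obtain the bilinear version $\int_\Omega(\partial_{x_i}u\,Lv+\partial_{x_i}v\,Lu)\,dx=-\Gamma(1+s)^2\int_{\partial\Omega}\mathcal A(\nu)\frac{u}{d^s}\frac{v}{d^s}\nu_i\,d\sigma$, which is exactly the symmetrized statement of the Corollary. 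Rearranging this gives $\int_\Omega Lu\,v_{x_i}\,dx=-\int_\Omega u_{x_i}Lv\,dx-\Gamma(1+s)^2\int_{\partial\Omega}\mathcal A(\nu)\frac{u}{d^s}\frac{v}{d^s}\nu_i\,d\sigma$, as claimed.

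To carry out the polarization rigorously I would apply \eqref{Poh2} not to $u$ and $v$ separately but to the sum $w=u+v$. The function $w$ solves $Lw=f(x,u)+g(x,v)$ in $\Omega$ with $w=0$ in $\R^n\setminus\Omega$; since $f$ and $g$ are locally Lipschitz and $u,v$ are bounded, the right-hand side is a bounded function that is locally Lipschitz in the relevant variables once restricted to the bounded range of $(u,v)$, so $w$ satisfies the hypotheses of Theorem~\ref{thpoh} — here one should note that \eqref{Poh2} only requires $u$ to be a bounded solution of an equation of the form \eqref{eq}, and the regularity conclusions $u/d^s|_\Omega\in C^\gamma(\overline\Omega)$ and $|\nabla u|\le Cd^{s-1}$ hold for $w$ as well. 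Then \eqref{Poh2} for $w$ reads
\[
\int_\Omega \partial_{x_i}w\,Lw\,dx=-\frac{\Gamma(1+s)^2}{2}\int_{\partial\Omega}\mathcal A(\nu)\left(\frac{w}{d^s}\right)^2\nu_i\,d\sigma.
\]
Expanding $\partial_{x_i}w\,Lw=\partial_{x_i}u\,Lu+\partial_{x_i}v\,Lv+\partial_{x_i}u\,Lv+\partial_{x_i}v\,Lu$ on the left, and $(w/d^s)^2=(u/d^s)^2+(v/d^s)^2+2\frac{u}{d^s}\frac{v}{d^s}$ on the right, and subtracting the two identities obtained by applying \eqref{Poh2} to $u$ and to $v$, all the "diagonal" terms cancel and one is left precisely with the cross terms, yielding the bilinear identity above. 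I would then rename $v_{x_i}=\partial_{x_i}v$ and reorganize to match the stated form.

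The main obstacle I anticipate is purely a matter of justifying that each integral appearing in the computation is absolutely convergent, so that the additive splitting of both sides is legitimate; this is where the regularity estimates in Theorem~\ref{thpoh} are essential. Near $\partial\Omega$ one has $|\nabla u|\le Cd^{s-1}$ and, since $u/d^s$ is bounded, $|Lu|=|f(x,u)|\le C$, so the integrand $\nabla u\cdot Lu$ on the left is controlled by $Cd^{s-1}$, which is integrable on $\Omega$ because $s-1>-1$; the same bound applies to $v$, $w$, and all cross terms, so the left-hand side splits. On the boundary, $u/d^s$ and $v/d^s$ extend continuously to $\overline\Omega$ and $\mathcal A(\nu)$ is bounded (by \eqref{A} and \eqref{ellipt}), so the boundary integrals are all finite and the quadratic expansion of $(w/d^s)^2$ is valid pointwise on $\partial\Omega$ and then integrated. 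Once these convergence points are in place, the Corollary follows immediately from linearity; no new analytic input beyond Theorem~\ref{thpoh} is required.
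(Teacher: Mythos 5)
Your polarization argument is essentially the paper's own proof: the paper applies the directional identity to $w=u+v$ and $w=u-v$ and subtracts, while you use $w=u+v$, $w=u$, and $w=v$; the two polarizations are equivalent. The one point that needs care is your claim that $w=u+v$ "satisfies the hypotheses of Theorem \ref{thpoh}". Strictly speaking it does not: $Lw=f(x,u(x))+g(x,v(x))$ is not of the form $h(x,w)$ for a locally Lipschitz $h$, and as a function of $x$ alone the right-hand side is only H\"older continuous up to $\partial\Omega$ (since $u,v\in C^s$ but not Lipschitz there), so $w$ is not literally a bounded solution of a semilinear problem of the form \eqref{eq}. The correct route --- and the one the paper takes --- is to note that \eqref{Poh2} is a consequence of the linear statement, Proposition \ref{intparts} (applied with two different origins), whose hypotheses are only that $Lw\in L^\infty(\Omega)$ and $|\nabla Lw|\le Cd^{-s-1}$; these are readily verified for $w=u\pm v$ from the boundedness of $u,v$, the local Lipschitz continuity of $f,g$, and the gradient bounds $|\nabla u|,|\nabla v|\le Cd^{s-1}$. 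With that substitution your argument is complete; the integrability discussion at the end is correct but is not where the real issue lies.
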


To establish Theorem \ref{thpoh} we have to extend the method in \cite{RS-Poh} for the fractional Laplacian to more general operators \eqref{L}.
In the case $L=(-\Delta)^s$ an important ingredient of the proof in \cite{RS-Poh} was the precise behavior of $(-\Delta)^{s/2}u(x)$ for $x$ near~$\partial\Omega$.

Here, we consider the operator $L^{1/2}$ and we study the singular behavior of the function $L^{1/2}u$ near $\partial\Omega$.
This requires very fine regularity estimates for $u$, $u/d^s$, and $L^{1/2}u(x)$ near the boundary.
Some of these estimates were already established in \cite{RS-stable} and \cite{RV}, while some other estimates are developed in the present paper.

The kernel of the operator $L^{1/2}$ has an explicit expression in case $L=(-\Delta)^s$, but not for general stable operators.
Because of this, the proofs of our Pohozaev identities are simpler for $L=(-\Delta)^s$, and new ideas are required to treat the general case, in which we obtain the extra factor $\sqrt{\mathcal A(\nu(z))}$.

\subsection{Some ingredients of the proof}

As said above, the proof of Theorem \ref{thpoh} follows the strategy in \cite{RS-Poh}.
However, the extension from $(-\Delta)^s$ to more general nonlocal operators \eqref{L} requires new ideas and presents some
interesting mathematical questions, as explained in more detail at the end of this Introduction.

An important ingredient in the proofs of the present paper is the regularity up to the boundary of the quotient $u/d^s$, stated next.
When the spectral measure $a$ and the domain $\Omega$ are $C^\infty$, this is a particular case of the results of G. Grubb \cite{Grubb,Grubb2}.
For non-regular spectral measures and $C^{1,1}$ domains, the regularity of $u/d^s$ was recently established in \cite{RS-stable}.
This result reads as follows.

\begin{thm}[\cite{RS-stable}]\label{krylov}
Let $\Omega$ be any bounded and $C^{1,1}$ domain.
Let $L$ be any operator of the form \eqref{L-singular}-\eqref{ellipt-singular}, and $u\in H^s(\R^n)$ be the solution of $Lu=g$ in $\Omega$, $u=0$ in $\R^n\setminus\Omega$, with $g\in L^\infty(\Omega)$.

Then, $u/d^s$ is H\"older continuous up to the boundary $\partial\Omega$, and
\[\|u/d^s\|_{C^{\gamma}(\overline\Omega)}\le C\|g\|_{L^{\infty}(\Omega)}\qquad \textrm{for all}\ \  \gamma<s.\]
The constant $C$ depends only on $\Omega$, $s$, $\gamma$, and the ellipticity constants.
\end{thm}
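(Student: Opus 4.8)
The plan is to deduce Theorem~\ref{krylov} from a one--term expansion of $u$ at each boundary point. Concretely, I would show that for every $z\in\partial\Omega$ there is a real number $Q(z)$ with
\begin{equation}\label{planexp}
\bigl\|u-Q(z)\,d^s\bigr\|_{L^\infty(\Omega\cap B_r(z))}\le C\,\|g\|_{L^\infty(\Omega)}\,r^{s+\gamma},\qquad r\in(0,1),
\end{equation}
together with $|Q(z_1)-Q(z_2)|\le C\|g\|_{L^\infty}|z_1-z_2|^\gamma$, the constant $C$ depending only on $n$, $s$, $\gamma$, $\Omega$ and the ellipticity constants in \eqref{ellipt-singular}. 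Granting \eqref{planexp}, the quotient $u/d^s$ extends continuously to $\overline\Omega$ with boundary values $Q$, and the interior $C^\gamma$ estimate for $u/d^s$ on compact subsets of $\Omega$ --- which follows from interior regularity for $Lu=g$ and the smoothness of $d^s$ near $\partial\Omega$ --- can be patched with \eqref{planexp} along a chain of balls to give $\|u/d^s\|_{C^\gamma(\overline\Omega)}\le C\|g\|_{L^\infty}$.

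Three preliminary ingredients are needed. First, the optimal growth bound $|u|\le C\|g\|_{L^\infty}d^s$ in $\Omega$, obtained by comparison with barriers of the form $c(1-|x|^2)_+^s$ rescaled into the interior and exterior tangent balls at each boundary point, which exist because $\Omega$ is $C^{1,1}$. Second, scale--invariant interior estimates: for $x_0\in\Omega$ with $\rho=d(x_0)$ one has $\rho^{\beta}[u]_{C^{\beta}(B_{\rho/2}(x_0))}\le C\rho^{s}\|g\|_{L^\infty}$ for a suitable $\beta\in(0,2s)$, a consequence of known interior regularity for operators of the form \eqref{L-singular}-\eqref{ellipt-singular} together with the first ingredient and the decay of the tail $\int_{\R^n}|u(y)|(1+|y|)^{-n-2s}dy$. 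Third, the model solution: for \emph{any} spectral measure, $L[(x_n)_+^s]=0$ in $\{x_n>0\}$, which follows by integrating out all coordinates except the one along $\theta\in S^{n-1}$ and invoking the one--dimensional identity $(-\Delta)^s(t_+^s)=0$ on $(0,\infty)$.

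I would prove \eqref{planexp} by a dyadic iteration combined with a compactness argument. Fixing $z=0$ with inner unit normal $e_n$, the induction produces coefficients $Q_k$ with $|Q_{k+1}-Q_k|\le C2^{-k\gamma}\|g\|_{L^\infty}$ and $\|u-Q_kd^s\|_{L^\infty(\Omega\cap B_{2^{-k}})}\le C2^{-k(s+\gamma)}\|g\|_{L^\infty}$. If the inductive step failed there would be solutions $u_j$ of $Lu_j=g_j$, radii $r_j\to0$, constants $Q_{k_j}$, and a number $\theta_j$ --- the least constant for which the corresponding estimate fails --- such that the rescalings $v_j(x)=\theta_j^{-1}\bigl(u_j-Q_{k_j}d^s\bigr)(r_jx)$ have unit size on the relevant ball and satisfy $\sup_{B_R}|v_j|\le CR^{s+\gamma}$ for $R\ge1$. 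Using the interior estimates one extracts a locally uniform limit $v$; since $\gamma<s$, the factor $r_j^{2s}\theta_j^{-1}r_j^{-(s+\gamma)}\to0$, so the right--hand side drops out in the limit, the $C^{1,1}$ domain flattens to $\{x_n>0\}$, and the correction $Q_{k_j}d^s$ is controlled because $L(d^s)$ is bounded in a neighbourhood of $\partial\Omega$ and $d^s$ rescales to $(x_n)_+^s$. Thus $v$ solves $Lv=0$ in $\{x_n>0\}$, $v=0$ in $\{x_n\le0\}$, with $|v(x)|\le C(1+|x|)^{s+\gamma}$ and $s+\gamma<2s$.

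The main obstacle is the Liouville classification: every such $v$ equals $Q(x_n)_+^s$ for a constant $Q$. I would prove it by tangential difference quotients: for $e'\perp e_n$ the functions $w_h(x)=\bigl(v(x+he')-v(x)\bigr)/|h|$ again solve $Lw_h=0$ in $\{x_n>0\}$, vanish in the complement, and --- using the interior estimates and a covering argument to bound them both far from and close to $\{x_n=0\}$ --- grow at most like $(1+|x|)^{s+\gamma-1}$, an exponent strictly below $s$ because $\gamma<1$; the base--case half--space Liouville theorem (nontrivial solutions vanishing outside $\{x_n>0\}$ grow at least like $d^s$, by the boundary Harnack principle for these operators) then forces $w_h\equiv0$. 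Hence $v$ depends only on $x_n$, and the one--dimensional equation together with $v=0$ for $x_n\le0$ and growth below $2s$ leaves only $v=Q(x_n)_+^s$. Since this $v$ is a multiple of the model profile --- exactly the component the normalization of $v_j$ was designed to exclude --- one reaches a contradiction, closing the induction and establishing \eqref{planexp}; the uniformity of the constants in $z$ and the patching with the interior estimate are then routine.
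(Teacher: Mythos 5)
You should first be aware that the paper contains no proof of Theorem~\ref{krylov}: it is imported verbatim from \cite{RS-stable}, so the only comparison available is with the argument in that reference (and its precursor \cite{RS-K}). Your strategy --- growth bound $|u|\le C\|g\|_{L^\infty}d^s$ by barriers, scale-invariant interior estimates, the exact solution $(x\cdot\nu)_+^s$ in the half-space, a dyadic/compactness proof of the expansion $|u-Q(z)d^s|\le Cr^{s+\gamma}$, and a half-space Liouville classification proved by tangential increments --- is exactly the strategy of those papers, so at the level of architecture your proposal is the intended one.

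Two steps, however, would not survive as written. First, in the Liouville theorem you divide tangential increments by $|h|$ and claim the growth $(1+|x|)^{s+\gamma-1}$. Near $\{x_n=0\}$ the solution $v$ is a priori only $C^s$ (indeed $|\nabla v|\sim x_n^{s-1}$ for the model profile), so first difference quotients normalized by $|h|^{1}$ cannot be bounded up to the boundary without already knowing tangential Lipschitz regularity of $v/x_n^s$ --- which is essentially the conclusion. The correct version normalizes by $|h|^{s}$, using the scale-invariant boundary estimate $|v|\le C\,x_n^s\,R^{\gamma}$ on $B_R$ (itself a consequence of the barriers), so that $w_h/|h|^s$ grows like $(1+|x|)^{\gamma}$ with $\gamma<s$, and one then concludes $w_h\equiv0$. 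Second, the base case ``solutions vanishing outside $\{x_n>0\}$ with growth strictly below $d^s$ are zero'' should not be attributed to a boundary Harnack principle: for operators \eqref{L-singular}--\eqref{ellipt-singular} with singular spectral measure even the interior Harnack inequality may fail, and \cite{RS-stable} deliberately avoids it; the statement follows instead from the same barrier estimate by letting $R\to\infty$. Two smaller inaccuracies: $(1-|x|^2)_+^s$ is an exact supersolution only for $(-\Delta)^s$ --- for anisotropic $L$ one must use the corrected barriers of \cite{RS-K}; and $L(d^s)$ is in general not bounded near $\partial\Omega$ but only $O(d^{-\epsilon})$ (cf.\ Lemma~\ref{Lds}), which still suffices for your blow-up since it is multiplied by $\theta_j^{-1}r_j^{2s}\to0$.
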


Recall that for more general integro-differential operators of order $2s$, solutions $u$ may not be comparable to $d^s$ near the boundary of $\Omega$.
For example, it is
shown in \cite{RS-K} that fully nonlinear equations with respect to the class $\mathcal L_0$ (or even to $\mathcal L_1$ and $\mathcal L_2$) fail to have this property; see Section 2 in \cite{RS-K} for more details.

We will also need the following result, established recently in \cite{RV}, and which deals with the interior regularity of solutions.

\begin{thm}[\cite{RV}]\label{thm-RV}
Let $L$ and $\Omega$ satisfy either \eqref{A1} or \eqref{A2}.
Let $u\in H^s(\R^n)$ be the solution of $Lu=g$ in $\Omega$, $u=0$ in $\R^n\setminus\Omega$.
Assume that $g\in L^\infty(\Omega)$ and that $|\nabla g|\leq Cd^{-s-1}$ in $\Omega$.

Then, $u$ is $C^{1+2s-\epsilon}_{\rm loc}(\Omega)$ for all $\epsilon>0$, with the estimate
\[[u]_{C^{s+\beta}(\{{\rm dist}(x,\partial\Omega)>\rho\})}\le C \rho^{-\beta}\qquad \textrm{for all}\ \  \rho\in(0,1),\]
for all $\beta\in[0,1+s)$.
\end{thm}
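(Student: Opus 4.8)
The final statement is Theorem~\ref{thm-RV}, the interior regularity result from \cite{RV}. Since this is quoted verbatim as an external input, the "proof" here should be understood as a plan to reconstruct the argument from \cite{RV}; let me sketch how I would attack it.

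\medskip

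The plan is to bootstrap from the boundary regularity of Theorem~\ref{krylov} together with interior Schauder-type estimates for translation-invariant stable operators. First I would record the baseline: since $g\in L^\infty(\Omega)$, Theorem~\ref{krylov} gives $u/d^s\in C^\gamma(\overline\Omega)$, hence $u\in C^s(\R^n)$ and, on the region $\{d(x)>\rho\}$, interior estimates for $L$ (of the form in \cite{RS-Dir,RS-stable}) upgrade this to $u\in C^{s+\beta}$ on $\{d>\rho\}$ with the scaling $[u]_{C^{s+\beta}(\{d>2\rho\})}\le C\rho^{-\beta}$ for $\beta<s$, just by rescaling the equation $L u = g$ in balls of radius comparable to $\rho$ and using $\|g\|_{L^\infty}$. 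The content is to push $\beta$ past $s$, up to $1+s-\epsilon$, and for that one cannot only use $\|g\|_{L^\infty}$: one needs the hypothesis $|\nabla g|\le C d^{-s-1}$ to control a first incremental quotient of the right-hand side. The natural device is to differentiate the equation: for a unit vector $e$, the incremental quotient $u_h^e(x) = \bigl(u(x+he)-u(x)\bigr)/|h|^{s}$ (or a genuine derivative $\partial_e u$, once it is known to exist locally) solves $L(\partial_e u) = \partial_e g$ in the interior, and one applies the previously obtained $C^{s+\beta'}$-type estimate to $\partial_e u$ with the bound on $\partial_e g$, reading off that $u\in C^{1+s+\beta'-s}=C^{1+\beta'}$ locally; iterating / optimizing the exponents and tracking the $\rho$-powers through the rescaling yields $u\in C^{1+2s-\epsilon}_{\rm loc}$ with the stated estimate for all $\beta\in[0,1+s)$. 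Throughout one must be careful that $L$ is only comparable to $(-\Delta)^s$ in the sense of the ellipticity constants \eqref{ellipt-singular}, so all interior estimates used must be the ones valid for the whole anisotropic class, not just the fractional Laplacian; this is exactly why \eqref{ellipt-singular} (and, under \eqref{A2}, convexity of $\Omega$, which enters to guarantee the global $C^s$ bound and the boundary behavior needed to start the bootstrap for the singular operators \eqref{L-singular}) is assumed.

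\medskip

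The key steps in order: (1) from Theorem~\ref{krylov}, deduce $u\in C^s(\R^n)$ and a first-stage interior estimate $[u]_{C^{s+\beta}(\{d>\rho\})}\le C\rho^{-\beta}$ for $\beta<s$, using only $\|g\|_{L^\infty}$, via rescaling and known interior Hölder estimates for the class \eqref{L-singular}; (2) justify that $u$ is differentiable in the interior and that $\partial_e u$ solves $L(\partial_e u)=\partial_e g$ in $\Omega$, with $|\partial_e g|$ and, more to the point, the relevant Hölder seminorm of $\partial_e g$ controlled by $d^{-s-1}$ coming from the hypothesis $|\nabla g|\le Cd^{-s-1}$; (3) apply the stage-(1) estimate to $\partial_e u$ in place of $u$ on slightly smaller interior regions, obtaining $[\partial_e u]_{C^{s+\beta}(\{d>2\rho\})}\le C\rho^{-\beta-1}$, i.e. $u\in C^{1+s+\beta}$ with the right scaling; (4) bootstrap: since now $u$ is more regular, redo the interior estimate with a better starting exponent, and iterate finitely many times, optimizing to reach any $\beta<1+s$, equivalently $u\in C^{1+2s-\epsilon}_{\rm loc}(\Omega)$; (5) collect the $\rho$-dependence across the iteration to get $[u]_{C^{s+\beta}(\{d>\rho\})}\le C\rho^{-\beta}$ for all $\beta\in[0,1+s)$.

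\medskip

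The main obstacle is step (2)–(3): making the differentiated equation rigorous and, crucially, controlling the right-hand side. One must show $\partial_e u$ exists in the classical sense inside $\Omega$ and satisfies $L(\partial_e u)=\partial_e g$ — this needs the stage-(1) regularity plus a care​ful limiting argument with incremental quotients, since $u$ is merely $C^s$ globally and the nonlocal operator $L$ sees the (non-smooth) behavior of $u$ all the way up to $\partial\Omega$; one has to check that the tail contributions from near the boundary are harmless when testing against the smooth interior. Equally delicate is that the hypothesis is only $|\nabla g|\le Cd^{-s-1}$, a pointwise bound that blows up at $\partial\Omega$: to feed it into a Schauder estimate for $\partial_e u$ one works on the region $\{d>\rho\}$ where it gives $|\nabla g|\le C\rho^{-s-1}$, and the whole point of the sharp $\rho$-powers in the conclusion is to make this localization lossless. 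Balancing these scales — the loss $\rho^{-1}$ per derivative against the gain from the improved interior regularity — is where the exponent $1+2s-\epsilon$ (rather than something larger) comes from, and getting that bookkeeping exactly right is the technical heart of the proof.
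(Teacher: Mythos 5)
First, a point of order: the paper itself contains no proof of Theorem~\ref{thm-RV}; it is imported verbatim from \cite{RV}, so your reconstruction can only be measured against that reference, not against anything in this text. Your skeleton --- global $C^s$ regularity from Theorem~\ref{krylov}, a first interior estimate using only $\|g\|_{L^\infty(\Omega)}$, then a bootstrap through the differentiated (or incrementally quotiented) equation $L(\partial_e u)=\partial_e g$ fed by the hypothesis $|\nabla g|\le Cd^{-s-1}$ --- is the right general shape. But step (3) fails as written. The interior estimate you set up in step (1) (essentially Lemma~\ref{2s-gain-1}) carries a nonlocal tail term of the form $\sup_{R\ge 1}R^{\epsilon-2s}\|w\|_{L^\infty(B_R)}$. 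For $w=u$ this is controlled by the global $C^s$ seminorm, but for $w=\partial_e u$ it is not finite in any useful sense once $B_R$ reaches $\partial\Omega$: all one knows is $|\partial_e u|\le Cd^{s-1}$, which blows up at the boundary, and the same happens for incremental quotients of order $\sigma>s$. So you cannot simply ``apply the stage-(1) estimate to $\partial_e u$ on slightly smaller interior regions''; the tail must be measured in a weighted $L^1$ sense against the kernel.

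The missing ingredient is precisely the estimate $\int_{\R^n\setminus B_r}d(x+y)^{s-1}K(y)\,dy\le C/r$ (this is the Lemma~4.2 of \cite{RV} invoked in Section~\ref{secs/2} of the present paper), and this is where the structural hypotheses enter --- not, as you suggest, in ``the global $C^s$ bound and the boundary behavior needed to start the bootstrap'', which Theorem~\ref{krylov} supplies for \emph{every} $C^{1,1}$ domain with no convexity. Under \eqref{A1} the kernel is bounded by $\Lambda|y|^{-n-2s}$ and the weighted tail integral is a direct computation. Under \eqref{A2} the operator \eqref{L-singular} only integrates along rays, so the tail reduces to one-dimensional integrals $\int d(x+r\theta)^{s-1}|r|^{-1-2s}\,dr$ for each fixed direction $\theta$, with no averaging over directions to help; convexity of $\Omega$ guarantees that along any such ray the set $\{x+r\theta\in\Omega\}$ is a single interval on which $d$ is concave and degenerates only linearly at the endpoints, which makes each one-dimensional integral finite with the right power of the distance from $x$ to $\partial\Omega$. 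In a nonconvex domain a ray can graze $\partial\Omega$ tangentially and the integral degenerates --- and the counterexample quoted right after the theorem (a nonconvex $C^\infty$ domain and a singular spectral measure for which the solution is not even $C^{0,1}_{\rm loc}$) shows that no soft version of your bootstrap can close. Any correct proof must use \eqref{A1} or \eqref{A2} quantitatively at exactly the step your sketch leaves as something ``to check''.
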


Moreover, we showed in \cite{RV} that there exists a \emph{nonconvex} $C^\infty$ domain and an operator \eqref{L-singular}-\eqref{ellipt-singular} for which the solution of \eqref{eq} with $f\equiv1$ is \emph{not} $C^{0,1}_{\rm loc}(\Omega)$.
In particular, and somewhat surprisingly, the statement of Theorem~\ref{thm-RV} becomes false when both conditions~\eqref{A1} and~\eqref{A2} are dropped.
This is the essential reason for which we assume~\eqref{A1} or~\eqref{A2} in the present paper.

\begin{rem}
The ellipticity assumption in \eqref{ellipt} looks at first glance different from the one in \cite{RS-stable,RV} (which is the one in \eqref{ellipt-singular}).
However, for spectral functions $a\in L^\infty(S^{n-1})$ these two ellipticity assumptions are equivalent, and hence we can apply the results of \cite{RS-stable} and \cite{RV}.
\end{rem}

In our setting, Theorem \ref{thpoh} will follow from Proposition \ref{intparts} below.

\begin{prop}\label{intparts}
Let $L$ and $\Omega$ satisfy either \eqref{A1} or \eqref{A2}.
Let $u\in H^s(\R^n)$ be the solution of $Lu=g$ in $\Omega$, $u=0$ in $\R^n\setminus\Omega$.
Assume that $g\in L^\infty(\Omega)$, and that $|\nabla g|\leq Cd^{-s-1}$ in $\Omega$.

Then, $u/d^s$ is H\"older continuous up to the boundary, $|\nabla u|\leq Cd^{s-1}$ in $\Omega$, and the following identity holds
\[\int_\Omega(x\cdot\nabla u)Lu\ dx=\frac{2s-n}{2}\int_{\Omega}u\,Lu\, dx-\frac{c_s}{2}\int_{\partial\Omega}\mathcal A(\nu)\left(\frac{u}{d^s}\right)^2(x\cdot\nu)d\sigma.\]
Here, $\nu$ is the unit outward normal to $\partial\Omega$ at $x$, and $\mathcal A$ is given by \eqref{A}.
\end{prop}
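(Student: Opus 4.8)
The plan is to reduce the claimed identity to the fractional-Laplacian-type computation of \cite{RS-Poh} by factoring the operator as $L=L^{1/2}\circ L^{1/2}$, where $L^{1/2}$ is the symmetric operator of order $s$ whose Fourier symbol is $\sqrt{\mathcal A(\xi)}$. First I would set $w=L^{1/2}u$, so that $Lu=L^{1/2}w=g$, and rewrite both sides of the target identity purely in terms of $w$ and $u$ using the self-adjointness of $L^{1/2}$ on $\R^n$: for instance $\int_\Omega u\,Lu = \int_{\R^n} u\,L^{1/2}w = \int_{\R^n} w^2$, and similarly $\int_\Omega (x\cdot\nabla u)\,Lu$ becomes $\int_{\R^n} (x\cdot\nabla u)\,L^{1/2}w$, which after an integration by parts in the $x$-variable (legitimate once we know $|\nabla u|\le Cd^{s-1}$ and control $w$) and the commutator identity $L^{1/2}(x\cdot\nabla w)=x\cdot\nabla(L^{1/2}w)+2s\,L^{1/2}w$ turns into $-\tfrac12\int_{\R^n}(x\cdot\nabla w)w\,dx$ plus lower-order terms — exactly the structure appearing in \cite{RS-Poh}. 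The whole identity then collapses to showing
\[
-\int_{\R^n} (x\cdot\nabla w)\,w\,dx \;=\; \tfrac{n}{2}\int_{\R^n} w^2\,dx \;-\;\tfrac{c_s}{2}\int_{\partial\Omega}\mathcal A(\nu)\Bigl(\frac{u}{d^s}\Bigr)^2 (x\cdot\nu)\,d\sigma,
\]
i.e. the boundary term is the "anomaly" coming from the fact that $w$ is not in $H^1$ up to $\partial\Omega$ but behaves like a fixed singular profile there.

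The heart of the argument, therefore, is the fine boundary analysis of $w=L^{1/2}u$: I would prove that near a boundary point $z\in\partial\Omega$ one has the expansion
\[
w(x)=L^{1/2}u(x) \;\approx\; c\,\sqrt{\mathcal A(\nu(z))}\;\frac{u}{d^s}(z)\;\log\frac{1}{d(x)} \;+\; O(1),
\]
uniformly, with an appropriate Hölder modulus — this is the analogue of the statement in \cite{RS-Poh} that $(-\Delta)^{s/2}u$ has a logarithmic singularity at $\partial\Omega$ whose coefficient is $u/d^s$. This is where the new factor $\sqrt{\mathcal A(\nu)}$ enters: for $L=(-\Delta)^s$ the kernel of $L^{1/2}$ is explicit and the constant is dimension-free, whereas for a general stable operator I would compute the leading singular contribution by freezing the kernel $a(\theta)/|y|^{n+2s}$, reducing to a one-dimensional computation in the normal direction $\nu(z)$, and recognizing the resulting constant as $c_s\int_{S^{n-1}}|\nu\cdot\theta|^{2s}a(\theta)\,d\theta=\mathcal A(\nu)$ (using $a$ even and the one-dimensional identity $\int_0^\infty(2-e^{ir}-e^{-ir})\,dr/r^{1+2s}$ type formula that produces $c_s$). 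The required inputs — $u\in C^s$ up to the boundary with $u/d^s\in C^\gamma(\overline\Omega)$ (Theorem \ref{krylov}), the interior estimate $[u]_{C^{s+\beta}(\{d>\rho\})}\le C\rho^{-\beta}$ and hence $|\nabla u|\le Cd^{s-1}$ (Theorem \ref{thm-RV}, which is exactly why we need hypothesis \eqref{A1} or \eqref{A2} and $|\nabla g|\le Cd^{-s-1}$) — are quoted; from them I would derive matching estimates for $w$ and for $x\cdot\nabla w$ on dyadic annuli around $\partial\Omega$.

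With the boundary expansion in hand, the final step is a cutoff/limiting argument: truncate away a neighborhood $\{d>\varepsilon\}$, where the identity $-\int(x\cdot\nabla w)w=\tfrac n2\int w^2$ holds exactly (genuine integration by parts, since $w\in H^1_{\rm loc}$), then let $\varepsilon\to0$ and track the boundary layer $\{d\sim\varepsilon\}$; the divergent-looking pieces from the $\log$ singularity cancel in pairs (this is the mechanism of \cite{RS-Poh}), and what survives is precisely $-\tfrac{c_s}{2}\int_{\partial\Omega}\mathcal A(\nu)(u/d^s)^2(x\cdot\nu)\,d\sigma$, the factor $c_s$ being produced by the one-dimensional integral and $\mathcal A(\nu)$ by $(\sqrt{\mathcal A(\nu)})^2$. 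I expect the main obstacle to be establishing the boundary expansion for $w$ with a uniform and quantitative error term in the anisotropic case: without an explicit kernel for $L^{1/2}$ one must estimate it via its symbol $\sqrt{\mathcal A(\xi)}$, control the non-smoothness of $\mathcal A$ coming from $a\in L^1$ only (or, under \eqref{ellipt}, $a\in L^\infty$), and show that the contributions of the kernel far from the singularity and of the tangential directions are genuinely lower order — the convexity assumption in \eqref{A2} is what rescues the singular-measure case by supplying the interior regularity that makes $x\cdot\nabla w$ integrable against $w$ on each annulus.
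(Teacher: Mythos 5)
Your overall skeleton is the paper's: factor $L=L^{1/2}\circ L^{1/2}$, set $w=L^{1/2}u$, use self-adjointness to write everything in terms of $w$, and extract the boundary term from the logarithmic singularity of $w$ near $\partial\Omega$ with coefficient $\sqrt{\mathcal A(\nu)}\,(u/d^s)$, which squares to $\mathcal A(\nu)(u/d^s)^2$. However, there are two genuine gaps at precisely the two hardest points. First, you never actually obtain the kernel of $L^{1/2}$: you propose to ``freeze the kernel $a(\theta)/|y|^{n+2s}$,'' but that is the kernel of $L$, not of $L^{1/2}$, and the boundary expansion of $w$ is governed by the latter. The paper resolves this by first approximating $a$ by smooth densities $a_k$ (so the whole identity is proved for $a\in C^\infty(S^{n-1})$ and then recovered by passing to the limit using the uniform estimates of Theorems \ref{krylov} and \ref{thm-RV}), and then invoking pseudodifferential calculus (Lemma \ref{lem-Fourier}) to show that the smooth, positive, $s$-homogeneous symbol $\sqrt{\mathcal A(\xi)}$ has a kernel $b(y/|y|)|y|^{-n-s}$ with $b\in C^\infty(S^{n-1})$ satisfying $\int_{S^{n-1}}|\nu\cdot\theta|^s b(\theta)\,d\theta=c\sqrt{\mathcal A(\nu)}$; this identity is exactly what produces the factor $\sqrt{\mathcal A(\nu)}$ when the computation is reduced to one dimension along $\nu$. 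Without this (or some substitute), your ``main obstacle'' remains unresolved rather than overcome.

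Second, the final limiting step as you describe it would fail. Near $\partial\Omega$ one has $w\sim\log d$ plus a jump $c_2\chi_\Omega$, so $(x\cdot\nabla w)\,w\sim d^{-1}\log d$ is not absolutely integrable and the quantity $\int_{\R^n}(x\cdot\nabla w)\,w$ does not exist as a Lebesgue integral; moreover a cutoff at $\{d>\varepsilon\}$ produces a surface term $\sim\int_{\{d=\varepsilon\}}w^2\sim\log^2\varepsilon$, which diverges, and there is no pairwise cancellation available within that single truncated identity. The paper avoids this by never integrating $(x\cdot\nabla w)w$ directly: it writes $\int_\Omega(x\cdot\nabla u)Lu$ as the one-sided derivative $\frac{d}{d\lambda}\big|_{\lambda=1^+}\lambda^{(2s-n)/2}\int w_{\sqrt\lambda}w_{1/\sqrt\lambda}$, in which the symmetrized product $w(\lambda y)w(y/\lambda)$ is integrable for each $\lambda$, then passes to polar-type coordinates along rays through the star-shape center and applies the delicate one-dimensional computation of Proposition \ref{propoperador} to the profile $A\log^-|t-1|+B\chi_{[0,1]}(t)+h(t)$, whose one-sided derivative yields exactly $A^2\pi^2+B^2$. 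This also explains two structural pieces absent from your plan: the reduction to strictly star-shaped domains (needed for the polar decomposition) followed by the partition-of-unity/bilinear-identity argument of Section \ref{sec8} for general $C^{1,1}$ domains, and the fact that the commutator constant for the order-$s$ operator $L^{1/2}$ is $s$, not $2s$.
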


The hypotheses of this Proposition will be satisfied for any solution to the semilinear elliptic equation \eqref{eq}.
Still, we expect solutions to other related equations, like $u_t+Lu=f(x,u)$, to satisfy the same hypotheses; see \cite{FR}.

\vspace{2mm}

The paper is organized as follows.
In Section \ref{approx} we show that it suffices to prove Proposition \ref{intparts} for $C^\infty$ spectral measures.
In Section \ref{Fourier} we give a description of the operator $L^{1/2}$.
In Section \ref{regv} we prove some interior regularity results for the quotient $u/d^s$, which are important in our proof of Proposition \ref{intparts}.
Then, in Section~\ref{secs/2} we study the singular behavior of the function $L^{1/2}u$ near the boundary $\partial\Omega$.
In Section~\ref{sec2} we give the proof of Proposition \ref{intparts} in the case of star-shaped domains.
In Section~\ref{sec8} we finish the proof of Proposition \ref{intparts} and we prove Theorem \ref{thpoh}.
Finally, in Section \ref{sec9} we give some applications of our identities.

Let us stress the main novelties of the present paper with respect to the results in \cite{RS-Poh}.
The contents of Sections 2 and 3 are new with respect to \cite{RS-Poh}, while the results of Section 4 are a modified (and simplified)
version of the corresponding ones in \cite{RS-Dir}.
The results in Sections 5 and 6 have been carefully adapted to the present case of anisotropic operators.
The analysis of these two Sections is more delicate than the one for $L=(-\Delta)^s$, and it is here where the new factor $\mathcal A(\nu)$ shows up in the boundary term of the identity.
The proofs of Section 7 are more similar to \cite{RS-Poh}.
Finally, the results in Section 8 are new even for the fractional Laplacian.

Throughout the paper we will skip the parts of the proofs that are more similar to the ones in \cite{RS-Poh}, to focus in the ones that present new mathematical ideas and/or difficulties.

\section{An approximation argument}
\label{approx}

The hypotheses of Proposition \ref{intparts} allow the spectral measures $a(\cdot)$ to be very irregular.
In this section we show that, by an approximation argument, it suffices to consider the case in which $a\in C^\infty(S^{n-1})$.

More precisely, in this Section we assume that the following result holds, and we prove that Proposition \ref{intparts} follows from it.

\begin{prop}\label{intpartsB}
Let $\Omega$ be any $C^{1,1}$ domain, and let $L$ be an operator of the form \eqref{A1}, with $a\in C^\infty(S^{n-1})$.
Let $u\in H^s(\R^n)$ be any function satisfying
\begin{itemize}
\item[(a)] $u=0$ in $\R^n\setminus\Omega$.
\item[(b)] For all $\beta\in[0,1+s)$ and all $\rho>0$, we have
\[[u]_{C^{s+\beta}(\{{\rm dist}(x,\partial\Omega)>\rho\})}\leq C\rho^{-\beta}.\]
\item[(c)] $Lu$ is bounded in $\Omega$.
\end{itemize}

Then, $u/d^s$ is H\"older continuous up to the boundary, and the identity \eqref{Poh1} holds.
\end{prop}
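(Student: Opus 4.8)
First, the regularity conclusion is immediate from the results recalled above. By (a) and (c), $u\in H^s(\R^n)$ is the solution of $Lu=g$ in $\Omega$, $u=0$ in $\R^n\setminus\Omega$, with $g:=Lu|_\Omega\in L^\infty(\Omega)$; since $a\in L^\infty(S^{n-1})$, assumption \eqref{ellipt} is equivalent to \eqref{ellipt-singular} (see the Remark above), so Theorem \ref{krylov} applies and gives $u/d^s\in C^\gamma(\overline\Omega)$ for all $\gamma<s$. The real content is the identity \eqref{Poh1}, and the plan is to follow the strategy of \cite{RS-Poh}: prove \eqref{Poh1} first when $\Omega$ is star-shaped (say, with respect to the origin), and then obtain the general $C^{1,1}$ case by localization. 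The new ingredient, forced by the anisotropy, is a careful study of the ``square-root operator'' $L^{1/2}$ --- the translation-invariant integro-differential operator of order $s$ with Fourier symbol $\mathcal A(\xi)^{1/2}$, so that $L^{1/2}\circ L^{1/2}=L$; because $a\in C^\infty(S^{n-1})$, this operator and its kernel admit a sufficiently precise description (Section \ref{Fourier}).

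Write $w:=L^{1/2}u$. Two facts about $w$ drive the argument. First, by the bilinear identity $\int_{\R^n}w^2\,dx=\langle L^{1/2}u,\,L^{1/2}u\rangle=\langle u,\,Lu\rangle=\int_\Omega u\,Lu\,dx$, which is finite. Second --- and this is the crux --- one needs the precise behavior of $w$ near $\partial\Omega$. The plan here is: using the interior estimates for $u/d^s$ of Section \ref{regv}, localize near an arbitrary boundary point $z$, flatten $\partial\Omega$ and freeze the coefficient $a$; then $u$ equals, to leading order, $(u/d^s)(z)$ times the one-dimensional model $(x\cdot\nu(z))_+^s$, and applying the corresponding constant-coefficient operator $L_0^{1/2}$ to this model reduces --- via a Fourier computation along the normal direction, which turns the symbol $\mathcal A_0$ restricted to the normal axis into $|\xi_n|^{2s}\mathcal A_0(\nu(z))$ --- to the one-dimensional operator $(-\Delta_{1d})^{s/2}$ applied to $t\mapsto(t)_+^s$, multiplied by $\mathcal A(\nu(z))^{1/2}$. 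This is exactly where the anisotropy enters: the value $\mathcal A(\nu)$ of the symbol of $L$ at the normal becomes $\mathcal A(\nu)^{1/2}$ for $L^{1/2}$. The outcome, to be established in Section \ref{secs/2}, is that $w$ is bounded and $C^\alpha$ on compact subsets of $\Omega$ and of $\R^n\setminus\overline\Omega$, with a logarithmic singularity at $\partial\Omega$:
\[
w(x)=\mathcal A\big(\nu(\bar x)\big)^{1/2}\,\Big(\frac{u}{d^s}\Big)(\bar x)\;\Psi_s\big(\delta(x)\big)+O\big(|\delta(x)|^\alpha\big)\qquad\text{near }\partial\Omega,
\]
where $\delta$ denotes the signed distance to $\partial\Omega$, $\bar x$ the nearest point of $\partial\Omega$, and $\Psi_s(t):=(-\Delta_{1d})^{s/2}\big[(\,\cdot\,)_+^s\big](t)$ is a fixed one-dimensional profile, which has a logarithmic singularity at $t=0$ with a $\Gamma(1+s)$-proportional coefficient.

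Granting this, the star-shaped case proceeds as in \cite{RS-Poh}. For $\lambda\ge1$ set $u_\lambda(x):=u(\lambda x)$, which is supported in $\lambda^{-1}\overline\Omega\subseteq\overline\Omega$; since $L^{1/2}$ is homogeneous of degree $s$, $L^{1/2}u_\lambda(x)=\lambda^s\,w(\lambda x)$, and hence
\[
I(\lambda):=\int_{\R^n}\big(L^{1/2}u_\lambda\big)\big(L^{1/2}u\big)\,dx=\lambda^s\int_{\R^n}w(\lambda x)\,w(x)\,dx=\int_{\R^n}u_\lambda\,Lu\,dx.
\]
The plan is to compute $\frac{d}{d\lambda}I(\lambda)\big|_{\lambda=1^+}$ in two ways. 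Differentiating the last expression: for $\lambda>1$ the function $u_\lambda$ is supported strictly inside $\Omega$, where $Lu$ is bounded by (c), while $|\nabla u|\le C d^{s-1}$ is integrable (by (b)); dominated convergence then gives $\frac{d}{d\lambda}I(\lambda)\big|_{\lambda=1^+}=\int_\Omega(x\cdot\nabla u)\,Lu\,dx$. Differentiating instead $\lambda^s\int w(\lambda x)w(x)\,dx$ requires care, because $w$ is not Lipschitz across $\partial\Omega$: splitting $\R^n$ into $\{|\delta|>\varepsilon\}$ and $\{|\delta|<\varepsilon\}$, on the bulk the scaling factor $\lambda^s$ together with $\int x\cdot\nabla(w^2)\,dx=-n\int w^2\,dx$ produces $\frac{2s-n}{2}\int w^2$, while on the boundary layer the singular set of $x\mapsto w(\lambda x)$ is $\lambda^{-1}\partial\Omega$, which crosses $\partial\Omega$ at normal speed $(x\cdot\nu)$; evaluating this contribution with the profile $\Psi_s$ and letting $\varepsilon\to0$ yields exactly the boundary term of \eqref{Poh1}, the factor $\mathcal A(\nu)(u/d^s)^2$ arising from squaring $\mathcal A(\nu)^{1/2}(u/d^s)$ in the expansion above, and the universal constant being pinned down by specializing to $L=(-\Delta)^s$ (where $\mathcal A\equiv1$) and invoking \cite{RS-Poh}. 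Equating the two computations and using $\int w^2=\int_\Omega u\,Lu$ gives \eqref{Poh1} for star-shaped $\Omega$.

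The step I expect to be the main obstacle is the boundary expansion of $w=L^{1/2}u$: obtaining sharp two-sided asymptotics, with the correct universal constant and with the anisotropy factor $\mathcal A(\nu)^{1/2}$, for a merely $C^{1,1}$ domain and for a kernel of $L^{1/2}$ that is only implicitly known --- this is precisely where the fine interior estimates of Section \ref{regv} and the description of $L^{1/2}$ of Section \ref{Fourier} are needed, and where the argument genuinely departs from the isotropic case $L=(-\Delta)^s$. Finally, to pass from star-shaped domains to general $C^{1,1}$ domains (Section \ref{sec8}), the plan is, following \cite{RS-Poh}, a localization and partition-of-unity argument: $\partial\Omega$ is covered by finitely many balls in each of which $\Omega$ is, in suitable coordinates, star-shaped, the local version of the scaling identity above is applied on each, and the pieces are recombined; the interior contributions are harmless since $|\nabla u|\le Cd^{s-1}$ and $\int_\Omega u\,Lu<\infty$. (The same scheme, with translations $u(x+te)$ in place of the dilations $u(\lambda x)$, produces the companion identity \eqref{Poh2}.) This completes the proof of Proposition \ref{intpartsB}.
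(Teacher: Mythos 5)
Your proposal is correct and follows essentially the same route as the paper: regularity from Theorem \ref{krylov}, the boundary expansion of $L^{1/2}u$ with the factor $\sqrt{\mathcal A(\nu)}$ obtained by reducing to the one-dimensional model along the normal (the paper's Theorem \ref{thlaps/2}, via Propositions \ref{proplaps2} and \ref{prop:Lap-s/2-delta-s}), the dilation argument $\frac{d}{d\lambda}\big|_{\lambda=1^+}\int_{\R^n} u_\lambda\,Lu\,dx$ for star-shaped domains, and a partition-of-unity/bilinear-identity argument for general $C^{1,1}$ domains. The only point where your sketch is looser than the paper is the evaluation of the boundary contribution to the $\lambda$-derivative: the paper symmetrizes to $\int w_{\sqrt\lambda}w_{1/\sqrt\lambda}$, passes to ray coordinates over $\partial\Omega$, and invokes the one-dimensional computation of Proposition \ref{propoperador} (imported from \cite{RS-Poh}), which is exactly the rigorous form of your bulk/boundary-layer heuristic.
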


Let us give next the proof of Proposition \ref{intparts}.
After this, the rest of the paper will consist essentially
on the proof of Proposition \ref{intpartsB}
(the proof of Proposition~\ref{intpartsB} will be completed
on Section~\ref{sec8} and this will at once also
give the proof of Proposition~\ref{intparts} and Theorem \ref{thpoh}).

\begin{proof}[Proof of Proposition \ref{intparts}]
Let $\Omega$ and $L$ satisfy either \eqref{A1} or \eqref{A2}, and let $u$ and $g$ be as in the statement of Proposition \ref{intparts}.

Let $a_k\in C^\infty(S^{n-1})$ be a sequence of nonnegative functions converging weakly towards the spectral measure of the operator $L$.
Let $L_k$ be the operator \eqref{L} whose spectral measure is $a_k$, and let $u_k$ be the solution of
\[\left\{ \begin{array}{rcll}
L_k u_k &=&g&\textrm{in }\Omega \\
u_k&=&0&\textrm{in }\R^n\backslash\Omega.
\end{array}\right.\]
Then, by Theorems \ref{krylov} and \ref{thm-RV}, we have
\[\|u_k\|_{C^s(\R^n)}\leq C,\qquad \|u_k/d^s\|_{C^\gamma(\overline\Omega)}\leq C,\qquad |\nabla u_k|\leq Cd^{s-1},\]
for some constant $C$ that depends on $g$, $n$, $\Omega$, and the ellipticity constants, but not on $k$.

Thus, up to a subsequence, the sequence $u_k$ converges uniformly to a function $w$ which satisfies $w\equiv0$ in $\R^n\setminus\Omega$,
\[\|w\|_{C^s(\R^n)}\leq C,\qquad \|w\|_{C^\gamma(\overline\Omega)}\leq C,\qquad |\nabla w|\leq Cd^{s-1}.\]
Furthermore, since the functions $u_k$ satisfy
\[[u_k]_{C^{s+\beta}(\{{\rm dist}(x,\partial\Omega)>\rho\})}\le C \rho^{-\beta}\qquad \textrm{for all}\ \  \rho\in(0,1),\]
for all $\beta\in[0,1+s)$, then the same bound holds for the function $w$.

This allows us to show that, for every $x\in \Omega$, $L_ku_k$ is defined pointwise, and
\[g(x)=L_ku_k(x)\longrightarrow Lw(x).\]
Hence, $Lw=g$ in $\Omega$.

But then, by uniqueness of the solution to $Lu=g$ in $\Omega$, $u=0$ in $\R^n$, we have that $u\equiv w$.

Finally, since each $u_k$ satisfy the hypotheses of Proposition \ref{intpartsB}, then we have that
\[\int_\Omega(x\cdot\nabla u_k)g\ dx=\frac{2s-n}{2}\int_{\Omega}u_k\,g\, dx-\frac{c_s}{2}\int_{\partial\Omega}\mathcal A(\nu)\left(\frac{u_k}{d^s}\right)^2(x\cdot\nu)d\sigma.\]
Thus, taking the limit $k\rightarrow\infty$ in the previous identity, we find \eqref{Poh1}, and thus we are done.
\end{proof}

\section{Fourier symbols and kernels}
\label{Fourier}

The proof of the Pohozaev identity \eqref{Poh1} follows the steps of the one for the fractional Laplacian $(-\Delta)^s$ in \cite{RS-Poh}.
In the proof of \cite{RS-Poh}, the function $(-\Delta)^{s/2}u$ played a very important role, and this role will be played here by the $L^{1/2}u$.

In order to establish fine estimates for this function $L^{1/2}u$, we will need the following result,
which states that the square root of~$L$ also possesses an associated
spectral measure.

\begin{lem}\label{lem-Fourier}
Let $s\in (0,1)$, and $L$ be an operator of the form \eqref{L}-\eqref{ellipt}, with $a\in C^\infty(S^{n-1})$.
Then, there exists $b\in C^\infty(S^{n-1})$ such that
\[L^{1/2}u(x)=\int_{\R^n}\bigl(u(x)-u(x+y)\bigr)\frac{b(y/|y|)}{|y|^{n+s}}\,dy.\]
Moreover, the function $b$ satisfies
\begin{equation}\label{B-symbol}
\int_{S^{n-1}}|\nu\cdot\theta|^s b(\theta)d\theta=c\left(\int_{S^{n-1}}|\nu\cdot\theta|^{2s}a(\theta)d\theta\right)^{1/2}
\end{equation}
for all $\nu\in S^{n-1}$, for some constant $c$.
\end{lem}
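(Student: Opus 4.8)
\textbf{Proof plan for Lemma \ref{lem-Fourier}.}

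The plan is to work on the Fourier side, where $L$ acts as multiplication by the symbol $\mathcal A(\xi)=\mathcal A(\xi/|\xi|)|\xi|^{2s}$, with $\mathcal A(\nu)=c_s\int_{S^{n-1}}|\nu\cdot\theta|^{2s}a(\theta)\,d\theta$ (up to the normalizing constant $c_s$ this is the standard computation of the Fourier multiplier of \eqref{L}). Consequently $L^{1/2}$ is the Fourier multiplier with symbol $\sqrt{\mathcal A(\xi/|\xi|)}\,|\xi|^{s}$. The homogeneity of degree $s$ is already visible; the content of the lemma is that the zero-homogeneous angular factor $m(\omega):=\sqrt{\mathcal A(\omega)}$, $\omega\in S^{n-1}$, is precisely the symbol of an operator of the stated integro-differential form, with a \emph{smooth, even, nonnegative} density $b$ on the sphere. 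So the first step is to record the symbol of a generic operator $u\mapsto \int_{\R^n}(u(x)-u(x+y))\,b(y/|y|)|y|^{-n-s}\,dy$: using that $b$ is even one symmetrizes to $\frac12\int (2u(x)-u(x+y)-u(x-y))\,b(y/|y|)|y|^{-n-s}\,dy$, and a one-dimensional computation along each ray gives the symbol $\frac{1}{2}\kappa_s\int_{S^{n-1}}|\xi\cdot\theta|^{s}\,b(\theta)\,d\theta$ for an explicit constant $\kappa_s$ depending only on $s$. Thus the lemma amounts to solving, for $b\in C^\infty(S^{n-1})$ even and nonnegative,
\[
\int_{S^{n-1}}|\nu\cdot\theta|^{s}\,b(\theta)\,d\theta \;=\; c\,\sqrt{\mathcal A(\nu)}\qquad\text{for all }\nu\in S^{n-1}.
\]

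The second and main step is the solvability and regularity of this equation. The left-hand side is the action on $b$ of the \emph{cosine (Funk--type) transform} $T_s:\ g\mapsto \int_{S^{n-1}}|\nu\cdot\theta|^{s}g(\theta)\,d\theta$ on the sphere. This operator is a classical object: it is a convolution operator on $S^{n-1}$ (invariant under $O(n)$), so it is diagonalized by spherical harmonics, acting as a scalar $\lambda_k(s)$ on the degree-$k$ block; the $\lambda_k(s)$ are given by explicit ratios of Gamma functions (these are the eigenvalues of the $|\nu\cdot\theta|^{s}$-kernel computed, e.g., via the Funk--Hecke formula), and — crucially — for $s\in(0,1)$ noninteger they are all \emph{nonzero} on even harmonics (they vanish only on odd harmonics, which is consistent since both sides are even functions of $\nu$). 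Hence $T_s$ is invertible on the space of even functions, with inverse given by dividing the $k$-th harmonic coefficient by $\lambda_k(s)$. Since $\mathcal A\in C^\infty(S^{n-1})$ — because $a\in C^\infty$ and $|\nu\cdot\theta|^{2s}$ integrated against a smooth density depends smoothly on $\nu$ (the kernel $|\nu\cdot\theta|^{2s}$ is locally integrable and its $\nu$-derivatives to all orders are controlled) — and since $\mathcal A\ge \lambda/c_s>0$ by the ellipticity assumption \eqref{ellipt}, the function $\sqrt{\mathcal A}$ is itself in $C^\infty(S^{n-1})$ and bounded below. One then \emph{defines} $b:=c\,T_s^{-1}(\sqrt{\mathcal A})$ for a suitable normalization constant $c$, i.e.\ divide the harmonic coefficients of $\sqrt{\mathcal A}$ by $\lambda_k(s)$. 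Smoothness of $b$ follows from the standard fact that $T_s^{-1}$ maps $C^\infty$ to $C^\infty$: the eigenvalues $\lambda_k(s)$ decay only polynomially in $k$ (like $k^{-n/2-s}$ up to constants), so dividing by them costs finitely many derivatives, and applied to a $C^\infty$ function one still lands in $C^\infty$ (the harmonic coefficients of $\sqrt{\mathcal A}$ decay faster than any polynomial). Evenness of $b$ is automatic. The identity \eqref{B-symbol} then holds by construction, after matching the constant $c$ with $\kappa_s$.

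The remaining, and most delicate, point is \textbf{positivity of $b$}, which is needed for the representation $L^{1/2}u=\int(u(x)-u(x+y))b(y/|y|)|y|^{-n-s}\,dy$ to be that of a genuine (sub-)generator and, more concretely, for the later fine boundary estimates to apply. Positivity does \emph{not} follow from diagonalization, since $T_s^{-1}$ is not positivity-preserving. Here the natural route is a probabilistic / subordination argument: $L^{1/2}$ is the generator of the process obtained from the $\alpha=2s$ stable process with generator $L$ by the independent $1/2$-stable subordinator, and subordinating a pure-jump stable process gives again a pure-jump process whose Lévy measure is explicitly the one obtained by integrating the original Lévy measure against the subordinator's transition density. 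Concretely, using $L^{1/2}=\frac{1}{\sqrt\pi}\int_0^\infty (I-e^{-tL})\,t^{-3/2}\,dt$ (the standard subordination formula for fractional powers), one computes the jump kernel of $L^{1/2}$ by integrating the (nonnegative) heat kernel $p_t$ of $L$ — i.e.\ $\int_0^\infty p_t(y)\,t^{-3/2}\,dt$ — which is manifestly nonnegative; then one checks this kernel is homogeneous of degree $-n-s$, necessarily of the form $b(y/|y|)|y|^{-n-s}$, and identifies $b$ with the (a priori sign-unknown) solution above by matching symbols — thereby proving $b\ge0$. One must finally rule out that $b$ vanishes on a set of positive measure, which follows from the lower ellipticity bound on $\mathcal A$ (it forces $T_s b$ to be bounded below, hence $b$ cannot vanish identically, and combined with smoothness and the subordination representation one gets $b>0$ except possibly on a negligible set, which suffices). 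The main obstacle is thus reconciling the two descriptions of $b$ — the spectral one giving smoothness and the subordination one giving positivity — cleanly enough to assert both $b\in C^\infty(S^{n-1})$ and $b\ge0$ simultaneously; everything else is the by-now-standard Funk--Hecke bookkeeping.
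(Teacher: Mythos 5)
Your construction of $b$ is, at its core, the same as the paper's: both identify the Fourier symbol of $L^{1/2}$ as $\mathcal B(\xi)=\sqrt{\mathcal A(\xi)}$, observe that it is homogeneous of degree $s$, positive and $C^\infty$ away from the origin (using \eqref{ellipt} and $a\in C^\infty$), and conclude that the associated kernel is homogeneous of degree $-n-s$ with a smooth, even angular density; the identity \eqref{B-symbol} is then exactly the statement that the symbol of the $b$-operator matches $c\sqrt{\mathcal A}$. Where you differ is in how you pass from symbol to kernel: the paper invokes the general symbol--kernel correspondence for smooth homogeneous symbols (Section 0.2 of Taylor's book, together with an explicit inversion formula $y^\alpha K(y)=c\int_{S^{n-1}}|y\cdot\theta|^{-s}D^\alpha\mathcal B(\theta)\,d\theta$), whereas you invert the cosine transform $T_s$ directly by Funk--Hecke diagonalization. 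Your route is more hands-on, and the facts you rely on (nonvanishing of the even-degree eigenvalues for $s\in(0,1)$, their polynomial decay, rapid decay of the spherical-harmonic coefficients of $\sqrt{\mathcal A}$) are correct and suffice to get $b\in C^\infty(S^{n-1})$ and even.

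The one real problem is your treatment of positivity. The lemma does not assert $b\ge 0$, and the paper explicitly remarks at the end of its proof that the positivity of $\mathcal B$ does \emph{not} yield the positivity of $b$ --- precisely because their argument (like your spectral one) carries no sign information. Nothing downstream requires $b\ge 0$ either: the later estimates (e.g.\ Lemmas \ref{lem-boundI2} and \ref{lem-s-derivatives}) use only a bound on $|b|$ and the smoothness of $b$ on the sphere. So the ``main obstacle'' you identify --- reconciling the spectral and subordination descriptions so as to assert $b\ge 0$ --- is an obstacle to a claim that is not being made, and leaving it unresolved creates no gap in the proof of the lemma. (Your Bochner-subordination sketch is indeed the standard way to see that $b\ge 0$ does in fact hold for these operators, but you should not present the lemma as incomplete without it.)
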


\begin{proof}
The Fourier symbol of $L$ is given by
\[ {\mathcal A}(\xi)=c\int_{S^{n-1}}|\xi\cdot\theta|^{2s}a(\theta)d\theta;\]
see for example \cite{ST}.
Thus, the Fourier symbol of $L^{1/2}$ is given by
\[ {\mathcal B}(\xi)=\left(c\int_{S^{n-1}}|\xi\cdot\theta|^{2s}a(\theta)d\theta\right)^{1/2}.\]
This symbol is homogeneous of degree $s$, and is positive and $C^\infty$ in $\R^n\setminus\{0\}$.
Hence, this means that the operator can be written as
\[L^{1/2}w(x)=\int_{\R^n}\bigl(u(x)-u(x+y)\bigr)K(y)dy,\]
for some kernel $K(y)$ homogeneous of degree $-n-s$, and such that $K\in C^\infty(\R^n\setminus\{0\})$; see for example Section~0.2 in \cite{TaylorBook}.

In other words, we may write $K$ as
\[K(y)=\frac{b(y/|y|)}{|y|^{n+s}},\]
with $b\in C^\infty(S^{n-1})$, as desired.

In fact, the function $b$ can be computed explicitly in terms of $\mathcal B$ by using that, for any $\alpha\in\mathbb N_\circ^n$ with $|\alpha|=n$, we have
\[y^\alpha K(y)=c\int_{S^{n-1}}|y\cdot\theta|^{-s} D^\alpha \mathcal B(\theta)d\theta.\]
for all $y\in \R^n$.

It is important to notice that since $\mathcal B$ is even then $b$ will be even, but that the positivity of $\mathcal B$ does not yield the positivity of $b$.
\end{proof}

\begin{rem}
We expect a similar result to hold not only for spectral measures $a\in C^\infty(S^{n-1})$, but also for $a\in L^\infty(S^{n-1})$ or for general measures $\mu$.
However, we do not need this here, since by the approximation argument in the previous Section we can assume from now on that $a\in C^\infty(S^{n-1})$.
\end{rem}

\section{Interior regularity for $u/d^s$}
\label{regv}

In this section we will obtain interior estimates for the quotient $u/d^s$, that is, Proposition \ref{intregquotient} below.
These estimates hold for all operators \eqref{L-singular}-\eqref{ellipt-singular} in any $C^{1,1}$ domain $\Omega$
(with no convexity assumption on the domain, with no regularity assumptions on the spectral measure).

Throughout this section, $L$ is any operator of the form \eqref{L-singular}-\eqref{ellipt-singular}.
Also, throughout this section, $d$ is a $C^{1,1}$ function that coincides with $\textrm{dist}(x,\R^n\setminus\Omega)$ in a neighborhood of $\partial\Omega$.
That is, $d$ is just the distance function but avoiding possible singularities inside $\Omega$.

As in \cite{RS-Dir}, the key idea to obtain these estimates is to use the following equation
\[Lv=\frac{1}{d^s}\bigl\{Lu-v\,Ld^s+I_L(v,d^s)\bigr\}\qquad \textrm{in}\quad \Omega,\]
where $v\in C^\gamma(\R^n)$ is an extension of $u/d^s|_\Omega$, with $\gamma\in(0,s)$, and
\begin{equation}\label{I_L}
I_L(w_1,w_2)=\int_{\R^n}\bigl(w_1(x)-w_1(x+y)\bigr)\bigl(w_2(x)-w_2(x+y)\bigr)\frac{a(y/|y|)}{|y|^{n+2s}}\,dy.
\end{equation}

The following is the main result of this section.

\begin{prop}\label{intregquotient}
Let $L$ and $\Omega$ be as in \eqref{A2}, and $u$ be such that $u\equiv0$ in $\R^n\setminus\Omega$ and $\|Lu\|_{L^\infty(\Omega)}\leq C$.
Then, for all $\gamma<s$ and for all $\beta<2s$
\[ [u/d^s]_{C^{\beta}(\{{\rm dist}(x,\partial\Omega)>\rho\})}\leq C\rho^{\gamma-\beta}\qquad \textrm{for all}\quad \rho\in(0,1),\]
where $C$ is a constant that do not depend on $\rho$.
\end{prop}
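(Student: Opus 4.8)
\textbf{Plan of proof for Proposition \ref{intregquotient}.}

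The strategy is the one indicated right before the statement: rather than estimating $u/d^s$ directly, work with an extension $v\in C^\gamma(\R^n)$ of $u/d^s|_\Omega$ (for a fixed $\gamma\in(0,s)$, whose existence is guaranteed by Theorem \ref{krylov}) and use the equation
\[
Lv=\frac{1}{d^s}\bigl\{Lu-v\,Ld^s+I_L(v,d^s)\bigr\}\qquad\textrm{in }\Omega .
\]
First I would reduce the global interior estimate to a scaled local one: it suffices to prove that for every ball $B_{\rho/2}(x_0)$ with $x_0\in\Omega$ and $\mathrm{dist}(x_0,\partial\Omega)>\rho$ one has $[u/d^s]_{C^\beta(B_{\rho/2}(x_0))}\le C\rho^{\gamma-\beta}$. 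By the usual rescaling $\tilde v(y)=v(x_0+\rho y)$, $\tilde d(y)=\rho^{-1}d(x_0+\rho y)$, the problem is transported to estimating $\tilde v$ on $B_{1/2}$, where $\tilde d$ is uniformly $C^{1,1}$ and bounded below by $1$ on $B_{3/4}$, and where $\tilde v$ has $C^\gamma$ seminorm bounded independently of $\rho$ thanks to Theorem \ref{krylov}; one then needs to check that each of the three terms on the right-hand side, after rescaling, has the right homogeneity.

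The core of the argument is a Schauder-type (interior $C^\beta\to C^{\beta+2s}$, with $\beta+2s<2s$ meaning we only gain up to just below $2s$) estimate applied to the equation for $v$ on $B_{3/4}$. The key points to verify are: (i) $Lu$ is bounded by hypothesis, so $d^{-s}Lu$ contributes a term controlled after the rescaling by $\rho^{-s}\cdot\rho^{2s}=\rho^s$, which is better than $\rho^{\gamma-\beta}$ since $\gamma-\beta$ can be taken arbitrarily close to $s$ but, more to the point, since we only need the estimate for $\beta<2s$; (ii) $Ld^s$ is bounded on compact subsets of $\Omega$ (indeed $d$ is $C^{1,1}$ and bounded below there, so $d^s$ is smooth locally and $Ld^s\in L^\infty_{\rm loc}$), hence $d^{-s}v\,Ld^s\in C^\gamma$ locally with the correct scaling; (iii) the bilinear term $I_L(v,d^s)$ — defined in \eqref{I_L} — is the delicate one: one has to show $d^{-s}I_L(v,d^s)$ is in $C^{\beta-?}$ with the right dependence on $\rho$. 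Here one uses that $v\in C^\gamma$ globally while $d^s$ is locally $C^{1,1-}$, and splits the integral defining $I_L$ into a region near $x$ (where the product of increments is integrable because $\gamma+1>2s$, using $|v(x)-v(x+y)|\lesssim|y|^\gamma$ and $|d^s(x)-d^s(x+y)|\lesssim|y|$) and a far region (where boundedness of $v$ and of $d^s$ on $\Omega$, together with the tail $|y|^{-n-2s}$, suffices). One then has to estimate the $C^\beta$-seminorm of this quantity, differentiating under the integral sign and again splitting near/far; this is essentially the computation carried out in \cite{RS-Dir}, which the text explicitly says is being reused in modified form.

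The main obstacle is step (iii): controlling the Hölder seminorm of $d^{-s}I_L(v,d^s)$ with the sharp power of $\rho$, because the bilinear term mixes the merely $C^\gamma$ regularity of $v$ with the $C^{1,1}$ regularity of $d^s$ and the order-$2s$ singularity of the kernel, and one must be careful that the gain of regularity for $v$ via the Schauder estimate (bootstrapping from $C^\gamma$ towards $C^{2s-\epsilon}$) is not destroyed when feeding $v$ back into $I_L$; in particular when $2s>1$ one needs the extra room coming from $d^s$ being $C^{1,1}$ (not just $C^1$) and from the fact that we never claim regularity of $v$ beyond $2s$. I would handle this by a finite iteration: starting from $v\in C^\gamma$, show $Lv\in C^{\gamma-s}$ locally (with scaling), deduce $v\in C^{\gamma+s}$ locally, and repeat until reaching any $\beta<2s$, checking at each step that $I_L(v,d^s)$ keeps the required regularity because $\min(\gamma+ks,\,1)+1>2s$ holds at every stage. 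Once the iteration closes, undoing the rescaling gives precisely $[u/d^s]_{C^\beta(\{\mathrm{dist}(x,\partial\Omega)>\rho\})}\le C\rho^{\gamma-\beta}$, which is the claim.
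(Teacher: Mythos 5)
Your overall strategy coincides with the paper's: work with a $C^\gamma(\R^n)$ extension $v$ of $u/d^s|_\Omega$, use the equation $Lv=d^{-s}\{Lu-v\,Ld^s+I_L(v,d^s)\}$, apply a rescaled interior Schauder estimate, and close with an absorption/bootstrap (the paper phrases the rescaling through the weighted norms $\|\cdot\|_{\beta;\Omega}^{(-\gamma)}$ and closes by interpolation, which is equivalent to your finite iteration). Your treatment of $d^{-s}Lu$ and your near/far splitting for $I_L(v,d^s)$ are also in the right spirit, modulo replacing the global bound $|d^s(x)-d^s(x+y)|\lesssim|y|$ by the correct weighted interior bound $\lesssim d^{s-1}|y|$ (or, as in the paper, by the $C^{\epsilon+s}$ interior seminorm of $d^s$).

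However, step (ii) contains a genuine gap. You justify the bound on $v\,Ld^s$ by saying that $d^s$ is smooth on compact subsets of $\Omega$, hence $Ld^s\in L^\infty_{\rm loc}$ "with the correct scaling". This locality argument is not valid for a nonlocal operator: $Ld^s(x)$ sees the values of $d^s$ all the way up to $\partial\Omega$, where $d^s$ is only $C^s$. If you carry out the obvious split (second-order Taylor for $|y|<d(x)/2$, global $C^s$ regularity for $|y|>d(x)/2$) you only get $|Ld^s(x)|\le Cd(x)^{-s}$. That is not enough: to have $\|d^{-s}v\,Ld^s\|_{0;U}^{(2s-\gamma)}\le C$, i.e.\ to obtain the stated rate $\rho^{\gamma-\beta}$ rather than the weaker $\rho^{-\beta}$, one needs $|Ld^s|\le Cd^{\gamma-s}$, a gain of a full factor $d^{\gamma}$ over the naive bound. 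This is exactly the content of Lemma \ref{Lds} in the paper, which proves $|Ld^s|\le Cd^{-\epsilon}$ for every $\epsilon>0$, and its proof is not a regularity argument but a cancellation argument: one subtracts the exact one-dimensional barrier $\varphi_{x_0}(x)=((x_0-x)\cdot\nu)_+^s$, which satisfies $L\varphi_{x_0}\equiv0$ because the homogeneity $s$ matches the order $2s$ of the operator, and then estimates $L(d^s-\varphi_{x_0})$ using that the difference is $O(|y|^{2s})$ at the relevant scales. Without this ingredient the whole proof only yields the trivial interior estimate, so you should make Lemma \ref{Lds} (or an equivalent barrier computation) an explicit step of your argument.
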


The proof of this result is a modified
(and even somehow simplified)
version of the one in \cite[Section 4]{RS-Dir}.

As said before, we need several lemmas to prove Proposition \ref{intregquotient}.
We start with the first one, which reads as follows.

\begin{lem}\label{Lds}
Let $\Omega$ be any $C^{1,1}$ bounded domain, $s\in (0,1)$, $L$ be given by \eqref{L}.
Then, for all $\epsilon>0$ there exists a constant $C$ such that
\[\|d^\epsilon\,Ld^s\|_{L^\infty(\Omega)}\leq C.\]
Moreover, the constant $C$ depends only on $n$, $s$, $\epsilon$, $\Lambda$, and $\Omega$.
\end{lem}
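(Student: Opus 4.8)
The plan is to estimate $Ld^s$ pointwise near $\partial\Omega$ by splitting the singular integral into a near-diagonal part, where we exploit the $C^{1,1}$-regularity of $d^s$ away from the boundary, and a far part, where we simply control the tail. Fix a point $x\in\Omega$ and set $\delta=d(x)$; the whole difficulty is for $\delta$ small, since in the interior $\{d\ge\delta_0\}$ the function $d^s$ is as smooth as $\Omega$ allows and $Ld^s$ is bounded there by classical estimates. So assume $\delta\le\delta_0$ for a small structural constant $\delta_0>0$. Write
\[
Ld^s(x)=\int_{\R^n}\bigl(2d^s(x)-d^s(x+y)-d^s(x-y)\bigr)\frac{a(y/|y|)}{|y|^{n+2s}}\,dy,
\]
and decompose the domain of integration into $B_{\delta/2}$, $B_{\delta_0}\setminus B_{\delta/2}$, and $\R^n\setminus B_{\delta_0}$.

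On $B_{\delta/2}$ the whole segment $[x-y,x+y]$ stays in $\{d\ge\delta/2\}$, where $d\in C^{1,1}$ and hence $d^s$ has bounded second derivatives controlled by $\delta^{s-2}$ (because $D^2 d^s\sim s(s-1)d^{s-2}|\nabla d|^2 + s d^{s-1}D^2 d$, and on $\{d\ge\delta/2\}$ this is $\lesssim \delta^{s-2}$). A second-order Taylor expansion then gives $|2d^s(x)-d^s(x+y)-d^s(x-y)|\le C\delta^{s-2}|y|^2$, so the contribution of $B_{\delta/2}$ is bounded, using $\int_{B_{\delta/2}}|y|^{2-n-2s}\,dy\sim \delta^{2-2s}$ (here we use $s<1$ so the exponent is positive) and the bound $\int_{S^{n-1}}a\le C$ coming from $0\le a\le\Lambda$, by $C\Lambda\,\delta^{s-2}\delta^{2-2s}=C\Lambda\,\delta^{-s}\le C\Lambda\,\delta^{-\epsilon}\delta^{\epsilon-s}$. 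Wait — this gives $\delta^{-s}$, which is worse than $\delta^{-\epsilon}$; the point is that this term must cancel. So instead of a full second-order expansion I would keep only first order on $B_{\delta/2}$: by evenness of $a$, the odd part $\tfrac12(d^s(x+y)-d^s(x-y))$ integrates to zero over the symmetric ball, so what remains is $d^s(x+y)+d^s(x-y)-2d^s(x)$ which is genuinely second order but with a gradient term that has been removed — still $O(\delta^{s-2}|y|^2)$. The resolution is that one does not need this piece to be bounded by a constant: one only needs $d^\epsilon Ld^s$ bounded, i.e. a bound $C\delta^{-\epsilon}$, and $\delta^{-s}$ is NOT $\le C\delta^{-\epsilon}$ for $\epsilon<s$. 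Hence the near-diagonal contribution must actually be handled more carefully, using the explicit leading behaviour $d^s(x)\approx$ (affine)$^s$, for which $L$ of the half-space profile $(x_n)_+^s$ is exactly $0$; subtracting this model solution is the standard device (as in \cite{RS-Dir, RS-stable}).

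Concretely, I would compare $d^s$ near $x$ with the function $\ell(z)=\big((x-z)\cdot\nu_{\bar x}\big)_+^s$, where $\bar x\in\partial\Omega$ is the closest boundary point to $x$ and $\nu_{\bar x}$ the inward normal, using that $L\ell\equiv 0$ in the half-space $\{(x-z)\cdot\nu_{\bar x}>0\}$ (a one-dimensional computation: $L$ applied to $(x_n)_+^s$ vanishes because $2s$ is the correct homogeneity and the $1$-D $2s$-stable operator kills $t_+^s$). Then $Ld^s(x)=L(d^s-\ell)(x)+L\ell(x)$; the second term is $0$ up to the error from $\ell$ not vanishing outside the half-space, which contributes only through $\R^n\setminus B_{\delta/2}$ and is $O(\delta^{-\epsilon})$ for the far tail. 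For the first term, the $C^{1,1}$ regularity of $\partial\Omega$ gives $|d^s(z)-\ell(z)|\le C|z-x|^{1+s}$ and $|D(d^s-\ell)(z)|\le C|z-x|^{s}$, $|D^2(d^s-\ell)(z)|\le C|z-x|^{s-1}$ on $B_{\delta/2}(x)$, so the integrand of $L(d^s-\ell)$ over $B_{\delta/2}$ is $O(|y|^{s-1}|y|^2/|y|^{n+2s})=O(|y|^{1-s-n})$, which is integrable near $0$ and gives a bounded contribution $\lesssim \delta^{1-s}\cdot\delta^{?}$ — again one has to check the powers, but now the gain $|z-x|^{1+s}$ versus $|z-x|^2$ is what makes it work, yielding a bound $C\delta^{-\epsilon}$ after absorbing into $d^{\epsilon}$; on $B_{\delta_0}\setminus B_{\delta/2}$ we use $|d^s-\ell|\le C\delta_0^{1+s}$ type bounds together with $\int_{|y|>\delta/2}|y|^{-n-2s}\,dy\sim\delta^{-2s}$, giving $C\delta^{1+s-2s}=C\delta^{1-s}$, bounded; and on $\R^n\setminus B_{\delta_0}$ the integrand is bounded by $C\|d^s\|_{L^\infty}|y|^{-n-2s}$ plus the $\ell$-tail, giving a constant. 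Collecting all pieces yields $|Ld^s(x)|\le C\delta^{-\epsilon}$, i.e. $\|d^\epsilon Ld^s\|_{L^\infty(\Omega)}\le C$, with $C=C(n,s,\epsilon,\Lambda,\Omega)$.

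The main obstacle is precisely the near-diagonal estimate: a naive Taylor expansion of $d^s$ produces the non-integrable-at-the-boundary rate $\delta^{-s}$, and to improve it to the required $\delta^{-\epsilon}$ for every $\epsilon>0$ one must subtract the half-space model $\ell=((x-z)\cdot\nu)_+^s$ for which $L\ell=0$ exactly, and then quantify — using only the $C^{1,1}$ character of $\partial\Omega$ — how well $d^s$ is approximated by $\ell$ near the closest boundary point (this is where the geometry of $\Omega$ enters, through curvature bounds on $\partial\Omega$). Everything else (the far tails, the interior region $\{d\ge\delta_0\}$, the dependence of constants on $\Lambda$ via $\int a\le C\Lambda$) is routine. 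Note also that it suffices to prove the bound for $L$ of the form \eqref{L} with bounded $a$, which is the stated hypothesis, so the spectral measure is genuinely a function and all the integrals above make literal sense.
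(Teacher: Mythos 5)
Your strategy is essentially the paper's: subtract the half-space barrier $\varphi_{x_0}(z)=\bigl((x_0-z)\cdot\nu\bigr)_+^s$ at the closest boundary point $x_0$, use that $L\varphi_{x_0}(x)=0$ (the one-dimensional computation, Lemma 2.1 in \cite{RS-K}), and estimate $Lw(x)$ for $w=d^s-\varphi_{x_0}$ by splitting the integral at the scales $d(x)$ and $O(1)$. Your near-diagonal piece (contribution $\sim \delta^{1-s}$, bounded) and your far tail (convergent because $|w(x+y)|\le C|y|^s$) match the paper's. Incidentally, once the barrier is subtracted there is no ``error from $\ell$ not vanishing outside the half-space'': $L\varphi_{x_0}(x)=0$ exactly for $x$ in the open half-space; the only price is that $w$ is not compactly supported, which is what the $|y|^s$ tail bound handles.

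The genuine gap is the intermediate annulus $B_{\delta_0}\setminus B_{\delta/2}$. You bound it by a sup estimate ``$|d^s-\ell|\le C\delta_0^{1+s}$'' times $\int_{|y|>\delta/2}|y|^{-n-2s}\,dy\sim\delta^{-2s}$ and announce $C\delta^{1-s}$; the arithmetic does not parse (that product is $C\delta^{-2s}$, not $C\delta^{1-s}$), and a sup bound is the wrong tool on a region where $|w(x+y)|$ grows with $|y|$. The usable bound there is $|w(x+y)|\le C|y|^{2s}$, coming from $|d(x+y)-((x_0-x-y)\cdot\nu)_+|\le C|y|^2$ together with $|a^s-b^s|\le|a-b|^s$ --- the mean-value bound $\xi^{s-1}|y|^2$ you invoke degenerates when the segment reaches $\partial\Omega$, which happens throughout this annulus. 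This yields $\int_{B_{\delta_0}\setminus B_{\delta/2}}|y|^{2s}|y|^{-n-2s}\,dy\sim\log(1/\delta)$, so this piece is \emph{not} bounded: it is precisely the logarithm that forces the weight $d^{\epsilon}$ in the statement, via $\log(1/\delta)\le C_\epsilon\,\delta^{-\epsilon}$. As written, your accounting mislocates the source of the $\epsilon$ (you attribute it to the near-diagonal term, which is in fact bounded after subtracting the barrier) and would ``prove'' that $Ld^s\in L^\infty(\Omega)$ with no weight, which is not what this argument gives.
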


\begin{proof}
Note that $d^s$ is $C^{1,1}$ inside $\Omega$, so we only need to prove that $|d^{\epsilon}(x)Ld^s(x)|\leq C$ for $x\in\Omega$ near $\partial\Omega$.

Let $x\in\Omega$, and let $x_0\in\partial\Omega$ be such that $|x-x_0|=d(x)$.
Let us consider the function $\varphi_{x_0}(x)=(-x\cdot\nu)_+^s$, where $\nu$ is the unit outward normal to $\partial\Omega$ at $x_0$.
It follows from Lemma 2.1 in \cite{RS-K} that
\[L\varphi_{x_0}(x)=0;\]
see Section 2 in \cite{RS-K} for more details.
Hence, we only have to prove that
\[Lw(x)\leq C_0d^{-\epsilon}(x),\]
where we have denoted $w=d^s-\varphi_{x_0}$.

Let $\rho=d(x)/2$.
Then, the function $w$ satisfies
\[|w(x+y)|\leq
\left\{\begin{array}{ll}
C\rho^{s-1}|y|^{2}\ &\textrm{for} \ y\in B_\rho,\\
C|y|^{2s}\ &\textrm{for} \ y\in B_1\setminus B_\rho,\\
C|y|^{s}\ &\textrm{for} \ y\in \R^n\setminus B_1.
\end{array}\right.\]
Therefore, we have that
\[\begin{split}
|Lw(x)|&\leq\int_{\R^n}\bigl|w(x)-w(x+y)\bigr|\frac{\Lambda}{|y|^{n+2s}}\,dy\\
&\leq  \Lambda\int_{B_\rho}\frac{\rho^{s-1}|y|^2}{|y|^{n+2s}}\,dy+
\Lambda\int_{B_1\setminus B_\rho}\frac{|y|^{2s}}{|y|^{n+2s}}\,dy+\Lambda\int_{\R^n\setminus B_1}\frac{|y|^s}{|y|^{n+2s}}\,dy\\
&\leq C\left(1+|\log \rho| \right)\\
&\leq Cd^{-\epsilon}(x),\end{split}\]
as desired.
\end{proof}

The next result is the analog of Corollary 2.5 in \cite{RS-Dir}, and can be found in \cite{RS-stable}.

\begin{lem}[\cite{RS-stable}]\label{2s-gain-1}
Let $L$ be given by \eqref{L}, and let $w\in C^\infty(\R^n)$.
Then, for all $\beta<2s$ and $\epsilon>0$,
\[\|w\|_{C^{\beta}(B_{1/2})}\leq C\left(\|Lw\|_{L^\infty(B_1)}+\|w\|_{L^\infty(B_1)}+\sup_{R\geq1}\left\{R^{\epsilon-2s}\|w\|_{L^\infty(B_R)}\right\}\right),\]
where $C$ is a constant depending only on $n$, $s$, $\beta$, $\epsilon$, $\lambda$, and $\Lambda$.
\end{lem}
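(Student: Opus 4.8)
The statement to prove is Lemma \ref{2s-gain-1}, which is an interior Schauder-type estimate for the operator $L$ of the form \eqref{L}. This is a standard interior regularity estimate for translation-invariant stable operators, and the plan is to reduce it to known elliptic estimates via a blow-up/compactness argument or, more directly, via a Bernstein-type incremental-quotient scheme.

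The plan is to argue by the now-classical interior regularity machinery for integro-differential equations. First I would normalize so that the right-hand side of the asserted inequality equals $1$; that is, assume $\|Lw\|_{L^\infty(B_1)}\le1$, $\|w\|_{L^\infty(B_1)}\le1$, and $R^{\epsilon-2s}\|w\|_{L^\infty(B_R)}\le1$ for all $R\ge1$, and show $\|w\|_{C^\beta(B_{1/2})}\le C$. The tail control $\sup_{R\ge1}R^{\epsilon-2s}\|w\|_{L^\infty(B_R)}\le1$ is exactly what is needed to make $Lw$ well-defined pointwise and to control the contribution of $w$ far from the origin, since $\int_{\R^n\setminus B_1}|w(x+y)|\,|y|^{-n-2s}\,a(y/|y|)\,dy$ is finite by splitting into dyadic annuli $B_{2^{k+1}}\setminus B_{2^k}$ and summing $2^{k(\epsilon-2s)}\cdot 2^{-k\cdot 2s}\cdot\Lambda\cdot 2^{kn}$ — wait, more carefully, $\|w\|_{L^\infty(B_{2^{k+1}})}\le 2^{(k+1)(2s-\epsilon)}$ and the kernel on the annulus is bounded by $\Lambda\, 2^{-k(n+2s)}$ with volume $\sim 2^{kn}$, giving a summable geometric series in $2^{-k\epsilon}$.

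With these normalizations, the heart of the matter is a local gain of $2s$ derivatives from $Lw\in L^\infty$. I would obtain this by the standard scheme: (i) prove the $C^{\alpha}$ estimate for small $\alpha>0$ using the Hölder estimate of Caffarelli–Silvestre type (or the De Giorgi–Nash–Moser iteration for such kernels), using that the ellipticity \eqref{ellipt} puts $L$ in the class $\mathcal L_0$ up to the integrability of $a$ — actually one must be careful, since $a\in L^1(S^{n-1})$ only, but under \eqref{ellipt} we have $a\in L^\infty$, which suffices; (ii) bootstrap: writing the equation for incremental quotients $w_h(x)=(w(x+h)-w(x))/|h|^{\alpha}$, which solve $Lw_h = (Lw)_h$, and iterating the Hölder estimate, one upgrades $C^\alpha$ to $C^{2\alpha}$, $C^{3\alpha}$, and so on, until reaching any $\beta<2s$; at the last step, when $\beta$ would exceed $s$, one uses that $Lw\in L^\infty$ directly gives $w\in C^{2s-\epsilon}$ locally by the sharp interior estimate. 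Alternatively — and this is probably cleanest for the paper — one simply cites that since $a\in C^\infty(S^{n-1})$ (which we may assume by the approximation argument of Section \ref{approx}, though the lemma is stated for general $w\in C^\infty$ and the constant must not depend on smoothness of $a$), the operator $L$ is a classical pseudodifferential operator of order $2s$ with symbol $\mathcal A(\xi)$ elliptic of order $2s$, and the estimate follows from standard $\Psi$DO theory; but to keep the constant depending only on $n,s,\beta,\epsilon,\lambda,\Lambda$ one really wants the PDE proof.

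The main obstacle is making the bootstrap step rigorous with constants independent of everything except the stated parameters, in particular handling the transition across the threshold $\beta=s$ (where incremental quotients of order approaching $s$ cease to be bounded functions and one must instead work with second-order incremental quotients $w(x+h)+w(x-h)-2w(x)$, which is natural given that $L$ itself is defined via such second differences) and across $\beta=2s-1$ if $2s>1$ (where one passes from Hölder control of $w$ to Hölder control of $\nabla w$). In the actual write-up I would reference \cite{RS-stable} for this precise statement — indeed the lemma is attributed there — and only sketch that it follows by combining the interior Hölder estimate with the scaling-covariance of the equation $L(w(r\,\cdot))=r^{2s}(Lw)(r\,\cdot)$ and a standard iteration on incremental quotients, exactly as in \cite[Section 4]{RS-Dir} or in the work of Caffarelli–Silvestre; I would not reproduce the iteration in detail, since the paper explicitly says it skips parts of proofs that are similar to existing ones. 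The one point worth spelling out is the tail estimate above, since the weighted-tail hypothesis $\sup_{R\ge1}R^{\epsilon-2s}\|w\|_{L^\infty(B_R)}$ is slightly nonstandard and is precisely what licenses applying the interior estimate to a function that need not decay.
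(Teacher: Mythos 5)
The paper offers no proof of this lemma at all: it is stated as a quoted result from \cite{RS-stable} (``the analog of Corollary 2.5 in \cite{RS-Dir}, and can be found in \cite{RS-stable}''), so your ultimate deferral to that reference is exactly what the paper does, and your preliminary tail computation (dyadic annuli, geometric series in $2^{-k\epsilon}$) correctly explains why the nonstandard weighted-tail quantity is the right hypothesis.

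One substantive caveat about the sketch you give in between: the route ``ellipticity \eqref{ellipt} puts $L$ in the class $\mathcal L_0$, so apply the Caffarelli--Silvestre H\"older estimate and bootstrap'' does not literally work. Condition \eqref{ellipt} only requires $\int_{S^{n-1}}a\geq\lambda$ together with the upper bound $a\leq\Lambda$; it imposes no pointwise lower bound on $a$, which may vanish on large portions of the sphere, so $L$ need not belong to $\mathcal L_0$ and the Krylov--Safonov-type estimates for that class are not directly available. This is precisely the reason the lemma is attributed to \cite{RS-stable} rather than to the classical $\mathcal L_0$ theory: the proof there proceeds by a compactness/blow-up argument combined with a Liouville-type classification for stable operators, not by the incremental-quotient bootstrap over a uniformly elliptic class. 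Your fallback of simply citing \cite{RS-stable} is therefore the correct move, but the intermediate justification should not lean on $\mathcal L_0$ membership.
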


As a consequence of the previous lemma we find the following.

\begin{lem} \label{2s-gain}
Let $s$ and $\gamma$ belong to $(0,1)$, with $\gamma<2s$.
Let $U$ be an open set with nonempty boundary.
Then, for all $\beta<2s$,
\[ \|w\|_{\beta;U}^{(-\gamma)}\le C\biggl( \|w\|_{L^\infty(\R^n)}+ \|Lw\|_{0;U}^{(2s-\gamma)}\biggr)\]
for all $w$ with finite right hand side.
The constant $C$ depends only on $n$, $s$, $\gamma$, and $\beta$.
\end{lem}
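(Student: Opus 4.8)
The plan is to deduce the weighted (interior) estimate from the scale-invariant estimate of Lemma \ref{2s-gain-1} by a standard covering/rescaling argument, exactly as one passes from a ball estimate to a weighted-norm estimate in the theory of Schauder-type seminorms with distance weights. First I would recall the meaning of the weighted norms: for a seminorm of order $\beta$ with weight exponent $-\gamma$, one has $\|w\|_{\beta;U}^{(-\gamma)}=\sup_{x\in U} d_x^{\,\beta-\gamma}\,[\text{local }C^\beta\text{ content of }w\text{ at }x]$ together with the lower-order terms weighted analogously, where $d_x=\mathrm{dist}(x,\partial U)$; likewise $\|Lw\|_{0;U}^{(2s-\gamma)}=\sup_{x\in U} d_x^{\,2s-\gamma}|Lw(x)|$. (These are the conventions of \cite{RS-Dir}; I would state them precisely at the start of the section containing this lemma.) The aim is then to bound $d_x^{\,\beta-\gamma}$ times the $C^\beta$-content of $w$ near a generic $x\in U$.

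The core step: fix $x_0\in U$, set $\rho=d_{x_0}/2$, and rescale by defining $\tilde w(y)=w(x_0+\rho y)$ on $\R^n$. Since $L$ is homogeneous of order $2s$ under this dilation, $L\tilde w(y)=\rho^{2s}(Lw)(x_0+\rho y)$. Apply Lemma \ref{2s-gain-1} to $\tilde w$ on $B_1$ (noting $B_\rho(x_0)\subset U$, so $\tilde w$ is controlled there): this gives
\[
[\tilde w]_{C^\beta(B_{1/2})}\le C\Bigl(\|L\tilde w\|_{L^\infty(B_1)}+\|\tilde w\|_{L^\infty(B_1)}+\sup_{R\ge 1}R^{\epsilon-2s}\|\tilde w\|_{L^\infty(B_R)}\Bigr).
\]
Rescaling back, $[\tilde w]_{C^\beta(B_{1/2})}=\rho^\beta[w]_{C^\beta(B_{\rho/2}(x_0))}$ and $\|L\tilde w\|_{L^\infty(B_1)}=\rho^{2s}\|Lw\|_{L^\infty(B_\rho(x_0))}\le \rho^{2s}\cdot C\rho^{\gamma-2s}\|Lw\|_{0;U}^{(2s-\gamma)}\cdot C'=C\rho^{\gamma}\|Lw\|_{0;U}^{(2s-\gamma)}$ (using that for $x$ in $B_\rho(x_0)$ one has $d_x\asymp\rho$). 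The terms $\|\tilde w\|_{L^\infty(B_1)}$ and $\sup_{R\ge1}R^{\epsilon-2s}\|\tilde w\|_{L^\infty(B_R)}$ are both bounded by $\|w\|_{L^\infty(\R^n)}$ (for the tail term, choosing $\epsilon<2s$ and using $R^{\epsilon-2s}\le 1$). Multiplying through by $\rho^{\gamma-\beta}$ yields
\[
\rho^{\gamma-\beta}\cdot\rho^\beta[w]_{C^\beta(B_{\rho/2}(x_0))}=\rho^\gamma[w]_{C^\beta(B_{\rho/2}(x_0))}\le C\bigl(\rho^\gamma\|Lw\|_{0;U}^{(2s-\gamma)}+\rho^\gamma\|w\|_{L^\infty(\R^n)}\bigr),
\]
i.e. $d_{x_0}^{\,\beta-\gamma}$ times the local $C^\beta$-content at $x_0$ is bounded by $C(\|Lw\|_{0;U}^{(2s-\gamma)}+\|w\|_{L^\infty(\R^n)})$, since $\rho\lesssim\mathrm{diam}$ is bounded (and $\gamma>0$). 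Taking the supremum over $x_0\in U$ controls the top-order piece of $\|w\|_{\beta;U}^{(-\gamma)}$; the lower-order pieces (the weighted sup and the weighted $C^{\beta'}$-contents for $\beta'<\beta$) are handled identically by the same rescaling, using the full statement of Lemma \ref{2s-gain-1} (or interpolation between the $C^\beta$ bound and the $L^\infty$ bound on each ball).

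The main obstacle I anticipate is purely bookkeeping rather than conceptual: making the weighted-norm conventions of \cite{RS-Dir} precise and checking that the comparison $d_x\asymp d_{x_0}$ on $B_{d_{x_0}/2}(x_0)$ is applied uniformly across all the constituent terms of $\|w\|_{\beta;U}^{(-\gamma)}$ — in particular matching the weight exponent $2s-\gamma$ on the right-hand side with the gain of $2s$ powers of $\rho$ coming from the order of $L$, so that the net power of $\rho$ is exactly $\gamma>0$ and hence bounded. One should also note that $U$ is only assumed to have nonempty boundary (so $d_x$ is well-defined and finite), and no smoothness of $\partial U$ is used, which is consistent with Lemma \ref{2s-gain-1} being an interior estimate on balls. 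Finally, the hypothesis "for all $w$ with finite right-hand side" is exactly what lets us run the argument: if $\|Lw\|_{0;U}^{(2s-\gamma)}<\infty$ and $w\in L^\infty(\R^n)$, then $L\tilde w$ is bounded on each $B_1$, so Lemma \ref{2s-gain-1} applies (after a routine mollification/approximation to reduce to $w\in C^\infty$, the regularity demanded in the statement of Lemma \ref{2s-gain-1}).
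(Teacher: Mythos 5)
Your overall strategy---rescaling around each $x_0\in U$ at scale $R=\mathrm{dist}(x_0,\partial U)/2$, applying Lemma \ref{2s-gain-1}, undoing the scaling, and taking the supremum over $x_0$---is the same as the paper's. However, there is a genuine gap in your treatment of the zeroth-order terms, and it sits exactly at the point that makes the lemma nontrivial. You take $\tilde w(y)=w(x_0+\rho y)$ and bound $\|\tilde w\|_{L^\infty(B_1)}$ and $\sup_{R\ge 1}R^{\epsilon-2s}\|\tilde w\|_{L^\infty(B_R)}$ by $\|w\|_{L^\infty(\R^n)}$, a quantity carrying no positive power of $\rho$. Since the top-order piece of $\|w\|_{\beta;U}^{(-\gamma)}$ localized at $x_0$ is comparable to $\rho^{\beta-\gamma}[w]_{C^\beta(B_{\rho/2}(x_0))}=\rho^{-\gamma}[\tilde w]_{C^\beta(B_{1/2})}$, what your computation actually yields is
\[
\rho^{\beta-\gamma}[w]_{C^\beta(B_{\rho/2}(x_0))}\le C\Bigl(\|Lw\|_{0;U}^{(2s-\gamma)}+\rho^{-\gamma}\|w\|_{L^\infty(\R^n)}\Bigr),
\]
which blows up as $x_0\to\partial U$. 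The display where you ``multiply through by $\rho^{\gamma-\beta}$'' hides this: multiplying $C\bigl(\rho^{\gamma}\|Lw\|_{0;U}^{(2s-\gamma)}+\|w\|_{L^\infty(\R^n)}\bigr)$ by $\rho^{\gamma-\beta}$ gives $C\bigl(\rho^{2\gamma-\beta}\|Lw\|_{0;U}^{(2s-\gamma)}+\rho^{\gamma-\beta}\|w\|_{L^\infty(\R^n)}\bigr)$, not $C\rho^{\gamma}$ times the bracket; a factor $\rho^{\gamma}$ has been silently attached to the $L^\infty$ term (note also that $\rho^{\gamma}[w]_{C^\beta}$ is not the weighted seminorm unless $\beta=2\gamma$).

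The missing idea is the subtraction used in the paper: one sets $\tilde w(y)=w(x_0+Ry)-w(x_0)$, so that $\|\tilde w\|_{C^\gamma(B_1)}\le R^\gamma[w]_{C^\gamma(\R^n)}$ and $\sup_{\rho\ge1}\rho^{-\gamma}\|\tilde w\|_{L^\infty(B_\rho)}\le R^\gamma[w]_{C^\gamma(\R^n)}$; every term on the right-hand side of Lemma \ref{2s-gain-1} (applied with $\epsilon=2s-\gamma>0$, which is where the hypothesis $\gamma<2s$ enters) then carries the factor $R^\gamma$, which exactly cancels the $R^{-\gamma}$ coming from the weight and keeps the bound uniform as $x_0$ approaches $\partial U$. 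This also shows that the right-hand side of the estimate must involve the $C^\gamma(\R^n)$ seminorm of $w$ rather than only $\|w\|_{L^\infty(\R^n)}$: the left-hand side contains $\|w\|_{C^{\gamma}(\overline U)}$, which cannot be controlled near $\partial U$ by an $L^\infty$ bound alone, and indeed this is how the lemma is invoked in the proof of Proposition \ref{intregquotient}. An argument that only ever uses $\|w\|_{L^\infty(\R^n)}$, as yours does, cannot close this gap.
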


\begin{proof}
For each $x_0\in U$, let $R=\textrm{dist}(x_0,\partial U)/2$ and $\tilde w(y)=w(x_0+Ry)-w(x_0)$.
Then, we have that
\[\|\tilde w\|_{C^\gamma(B_1)}\leq R^\gamma [w]_{C^\gamma(\R^n)},\]
\[\sup_{\rho\geq1}\rho^{-\gamma}\|\tilde w\|_{L^\infty(B_\rho)}\leq R^\gamma[w]_{C^\gamma(\R^n)},\]
and
\[\|L\tilde w\|_{L^\infty(B_1)}=R^{2s}\|Lw\|_{L^\infty(B_R(x_0))}\leq R^\gamma \|Lw\|_{0;U}^{(2s-\gamma)}.\]
Hence, using Lemma \ref{2s-gain-1}, we find that
\[\|\tilde w\|_{C^\beta(B_{1/2})} \leq CR^\gamma\left([w]_{C^\gamma(\R^n)}+\|Lw\|_{0;U}^{(2s-\gamma)}\right).\]
Then, since this happens for all $x_0\in U$, the proof finishes exactly as in the proof of \cite[Lema 2.10]{RS-Dir}.
\end{proof}

Finally, the last ingredient for the proof of Proposition \ref{intregquotient} is the following.

\begin{lem} \label{bound-I}
Let $\Omega$ be a bounded $C^{1,1}$ domain, and $U\subset \Omega$ be an open set.
Let $s$ and $\epsilon$ belong to $(0,1)$ and satisfy $\epsilon<s$.
Then,
\begin{equation}\label{eq:bound-I}
\|I_L(w,d^s)\|_{0;U}^{(s-\epsilon)}\le C\biggl( [w]_{C^{\epsilon}(\R^n)}+ [w]_{\epsilon+s;U}^{(-\epsilon)}\biggr)\,,
\end{equation}
for all $w$ with finite right hand side.
The constant $C$ depends only on $\Omega$, $s$, and $\epsilon$.
\end{lem}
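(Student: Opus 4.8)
The plan is to estimate the weighted seminorm of $I_L(w,d^s)$ at an arbitrary interior point $x_0\in U$ by rescaling. Fix $x_0\in U$ and set $\rho=\textrm{dist}(x_0,\partial\Omega)$; the quantity we must bound is $\rho^{s-\epsilon}|I_L(w,d^s)(x_0)|$ together with the corresponding H\"older-difference term needed for the $C^0$ weighted norm $\|\cdot\|_{0;U}^{(s-\epsilon)}$. The only nontrivial input is that $d^s$ is a fixed $C^{1,1}$ function inside $\Omega$ (since $\Omega$ is $C^{1,1}$ we may take $d$ to be the regularized distance), so near $x_0$ one has $|d^s(x_0)-d^s(x_0+y)|\le C\rho^{s-1}|y|$ for $|y|\le\rho/2$, while globally $|d^s(x_0)-d^s(x_0+y)|\le C(\rho^{s-1}|y|+|y|^s)$ and also $|d^s(x_0)-d^s(x_0+y)|\le C\rho^{s-1}|y|\wedge C\,\textrm{diam}(\Omega)^s$ on all of $\R^n$. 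These are exactly the estimates used for the model bilinear form in \cite[Section 4]{RS-Dir}.

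First I would split the integral defining $I_L(w,d^s)(x_0)$ into the region $B_{\rho/2}$ and its complement. On $B_{\rho/2}$ I use $|w(x_0)-w(x_0+y)|\le [w]_{\epsilon+s;U}^{(-\epsilon)}\,\rho^{-\epsilon}\,|y|^{\epsilon+s}$ (this is where the interior weighted seminorm of $w$ enters, and the exponent $\epsilon+s$ is what makes the $y$-integral of $|y|^{\epsilon+s}\cdot\rho^{s-1}|y|\cdot|y|^{-n-2s}$ convergent near $0$) and the $C^{1,1}$ bound $|d^s(x_0)-d^s(x_0+y)|\le C\rho^{s-1}|y|$; carrying out the radial integral gives a bound $\le C\rho^{s-1}\rho^{-\epsilon}[w]_{\epsilon+s;U}^{(-\epsilon)}\int_0^{\rho/2}r^{\epsilon+s+1}r^{-1-2s}\,dr\le C\rho^{s-\epsilon}[w]_{\epsilon+s;U}^{(-\epsilon)}$ after computing the exponent $\epsilon+s+1-1-2s+1=\epsilon-s+1>0$. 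On $\R^n\setminus B_{\rho/2}$ I use instead the global H\"older bound $|w(x_0)-w(x_0+y)|\le [w]_{C^\epsilon(\R^n)}|y|^\epsilon$ together with $|d^s(x_0)-d^s(x_0+y)|\le C\min(|y|^s,1)\cdot(\text{something})$; splitting further into $B_1\setminus B_{\rho/2}$ (where $|d^s(x_0)-d^s(x_0+y)|\le C\rho^{s-1}|y|$ still, or more crudely $\le C|y|^s$) and $\R^n\setminus B_1$ (where $d^s$ is bounded by a constant and $w$ is controlled by $[w]_{C^\epsilon}$), each piece again integrates to $\le C\rho^{s-\epsilon}([w]_{C^\epsilon(\R^n)}+[w]_{\epsilon+s;U}^{(-\epsilon)})$, using $\epsilon<s$ to get convergence of $\int_{\rho/2}^1 r^\epsilon r^{s-1} r^{-n-2s}r^{n-1}\,dr$-type integrals near the lower endpoint. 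Summing the pieces yields the pointwise estimate $|I_L(w,d^s)(x_0)|\le C\rho^{\epsilon-s}([w]_{C^\epsilon(\R^n)}+[w]_{\epsilon+s;U}^{(-\epsilon)})$, which is the pointwise part of \eqref{eq:bound-I}.

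It then remains to control the oscillation of $I_L(w,d^s)$, i.e.\ to bound $|x-x'|^{-(s-\epsilon)}$ times the difference $|I_L(w,d^s)(x)-I_L(w,d^s)(x')|$ for $x,x'$ in a ball of radius $\rho/2$ around $x_0$, with the appropriate power of $\rho$. For this I would use the identity $I_L(w,d^s)(x)-I_L(w,d^s)(x')=I_L(w,d^s(\cdot)-d^s(x-x'+\cdot))(x)+\ldots$ — more simply, differentiate under the integral sign: since $d^s$ is $C^{1,1}$ and $w$ is $C^{\epsilon+s}$ on a neighborhood of $x_0$ of size comparable to $\rho$, one estimates the incremental quotients of the integrand directly, exactly as in the treatment of the analogous term in \cite[Section~4]{RS-Dir}. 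The argument is the same rescaling-to-$B_1$ device: with $\tilde w(y)=w(x_0+\rho y)-w(x_0)$ and $\tilde d(y)=\rho^{-s}d^s(x_0+\rho y)$, the rescaled bilinear form $I_L(\tilde w,\tilde d)$ has exactly the right homogeneity, and one quotes the interior regularity of the model operator. Since this step is essentially identical to the corresponding computation in \cite{RS-Dir}, I would carry out the pointwise bound in detail and refer to \cite{RS-Dir} for the oscillation estimate.

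The main obstacle is bookkeeping rather than conceptual: one must track the three different regimes for $|d^s(x_0)-d^s(x_0+y)|$ (quadratic-type behavior near $x_0$ from $C^{1,1}$ regularity, $|y|^s$ growth at intermediate scales, boundedness at infinity) against the three regimes for $w$ (the interior $C^{\epsilon+s}$ bound with its $\rho^{-\epsilon}$ weight on small scales, and the global $C^\epsilon$ bound elsewhere), and check that in every pairing the resulting radial integral converges and produces precisely the weight $\rho^{s-\epsilon}$ — this is exactly where the hypotheses $\epsilon<s$ and $\epsilon+s<2s$ (equivalently $\epsilon<s$ again) are used. No genuinely new idea beyond what is already in \cite{RS-Dir} is needed, which is why the proof can be kept short by citing that reference for the routine parts.
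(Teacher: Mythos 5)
Your decomposition at scale $\rho\sim\mathrm{dist}(x_0,\partial\Omega)$ --- pairing the interior weighted seminorm of $w$ with the bound $|\nabla d^s|\le C\rho^{s-1}$ on the near region, and the global $C^\epsilon$ seminorm of $w$ with the global $C^s$ bound on $d^s$ on the far region --- is essentially the paper's own proof, which does the same split at $R=\mathrm{dist}(x_0,\partial U)/2$ and merely phrases the near-region bound for $d^s$ via the weighted seminorm $[d^s]^{(-s)}_{\epsilon+s;U}$ instead of your Lipschitz estimate. Two small corrections: by Definition \ref{definorm} the weight attached to $[w]^{(-\epsilon)}_{\epsilon+s;U}$ is $\min\{d\}^{\,\beta+\sigma}=\min\{d\}^{\,s}$, so the near-region bound on $|w(x_0)-w(x_0+y)|$ carries $\rho^{-s}$ rather than $\rho^{-\epsilon}$ (with $\rho^{-s}$ the near piece comes out exactly $C\rho^{\epsilon-s}$, as required, whereas your exponents give $C\rho^{0}$, which still suffices since $\rho$ is bounded); and your entire third paragraph is superfluous, because $\|\cdot\|_{0;U}^{(s-\epsilon)}$ is only a weighted sup norm with no oscillation term, so the pointwise estimate already completes the proof.
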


\begin{proof}
Let $x_0\in U$ and $R=\textrm{dist}(x_0,\partial U)/2$.
Let
\[K=\biggl( [w]_{C^{\epsilon}(\R^n)}+ [w]_{\epsilon+s;U}^{(-\epsilon)}\biggr)\biggl( [d^s]_{C^s(\R^n)}+ [d^s]_{\epsilon+s;U}^{(-s)}\biggr).\]
We have that
\[\begin{split}
|I_L(w,d^s)(x_0)|&\leq \Lambda\int_{\R^n}|w(x_0)-w(x_0+y)|\,|d^s(x_0)-d^s(x_0+y)|\frac{dy}{|y|^{n+2s}}\\
&\leq C\int_{B_R(0)}R^{-\epsilon-s}[w]_{\epsilon+s;U}^{(-\epsilon)}[d^s]_{\epsilon+s;U}^{(-s)}|y|^{2\epsilon+2s}\frac{dy}{|y|^{n+2s}}\\
&\qquad \qquad+C\int_{\R^n\setminus B_R(0)} [w]_{C^\epsilon(\R^n)}] [d^s]_{C^s(\R^n)}|y|^{\epsilon+s}\frac{dy}{|y|^{n+2s}}\\
&\leq CR^{\epsilon-s}K.
\end{split}\]
Hence, the result follows.
\end{proof}

We can now continue with the proof of Proposition \ref{intregquotient}.
To complete it, we need to recall the definition of the following weighted H\"older norms:

\begin{defi}\label{definorm} Let $\beta>0$ and $\sigma\ge -\beta$. Let $\beta=k+\beta'$, with $k$ integer and $\beta'\in (0,1]$.
For $w\in C^{\beta}(\Omega)=C^{k,\beta'}(\Omega)$, define the seminorm
\[ [w]_{\beta;\Omega}^{(\sigma)}= \sup_{x,y\in \Omega} \biggl(\min\{d(x),d(y)\}^{\beta+\sigma} \frac{|D^{k}w(x)-D^{k}w(y)|}{|x-y|^{\beta'}}\biggr).\]
For $\sigma\geq0$, we also define  the norm $\|\cdot\|_{\beta;\Omega}^{(\sigma)}$ as follows: in case that $\sigma\ge0$,
\[ \|w\|_{\beta;\Omega}^{(\sigma)} = \sum_{l=0}^k \sup_{x\in \Omega} \biggl(d(x)^{l+\sigma} |D^l w(x)|\biggr) + [w]_{\beta;\Omega}^{(\sigma)}\,,\]
while
\[\|w\|_{\beta;\Omega}^{(-\sigma)} = \|w\|_{C^{\sigma}(\overline \Omega)}+\sum_{l=1}^k \sup_{x\in \Omega} \biggl(d(x)^{l-\sigma} |D^l w(x)|\biggr) + [w]_{\beta;\Omega}^{(-\sigma)}.\]
\end{defi}

\begin{proof}[Proof of Proposition \ref{intregquotient}]
Let $v$ be a $C^\gamma(\R^n)$ extension of $u/d^s|_\Omega$.
Then, as in \cite[Section 4]{RS-Dir}, we have that $v$ solves the equation
\begin{equation}\label{eqnv}
Lv=\frac{1}{d^s}\left\{Lu-v\,Ld^s+I_L(v,d^s)\right\}\qquad \textrm{in}\quad \Omega,
\end{equation}
where
\[I_L(f,g)=\int_{\R^n}\bigl(f(x)-f(x+y)\bigr)\bigl(g(x)-g(x+y)\bigr)\frac{a(y/|y|)}{|y|^{n+2s}}\,dy.\]
Here, $d$ is a function that coincides with $\textrm{dist}(x,\R^n\setminus\Omega)$ in a neighborhood of $\partial\Omega$ and that is $C^{1,1}$ inside $\Omega$.
With this slight modification on the distance function, we will have that \eqref{eqnv} holds everywhere inside $\Omega$.

We want to prove that
\[\|v\|_{\beta;\,\Omega}^{(-\gamma)}\leq C,\]
where the H\"older norms $\|\cdot\|_{\beta}^{(-\gamma)}$ are defined above.

Let us use the equation for $v$ to prove the result.
Let $U\subset\subset\Omega$.
We prove next that
\[\|v\|_{\beta;\,U}^{(-\gamma)}\leq C\]
for some constant $C$ independent of $U$, and this will yield the desired result.

Since $v=u/d^s$ in $\Omega$, and $u\in C^{2s-\epsilon}$ and $d^s\in C^{1,1}$ inside $\Omega$, then it is clear that $\|v\|_{\beta;\,U}^{(-\gamma)}<\infty$.
Next we obtain an a priori bound for this seminorm in $U$.
To do it, we use equation \eqref{eqnv} and Lemma \ref{2s-gain}.
Namely,
\[\begin{split}
\|v\|_{\beta;\,U}^{(-\gamma)} &\leq \|Lv\|_{0;\,U}^{(2s-\gamma)}\\
&\leq \|d^{-s}v\,Ld^s\|_{0;\,U}^{(2s-\gamma)}+\|d^{-s}\,Lu\|_{0;\,U}^{(2s-\gamma)}+\|d^{-s}\, I_L(v,d^s)\|_{0;\,U}^{(2s-\gamma)}.
\end{split}\]

Now, by Lemma \ref{Lds} (with $\epsilon=s-\gamma>0$), we have
\[\|d^{-s}v\,Ld^s\|_{0;\,U}^{(2s-\gamma)}\leq C\|d^{s-\gamma}v\,Ld^s\|_{L^\infty(U)}\leq C\|v\|_{L^\infty(\Omega)}.\]
Similarly,
\[\|d^{-s}\, Lu\|_{0;\,U}^{(2s-\gamma)}\leq C\|Lu\|_{L^\infty(\Omega)}.\]
Moreover, by Lemma \ref{bound-I} (with $\epsilon=s-\gamma$), we have
\[\|I_L(v,d^s)\|_{0;U}^{(s-\gamma)}\le C\biggl( \|v\|_{C^{\gamma}(\R^n)}+ \|v\|_{\gamma+s;U}^{(-\gamma)}\biggr).\]
Thus, assuming $\beta>\gamma+s$ without loss of generality, we deduce that
\[\begin{split}
\|v\|_{\beta;\,U}^{(-\gamma)} &\leq C\biggl(\|Lu\|_{L^\infty(\Omega)}+ \|v\|_{C^{\gamma}(\R^n)}+ \|v\|_{\gamma+s;U}^{(-\gamma)}\biggr)\\
&\leq C\biggl(\|Lu\|_{L^\infty(\Omega)}+ \|v\|_{C^{\gamma}(\R^n)}\biggr)+ \frac12\|v\|_{\beta;U}^{(-\gamma)}.\\
\end{split}\]
This last inequality is by standard interpolation.

Hence, we have proved that
\[\|v\|_{\beta;\,U}^{(-\gamma)} \leq C\biggl(\|Lu\|_{L^\infty(\Omega)}+ \|v\|_{C^{\gamma}(\R^n)}\biggr),\]
and letting $U\uparrow\Omega$ we obtain the desired result.
\end{proof}

\section{Behavior of $L^{1/2}u$ near $\partial\Omega$}
\label{secs/2}

Throughout this section, $L$ is an operator of the form \eqref{L}-\eqref{ellipt} with $a\in C^\infty(S^{n-1})$.

We will also use the following:

\begin{defi}\label{defi-cone}
Given a $C^{1,1}$ domain $\Omega$ a point $x_0\in \partial\Omega$, and $\varepsilon>0$, we define the cone
\[\mathcal C_{x_0}=\{|(x_0-x)\cdot\nu|\geq \varepsilon\,|x-x_0|\},\]
where $\nu=\nu(x_0)$ is the outward unit normal to $\partial\Omega$ at $x_0$.
We also consider
\[\mathcal C_{x_0}^+=\{(x_0-x)\cdot\nu\geq \varepsilon\,|x-x_0|\}\quad\textrm{and}\quad \mathcal C_{x_0}^-=\mathcal C_{x_0}\setminus\mathcal C_{x_0}^+,\]
and a ball $B_\rho(x_0)$, with $\rho>0$ small enough so that $\mathcal C_{x_0}^+\cap B_\rho(x_0)\subset\Omega$ and $\mathcal C_{x_0}^-\cap B_\rho(x_0)\subset \R^n\setminus\Omega$.
\end{defi}

\begin{thm}\label{thlaps/2}
Let $\Omega$ be a bounded and $C^{1,1}$ domain, $L$ be given by \eqref{L}-\eqref{ellipt} with $a\in C^\infty(S^{n-1})$, and $u$ be a function such that $u\equiv0$ in $\R^n\setminus\Omega$ and that $Lu$ is bounded in $\Omega$.
Let $x_0\in \partial\Omega$, and let $\nu$, $\mathcal C_{x_0}$ and $\rho$ as in Definition \ref{defi-cone}.

Then, for all $x\in \mathcal C_{x_0}\cap B_\rho(x_0)$,
\[L^{1/2}u(x)= c_1\bigl\{\log^-|x-x_0|+c_2\chi_{\Omega}(x)\bigr\}\sqrt{\mathcal A(\nu(x_0))}\left(\frac{u}{d^s}\right)(x_0)+h(x),\]
where $\mathcal A$ is given by \eqref{A}, and $h$ is a $C^\gamma(\R^n)$ function satisfying
\[\|h\|_{C^\gamma(\mathcal C_{x_0}\cap B_\rho(x_0))}\leq C,\]
with $C$ independent of $x_0$.

Here, the number $(u/d^s)(x_0)$ has to be understood as a limit (recall that $u/d^s\in C^\alpha(\overline\Omega)$), and $c_1$ and $c_2$ are constants that depend only on $n$ and $s$.
\end{thm}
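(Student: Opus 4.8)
The plan is to reduce the statement to a model computation at a half-space, exactly as in the analogous result in \cite{RS-Poh} for $(-\Delta)^{s/2}$, but now tracking the anisotropy through the kernel $b$ provided by Lemma \ref{lem-Fourier}. First I would localize: writing $u = u_1 + u_2$ where $u_1$ agrees with the one-dimensional profile $\kappa\,(u/d^s)(x_0)\,\varphi_{x_0}$ near $x_0$ (with $\varphi_{x_0}(x)=((x_0-x)\cdot\nu)_+^s$ and $\kappa=(u/d^s)(x_0)$) and $u_2 = u - u_1$ is the remainder. The regularity of $u/d^s$ up to the boundary (Theorem \ref{krylov}) together with the interior estimate of Proposition \ref{intregquotient} shows that $u_2$ vanishes faster than $d^s$ near $x_0$, in a quantified $C^\gamma$-weighted sense; the point of the decomposition is that all the singular (logarithmic) behavior of $L^{1/2}u$ on the cone $\mathcal C_{x_0}$ comes from $u_1$, while $L^{1/2}u_2$ will be shown to be $C^\gamma$ up to $\partial\Omega$ near $x_0$.

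\medskip

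Next I would compute $L^{1/2}$ of the model profile. By Lemma \ref{lem-Fourier}, $L^{1/2}w(x)=\int_{\R^n}(w(x)-w(x+y))\,b(y/|y|)|y|^{-n-s}\,dy$ with $b\in C^\infty(S^{n-1})$. Applying this to $\varphi_{x_0}(x)=((x_0-x)\cdot\nu)_+^s$, the integral reduces, after integrating out the $n-1$ directions tangent to $\nu$, to a one-dimensional computation of the form $\int_{\R}(t_+^s - (t+r)_+^s)\,|r|^{-1-s}\,\beta(\textrm{sign}\,r)\,dr$ type, whose value is $c_1\{\log^-|t| + c_2\chi_{\{t>0\}}\}$ times a constant built from $b$; the key identity is that the relevant angular integral of $b$ against $|\nu\cdot\theta|^s$ equals $c\,\mathcal A(\nu)^{1/2}$ by \eqref{B-symbol}. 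This is where the factor $\sqrt{\mathcal A(\nu(x_0))}$ enters, and it is the genuinely new feature compared to $(-\Delta)^{s/2}$, where $b$ is explicit and isotropic. One must be careful that $b$ need not be positive (as noted after Lemma \ref{lem-Fourier}), but this does not affect the computation since only the even part against $|\nu\cdot\theta|^s$ survives and that is controlled by \eqref{B-symbol}.

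\medskip

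Then I would estimate the error terms. There are two sources: (i) the difference between $u$ and $\kappa\varphi_{x_0}$ near $x_0$, i.e. $L^{1/2}u_2$; and (ii) the contribution to $L^{1/2}u_1$ coming from the region where $u_1$ differs from $\kappa\varphi_{x_0}$ (i.e. away from $x_0$, where the flat profile has been cut off and glued to something $C^{1,1}$ or bounded). For (i), the weighted Hölder control of $u/d^s$ from Proposition \ref{intregquotient} and Theorem \ref{krylov}, combined with $|\nabla u|\le Cd^{s-1}$, gives that $u_2$ and its relevant difference quotients decay like a power of $d$ strictly better than $s$; splitting the defining integral of $L^{1/2}$ into $B_\rho$, annular, and far regions and using homogeneity of the kernel yields $\|L^{1/2}u_2\|_{C^\gamma}\le C$ on $\mathcal C_{x_0}\cap B_\rho(x_0)$. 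For (ii), one uses that $Lu$ is bounded so $u$ is regular in the interior (Theorem \ref{thm-RV}) and that $b$ is smooth and homogeneous, so the cutoff contributions are $C^\gamma$ up to the boundary. The main obstacle, in my view, is item (i): getting the error $h$ to be genuinely $C^\gamma$ up to the boundary (not merely bounded) on the cone requires the sharp interior estimates for $u/d^s$ with the correct weights — precisely why Section \ref{regv} was developed — and it requires handling the nonlocal tail of $L^{1/2}$ carefully, since $L^{1/2}$ has order $s$ and its kernel is only homogeneous, not explicit. Everything else is a (somewhat lengthy but routine) adaptation of the corresponding argument in \cite{RS-Poh}, with the spectral-measure bookkeeping from Lemma \ref{lem-Fourier} threaded through.
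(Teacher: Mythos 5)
Your strategy is sound and, once unwound, lands on the same core computations as the paper, but the decomposition you choose is genuinely different: you split $u$ \emph{additively} as $u=\kappa\,\varphi_{x_0}+u_2$ with $\kappa=(u/d^s)(x_0)$ and push all the error into $L^{1/2}u_2$, whereas the paper splits \emph{multiplicatively}, writing $u=v\,d^s$ with $v$ a $C^\alpha$ extension of $u/d^s$ and using the product rule to get $L^{1/2}u=v\,L^{1/2}d^s+h$ with $h\in C^\alpha(\R^n)$ (Proposition \ref{proplaps2}), and only afterwards expanding $L^{1/2}d^s$ against the flat profile (Proposition \ref{prop:Lap-s/2-delta-s}). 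Your middle step is exactly the paper's Lemma \ref{claim1}: the $1$-D reduction via Lemma 2.1 of \cite{RS-K} together with \eqref{B-symbol} is precisely where $\sqrt{\mathcal A(\nu(x_0))}$ comes from, and your remark that the possible sign changes of $b$ are harmless is correct. The trade-off is that your single error term $u_2=(v-v(x_0))d^s+v(x_0)(d^s-\varphi_{x_0}\zeta)+\dots$ concentrates two difficulties that the paper treats separately: the piece $(v-v(x_0))d^s$ forces you back to the product rule and the commutator bound $I(v,d^s)$ (Lemmas \ref{lem-boundI2} and \ref{lem-s-derivatives}, fed by the weighted interior estimates of Proposition \ref{intregquotient}), while the piece $d^s-\varphi_{x_0}\zeta$ is the paper's Lemma \ref{claim2}. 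So the multiplicative route buys a cleaner separation of ``regularity of $u/d^s$'' from ``geometry of $\partial\Omega$ versus the flat profile''; your route buys a more transparent identification of where the logarithm and the factor $\sqrt{\mathcal A(\nu)}$ come from.

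One intermediate claim is stated too strongly and would fail if taken literally: $u_2$ does \emph{not} decay faster than $d^s$ in a full neighborhood of $x_0$ (at a boundary point $z\neq x_0$ one has $u_2\sim(v(z)-v(x_0))\,d^s$), so $L^{1/2}u_2$ retains logarithmic singularities and a $\chi_\Omega$-jump at every nearby boundary point; it is H\"older only on the cone $\mathcal C_{x_0}$, where $d(x)\sim|x-x_0|$ tempers these singularities by the factor $|v(z)-v(x_0)|=O(|z-x_0|^\alpha)$. Your final estimate is correctly restricted to $\mathcal C_{x_0}\cap B_\rho(x_0)$, but to actually prove H\"older continuity there you will need the paper's two-step interpolation (a $C^{s/2}$ bound along normal segments as in Lemma \ref{claim2} plus the gradient bound $|\nabla L^{1/2}d^s|\le C\,d^{-1}$, combined as in Claim 3.10 of \cite{RS-Poh}); a naive three-region splitting of the integral gives only pointwise bounds, not the modulus of continuity.
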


The proof of this result is split into two results: Propositions \ref{proplaps2} and \ref{prop:Lap-s/2-delta-s}.

The first one, stated next, compares the behavior of $L^{1/2}u$ near $\partial\Omega$ with the one of $L^{1/2}(d^s)$.
Recall that, by Lemma \ref{lem-Fourier},
\[L^{1/2}w(x)=\int_{\R^n}\bigl(w(x)-w(x+y)\bigr)\frac{b(y/|y|)}{|y|^{n+s}}\,dy,\]
for some $b\in C^\infty(S^{n-1})$.

\begin{prop}\label{proplaps2}
Let $\Omega$ be a bounded and $C^{1,1}$ domain, $L$ be given by \eqref{L}-\eqref{ellipt} with $a\in C^\infty(S^{n-1})$, and $u$ be a function such that $u\equiv0$ in $\R^n\setminus\Omega$ and that $Lu$ is bounded in $\Omega$.

Then, there exists a $C^{\alpha}(\R^n)$ extension $v$ of $u/d^s|_\Omega$ such that
\[L^{1/2}u=v\,L^{1/2}d^s+h\ \mbox{ in }\ \R^n,\]
where $h\in C^{\alpha}(\R^n)$, and
\[\|h\|_{C^\alpha(\R^n)}\leq C\]
for some constant $C$ that does not depend on $\theta$.
\end{prop}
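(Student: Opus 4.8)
The plan is to follow the strategy of \cite{RS-Poh}, writing $u = v\,d^s$ in $\Omega$ with $v$ a suitable $C^\alpha(\R^n)$ extension of $u/d^s|_\Omega$, and then estimating the commutator between $L^{1/2}$ and multiplication by $v$. More precisely, for $x\in\R^n$ we write formally
\[
L^{1/2}u(x) = L^{1/2}(v\,d^s)(x) = v(x)\,L^{1/2}(d^s)(x) + \int_{\R^n}\bigl(v(x)-v(x+y)\bigr)\bigl(d^s(x)-d^s(x+y)\bigr)\frac{b(y/|y|)}{|y|^{n+s}}\,dy,
\]
at least when $d^s$ is interpreted as the function that equals $\mathrm{dist}(\,\cdot\,,\R^n\setminus\Omega)^s$ in $\Omega$ and is extended by $0$ outside; some care is needed because $d^s$ is not bounded and $v$ is merely $C^\alpha$, but the kernel of $L^{1/2}$ has order $s<1$ and the product structure of the second integrand ($C^\alpha$ times $C^s$, so a joint modulus $|y|^{\alpha+s}$ near $y=0$) makes it absolutely convergent. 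So the content of the Proposition is the identity $h := L^{1/2}u - v\,L^{1/2}d^s = I_{L^{1/2}}(v,d^s)$ together with the bound $\|h\|_{C^\alpha(\R^n)}\le C$.

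First I would fix the extension $v$. By Theorem \ref{krylov} (applicable since $a\in C^\infty\subset L^\infty$, so the ellipticity assumptions are equivalent) the quotient $u/d^s|_\Omega$ is $C^\gamma(\overline\Omega)$ for every $\gamma<s$; take $\alpha<s$ and let $v$ be a $C^\alpha(\R^n)$ extension, compactly supported, with $\|v\|_{C^\alpha(\R^n)}\le C\|u/d^s\|_{C^\alpha(\overline\Omega)}\le C$. Next I would verify the pointwise identity $L^{1/2}u = v\,L^{1/2}d^s + I_{L^{1/2}}(v,d^s)$ in $\R^n$: inside $\Omega$ this is just the algebraic identity $w_1w_2(x)-w_1w_2(x+y) = w_1(x)\bigl(w_2(x)-w_2(x+y)\bigr) + \bigl(w_1(x)-w_1(x+y)\bigr)w_2(x+y)$ symmetrized, exactly as in \cite{RS-Dir} for the analogous formula \eqref{eqnv} but now with the order-$s$ kernel $b/|y|^{n+s}$ in place of the order-$2s$ kernel; outside $\overline\Omega$ both $u$ and $d^s$ vanish in a neighborhood, so the identity is immediate there. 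The main step is then to prove $I_{L^{1/2}}(v,d^s)\in C^\alpha(\R^n)$ with a uniform bound. I would split the $y$-integral at $|y|=d(x)/2$ (or at $|y|=\mathrm{dist}(x,\partial\Omega)/2$ when $x\notin\overline\Omega$): on the inner region use the interior regularity of $d^s$ (it is $C^{1,1}$ strictly inside $\Omega$, so $|d^s(x)-d^s(x+y)|\le C d(x)^{s-1}|y|$) together with $|v(x)-v(x+y)|\le [v]_{C^\alpha}|y|^\alpha$, giving an integrand $\lesssim d(x)^{s-1}|y|^{1+\alpha-n-s}$ which integrates to $C d(x)^{\alpha}\le C$; on the outer region use the global $C^\alpha$ bound on $v$ and the global $C^s$ bound on $d^s$ (note $d^s\in C^s(\R^n)$ since $0<s<1$), giving integrand $\lesssim |y|^{\alpha+s-n-s}=|y|^{\alpha-n}$ over $|y|\gtrsim d(x)$, which integrates to $C d(x)^{\alpha}$ again — so $\|I_{L^{1/2}}(v,d^s)\|_{L^\infty(\R^n)}\le C$, and near $\partial\Omega$ the outer contribution indeed decays. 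For the $C^\alpha$ seminorm I would use the same decomposition and estimate $|I_{L^{1/2}}(v,d^s)(x_1)-I_{L^{1/2}}(v,d^s)(x_2)|$ by the standard two-point argument (bound the difference by $|x_1-x_2|^\alpha$ times the constant), which is routine once the weighted estimates above are in place, following \cite[Section 2]{RS-Dir} verbatim; alternatively one invokes the weighted Hölder machinery of Section \ref{regv} directly with the order-$s$ kernel.

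The hard part — and the place where the anisotropy enters — is not really the $C^\alpha$ bound on $h$, which is a perturbative commutator estimate; it is making sure that \emph{all} the constants ($C$ in the statement, and implicitly those controlling $\|v\|_{C^\alpha}$ and the interior $C^{1,1}$ character of $d^s$) depend only on $\Omega$, $s$, and the ellipticity constants $\lambda,\Lambda$, and in particular not on the direction variable $\theta$ or on finer features of $a$ beyond its $L^\infty$ bound. This is why the approximation argument of Section \ref{approx} and the direction-uniform estimates of Theorem \ref{krylov} and Section \ref{regv} are prerequisites. A secondary technical point is the passage between the ``honest'' distance $\mathrm{dist}(x,\R^n\setminus\Omega)$ and the modified $C^{1,1}$ function $d$ used inside $\Omega$: since they agree near $\partial\Omega$, the difference of their $s$-th powers is $C^{1,1}$-bounded on all of $\R^n$ and hence contributes a smooth-enough error that can be absorbed into $h$, so this does not affect the conclusion. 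Once Proposition \ref{proplaps2} is established in this form, Theorem \ref{thlaps/2} will follow by combining it with Proposition \ref{prop:Lap-s/2-delta-s}, which computes the precise logarithmic blow-up of $L^{1/2}d^s$ on the cone $\mathcal C_{x_0}$ and produces the factor $\sqrt{\mathcal A(\nu(x_0))}$ via the identity \eqref{B-symbol}.
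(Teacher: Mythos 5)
Your decomposition of $L^{1/2}(v\,d^s)$ is incorrect, and the error hides the main technical content of the proposition. The Leibniz-type rule for these operators reads
\[
L^{1/2}(v\,d^s)=v\,L^{1/2}d^s+d^s\,L^{1/2}v-I(v,d^s),
\]
where $I(v,d^s)$ is the bilinear form \eqref{Is}; this follows from the algebraic identity $w_1w_2(x)-w_1w_2(x+y)=w_1(x)\bigl(w_2(x)-w_2(x+y)\bigr)+w_2(x)\bigl(w_1(x)-w_1(x+y)\bigr)-\bigl(w_1(x)-w_1(x+y)\bigr)\bigl(w_2(x)-w_2(x+y)\bigr)$, and however you symmetrize, the term $d^s\,L^{1/2}v$ is there. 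Your claim that $h=L^{1/2}u-v\,L^{1/2}d^s$ equals $I(v,d^s)$ drops this term (and gets the sign of $I$ wrong). The omission is not harmless: $v$ is only $C^\alpha(\R^n)$ with $\alpha<s$, so $L^{1/2}v$ --- an operator of order $s$ --- is not even defined pointwise from the global regularity of $v$ alone. Making sense of it and proving $d^s\,L^{1/2}v\in C^\alpha(\R^n)$ is precisely where Proposition \ref{intregquotient} (the interior estimate $\|v\|_{s+\alpha;\Omega}^{(-\alpha)}\le C$, obtained from the equation satisfied by $v$) enters, combined with Lemma \ref{lem-s-derivatives} to get $\|L^{1/2}v\|_{\alpha;\Omega}^{(s-\alpha)}\le C$: the factor $d^s$ then exactly compensates the $d^{\alpha-s}$ blow-up of $L^{1/2}v$ near $\partial\Omega$. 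Your proposal invokes the interior regularity of $v$ only to justify convergence of $I$, never to control this missing term, so as written it proves a false identity rather than the proposition.

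The part you do carry out --- the Hölder bound on $I(v,d^s)$ --- is essentially the paper's Lemma \ref{lem-boundI2}, and your splitting of the $y$-integral at scale $d(x)$ gives the $L^\infty$ bound correctly. Note, however, that the two-point estimate naturally yields only a $C^{\alpha/2}$ modulus (each factor in the integrand contributes half of the increment $|x_1-x_2|$), which is what the paper proves and which suffices since $\alpha<s$ is arbitrary; claiming the full $C^\alpha$ seminorm is a minor over-statement. The essential fix needed is to restore the term $d^s\,L^{1/2}v$ and supply the weighted interior estimates that control it.
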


The second result gives the singular behavior of $L^{1/2}d^s$ near $\partial\Omega$.

It is important to notice that, in the following result, $d\equiv0$ in $\R^n\setminus\Omega$, while $\delta>0$ in $\R^n\setminus\Omega$.

\begin{prop}\label{prop:Lap-s/2-delta-s}
Let $\Omega$ be a bounded and $C^{1,1}$ domain, $L$ be given by \eqref{L}-\eqref{ellipt} with $a\in C^\infty(S^{n-1})$.
Let $x_0\in \partial\Omega$, and let $\nu$, $\mathcal C_{x_0}$ and $\rho$ as in Definition \ref{defi-cone}.

Then,  for all $x\in \mathcal C_{x_0}\cap B_\rho(x_0)$,
\[L^{1/2}(d^s)(x)=c_1\left\{\log^-|x-x_0|+c_2\chi_\Omega(x)\right\}\sqrt{\mathcal A(\nu(x_0))} + h_1(x),\]
where $h_1$ is $C^{\alpha}(\R^n)$, and $\log^-t=\min\{\log t,0\}$.
\end{prop}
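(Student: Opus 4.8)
The plan is to compute $L^{1/2}(d^s)(x)$ for $x$ in the cone $\mathcal C_{x_0}\cap B_\rho(x_0)$ by reducing, as far as possible, to a one-dimensional model problem and then handling the error terms with the Hölder regularity at our disposal. The first step is to introduce the half-space profile $\varphi_{x_0}(x)=((x_0-x)\cdot\nu)_+^s = (\delta_{x_0})_+^s$, where $\delta_{x_0}(x)=(x_0-x)\cdot\nu$ is the signed linear approximation of $d$ near $x_0$, and to split
\[
L^{1/2}(d^s) = L^{1/2}\varphi_{x_0} + L^{1/2}(d^s-\varphi_{x_0}).
\]
Since $d^s - \varphi_{x_0}$ vanishes to order $s+1$ near $x_0$ (using that $\Omega$ is $C^{1,1}$, so $|d-\delta_{x_0}|\le C|x-x_0|^2$ near $\partial\Omega$ and $d^s$ is $C^{1,1}$ inside), one estimates the kernel integral defining $L^{1/2}(d^s-\varphi_{x_0})$ directly, splitting into $B_{d(x)/2}$, $B_1\setminus B_{d(x)/2}$, and $\R^n\setminus B_1$ exactly as in the proof of Lemma \ref{Lds}; the worst contribution is logarithmic but in fact one should be able to absorb it into the $C^\alpha$ error $h_1$, since the singular logarithmic part will come entirely from $\varphi_{x_0}$. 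So the real content is computing $L^{1/2}\varphi_{x_0}$.

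The second step is the explicit computation of $L^{1/2}\varphi_{x_0}$. Because $\varphi_{x_0}$ depends only on the single variable $t=(x_0-x)\cdot\nu$, and $L^{1/2}$ has the representation from Lemma \ref{lem-Fourier} with kernel $b(y/|y|)|y|^{-n-s}$, integrating out the $n-1$ directions orthogonal to $\nu$ reduces $L^{1/2}$ acting on $\varphi_{x_0}$ to a one-dimensional operator of order $s$ with a kernel $\tilde b(r)|r|^{-1-s}$, where the one-dimensional weight is obtained by integrating $b$ against the appropriate power. Here is where the Fourier symbol enters: by Lemma \ref{lem-Fourier}, formula \eqref{B-symbol}, the effective one-dimensional constant is proportional to $\int_{S^{n-1}}|\nu\cdot\theta|^s b(\theta)\,d\theta = c\,\sqrt{\mathcal A(\nu(x_0))}$ (up to the constant $c_s$), which explains the factor $\sqrt{\mathcal A(\nu(x_0))}$ in the statement. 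The remaining task is then the scalar computation of the order-$s$ operator applied to $t_+^s$; a change of variables shows $L^{1/2}(t_+^s)$ is a homogeneous-degree-zero function of $t$, i.e. it is constant on $\{t>0\}$ and a (different) constant on $\{t<0\}$ — \emph{but} these constants are generically different, and the jump across $t=0$ is precisely what produces, after one more integration in the correct normalization, the $\log^-|x-x_0|$ behavior and the $\chi_\Omega$ term. Concretely, $L^{1/2}(t_+^s)$ is $O(1)$ away from $t=0$ but, evaluated at the point $x$ whose signed distance is $t=\delta_{x_0}(x)$, one must track how the $b$-kernel integral over $\R^n$ (not just the line) degenerates as $x\to x_0$: the contribution of the ball $B_\rho(x_0)$ in which $\varphi_{x_0}$ has a genuine $s$-Hölder singularity produces the logarithm, with coefficient $c_1\sqrt{\mathcal A(\nu(x_0))}$, and the indicator term $c_2\chi_\Omega(x)$ records the one-sided jump. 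I would carry out this scalar computation by writing $L^{1/2}(t_+^s)(t) = \int_\R (t_+^s - (t+r)_+^s)\,\tilde b(\textrm{sgn}\,r)|r|^{-1-s}\,dr$, noting homogeneity, and matching with the known identity for $(-\Delta)^{s/2}$ (where $\mathcal A\equiv 1$) from \cite{RS-Poh} to fix $c_1,c_2$.

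The third step is to show the error is $C^\alpha(\R^n)$ uniformly in $x_0$. The errors are of three kinds: (i) the contribution of $d^s-\varphi_{x_0}$ just discussed; (ii) the discrepancy between the true outward normal $\nu(x)$ at nearby boundary points and $\nu(x_0)$, which is $O(|x-x_0|)$ since $\Omega$ is $C^{1,1}$, hence contributes a Hölder-continuous (indeed Lipschitz-order) perturbation to $\mathcal A(\nu(\cdot))$; and (iii) the kernel integrals over regions bounded away from $x_0$, where $\varphi_{x_0}$ and $d^s$ are smooth so standard estimates apply. The smoothness of $b$ (Lemma \ref{lem-Fourier}) and of $a$ (assumed $C^\infty$ throughout this section) is what makes all these remainders Hölder-continuous with a constant depending only on $n$, $s$, the ellipticity constants $\lambda,\Lambda$, and the $C^{1,1}$ character of $\Omega$, and in particular \emph{not} on $x_0$. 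The main obstacle I anticipate is precisely the bookkeeping in step two: isolating cleanly the $\log^-|x-x_0|$ singularity and the $\chi_\Omega$ jump from $L^{1/2}\varphi_{x_0}$, getting the constants $c_1, c_2$ right, and making sure the cone condition (which keeps $x$ uniformly transversal to $\partial\Omega$, so that $|\delta_{x_0}(x)|\sim|x-x_0|$) is used correctly so that the estimates on $\mathcal C_{x_0}\cap B_\rho(x_0)$ are uniform; this is the analogue of the delicate computation of $(-\Delta)^{s/2}(d^s)$ near $\partial\Omega$ in \cite{RS-Poh}, but complicated here by the fact that the kernel $b$ of $L^{1/2}$ is no longer explicit and is genuinely anisotropic, so the reduction to one dimension must be done through the symbol identity \eqref{B-symbol} rather than by an explicit formula.
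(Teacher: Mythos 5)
Your overall strategy is the paper's: split off a one-dimensional half-space profile, reduce $L^{1/2}$ of that profile to a 1-D computation via the symbol identity \eqref{B-symbol} (which is exactly where $\sqrt{\mathcal A(\nu(x_0))}$ comes from), import the explicit scalar computation from \cite{RS-Poh}, and show the remainder is H\"older. However, there are two genuine gaps. First, your model profile $\varphi_{x_0}=((x_0-x)\cdot\nu)_+^s$ is unbounded, growing like $|x|^s$ at infinity, so the integral defining $L^{1/2}\varphi_{x_0}$ diverges logarithmically at infinity (the integrand behaves like $|y|^{s}\cdot|y|^{-n-s}=|y|^{-n}$); the same divergence afflicts $L^{1/2}(d^s-\varphi_{x_0})$. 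Consequently your homogeneity argument --- that $L^{1/2}(t_+^s)$ is degree-zero homogeneous, hence ``constant on each side of $t=0$'' --- is vacuous, and the subsequent claim that the jump produces the logarithm ``after one more integration'' is not a derivation. The paper truncates: it works with $\phi_{x_0}(x)=((x-x_0)\cdot\nu)_-^s\,\eta((x-x_0)\cdot\nu)$, which is bounded, and then Lemma 2.1 of \cite{RS-K} plus Lemma 3.7 of \cite{RS-Poh} give $(-\Delta)^{s/2}_{\R}\bigl(t_-^s\eta(t)\bigr)=c_1\{\log|t|+c_2\chi_{(0,\infty)}(t)\}+h_0(t)$ directly; the logarithmic singularity at the boundary is the finite-scale avatar of the divergence you would otherwise face at infinity. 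Your plan to ``match with the known identity for $(-\Delta)^{s/2}$'' would work, but only after you replace $\varphi_{x_0}$ by the truncated profile.

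Second, your treatment of the remainder $d^s-\varphi_{x_0}$ does not reach the conclusion. A Lemma~\ref{Lds}-style splitting gives at best a pointwise bound $|L^{1/2}(d^s-\phi_{x_0})(x)|\le C(1+|\log d(x)|)$, and ``absorbing'' an unbounded quantity into a $C^\alpha$ error term is precisely the assertion that requires proof. What the paper actually proves (Lemma~\ref{claim2}) is an \emph{increment} estimate $|L^{1/2}w_{x_0}(x)-L^{1/2}w_{x_0}(x_0)|\le C|x-x_0|^{s/2}$ along the normal segment $S_{x_0}$, using the second-order tangency $|w_{x_0}(x+y)|\le C|y|^{2s}$ (note: the sharp order of vanishing is $2s$, not $s+1$ as you claim; your $s+1$ only holds where $d\gtrsim|x-x_0|$) together with the global $C^s$ bound, optimizing the splitting radius $\rho=|x-x_0|^{1/2}$. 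This oscillation bound in the normal direction is then combined with an interior gradient estimate $|\nabla L^{1/2}d^s(x)|\le C\,{\rm dist}(x,\partial\Omega)^{-1}$ and an interpolation lemma (Claim 3.10 in \cite{RS-Poh}) to conclude $h_0\in C^\gamma$; finally one passes from the nearest-point parametrization $x^*$ to the fixed vertex $x_0$ using that $|x-x^*|/|x-x_0|$ and $\nu(x^*)$ are Lipschitz on the cone. Your step three correctly identifies the $\nu(x^*)$ versus $\nu(x_0)$ issue, but the oscillation-plus-gradient mechanism that actually yields H\"older continuity of the error is missing from your argument.
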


To prove these results it is important to recall that, by Lemma \ref{lem-Fourier}, we have
\[L^{1/2}w(x)=\int_{\R^n}\bigl(w(x)-w(x+y)\bigr)\frac{b(y/|y|)}{|y|^{n+s}}\,dy\]
for some $b\in C^\infty(S^{n-1})$.

In the proof of Proposition \ref{proplaps2} we will also use the product rule
\[L^{1/2}(w_1w_2)=w_1L^{1/2}w_2+w_2L^{1/2}w_1-I(w_1,w_2),\]
where
\begin{equation}\label{Is}
I(w_1,w_2)(x)= \int_{\R^n} \bigl(w_1(x)-w_1(x+y)\bigr)\bigl(w_2(x)-w_2(x+y)\bigr)\frac{b(y/|y|)}{|y|^{n+s}}\,dy.
\end{equation}
The next lemma will lead to a H\"older bound for $I(d^s,v)$.

\begin{lem}\label{lem-boundI2}
Let $\Omega$ be a bounded domain, and $I$ be given by \eqref{Is}.
Then, for each $\alpha\in(0,1)$,
\begin{equation}\label{eq:bound-I2}
\|I(d^s,w)\|_{C^{\alpha/2}(\R^n)}\le C[w]_{C^\alpha(\R^n)},
\end{equation}
where the constant $C$ depends only on $n$, $s$, and $\alpha$.
\end{lem}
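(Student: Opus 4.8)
The plan is to estimate the bilinear form $I(d^s,w)$ pointwise and in a H\"older seminorm by exploiting the cancellation built into the second difference, together with the mild (at most $C^s$, degenerating like $d^{s-1}$ near $\partial\Omega$) regularity of $d^s$ and the full $C^\alpha(\R^n)$ regularity of $w$. First I would fix $x\in\R^n$ and split the integral defining $I(d^s,w)(x)$ into the near region $B_{R}(0)$ and the far region $\R^n\setminus B_R(0)$, where $R=R(x)$ will be chosen comparable to $d(x)$ (or simply $R=1$ when $d(x)\ge 1$, after noting the far part is always harmlessly bounded since $d^s$ and $w$ are globally bounded and $|y|^{-n-s}$ is integrable at infinity). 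On $B_R$ I would use $|d^s(x)-d^s(x+y)|\le C\,d(x)^{s-1}|y|$ for $|y|\le R\le d(x)/2$ (so that the segment $[x,x+y]$ stays in the region where $d^s$ is Lipschitz with constant $\asymp d(x)^{s-1}$) and $|w(x)-w(x+y)|\le [w]_{C^\alpha(\R^n)}|y|^\alpha$; the resulting integrand is $\lesssim d(x)^{s-1}\,|y|^{1+\alpha}\,|y|^{-n-s}$, which integrates over $B_R$ to $\lesssim d(x)^{s-1}R^{1+\alpha-s}$. On $\R^n\setminus B_R$ I would instead use $|d^s(x)-d^s(x+y)|\le C|y|^s$ (global $C^s$ bound on $d^s$, which holds for the true distance and for a $C^{1,1}$ modification of it) and the same $C^\alpha$ bound on $w$, giving integrand $\lesssim |y|^{s+\alpha}|y|^{-n-s}=|y|^{\alpha-n}$ with $\alpha<s$... actually $\alpha-n-0$, so this part is $\lesssim R^{\alpha-s}$ — wait, one integrates $|y|^{\alpha-n}$ over $|y|>R$ which diverges; so in the far region I would rather use the global boundedness $|w(x)-w(x+y)|\le 2\|w\|_{L^\infty}$ and $|d^s(x)-d^s(x+y)|\le C|y|^s$ truncated, or better, keep $|y|^\alpha$ up to $|y|=1$ and boundedness beyond, yielding a bound $\lesssim R^{\alpha-s}\cdot[w]_{C^\alpha}$ plus a bounded remainder. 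Balancing, with $R\asymp d(x)$, the near term contributes $d(x)^{s-1}d(x)^{1+\alpha-s}=d(x)^\alpha$ and the far term $d(x)^{\alpha-s}$ times... these do not obviously balance to $\alpha/2$; the correct choice, as in the analogous Lemma in \cite{RS-Poh}, is $R\asymp d(x)^{1/2}$ or to track that the true statement is a $C^{\alpha/2}$ bound, so I would optimize the splitting radius to get the $L^\infty$ bound $\|I(d^s,w)\|_{L^\infty}\le C[w]_{C^\alpha(\R^n)}$ first.

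Next, for the H\"older seminorm, I would take two points $x_1,x_2$ and, writing $r=|x_1-x_2|$, consider two cases. If $r\ge c\,\min(d(x_1),d(x_2))$ (the points are far apart relative to their distance to the boundary), I would use the pointwise bound already obtained together with the decay of $I(d^s,w)$ toward the boundary — more precisely a refined version showing $|I(d^s,w)(x)|\le C[w]_{C^\alpha}\,d(x)^{\alpha/2}$ (which comes out of the same splitting by not discarding the $d(x)$-smallness) — to conclude $|I(d^s,w)(x_1)-I(d^s,w)(x_2)|\le C[w]_{C^\alpha}\,r^{\alpha/2}$. If instead $r<c\,\min(d(x_1),d(x_2))$, I would estimate the difference directly: $I(d^s,w)(x_1)-I(d^s,w)(x_2)$ is again an integral, and I would split once more into $|y|<R'$ and $|y|\ge R'$ with $R'\asymp r$, using in the inner part the extra Lipschitz-in-$x$ smoothness of both $d^s$ and $w$ available away from the boundary (each difference quotient in $x$ costs a factor like $d^{-1}$ or $d^{\alpha-1}$ on $w$, controlled since $r<d$), and in the outer part the triangle inequality combined with the $|y|^\alpha$ and $|y|^s$ bounds, so that the $r$-dependence assembles to $r^{\alpha/2}$. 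This case analysis — matching the two regimes at the threshold $r\asymp d$ so that the exponents meet at $\alpha/2$ — is where the specific power $\alpha/2$ (rather than $\alpha$) is forced, and it is essentially the same bookkeeping as in \cite[Lemma~(...)]{RS-Dir} and \cite{RS-Poh}.

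The main obstacle, and the place where care is genuinely needed, is the interplay near $\partial\Omega$ between the \emph{gain} of one full power of $|y|$ coming from the Lipschitz bound on $d^s$ at scale $|y|\ll d(x)$ and the \emph{loss} coming from the blow-up $d(x)^{s-1}$ of that Lipschitz constant as $x\to\partial\Omega$: one must verify that the far-region contribution does not overwhelm this, and that the optimized splitting radius produces exactly the exponent $\alpha/2$ uniformly up to the boundary. A secondary technical point is justifying $|d^s(x)-d^s(x+y)|\le C d(x)^{s-1}|y|$ on the relevant range of $y$, which follows from the fact that $d^s$ is $C^{1,1}$ in the interior with $|\nabla d^s|\le C d^{s-1}$ and $|D^2 d^s|\le C d^{s-2}$ (for the $C^{1,1}$-modified distance this holds globally inside $\Omega$, and outside $\Omega$ one has $d^s\equiv0$), so that along a segment of length $|y|\le d(x)/2$ the gradient stays $\lesssim d(x)^{s-1}$. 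Once these two points are settled, the rest is the routine two-region, two-regime estimate sketched above; I would present it compactly, citing \cite{RS-Poh,RS-Dir} for the parts identical to the isotropic case and only spelling out the use of the $C^\infty$ kernel $b$ (in place of the explicit Riesz kernel) where it enters, namely only through the uniform bounds $0<|b|\le C$ on $S^{n-1}$ that make the kernel $|b(y/|y|)|/|y|^{n+s}$ comparable to $|y|^{-n-s}$.
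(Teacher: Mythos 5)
Your plan has two genuine gaps, and the first one is fatal to the case analysis you build the H\"older estimate on. You claim a ``refined'' pointwise bound $|I(d^s,w)(x)|\le C[w]_{C^\alpha}\,d(x)^{\alpha/2}$ and use it to dispose of the regime $|x_1-x_2|\gtrsim \min(d(x_1),d(x_2))$ by the triangle inequality. But $I(d^s,w)$ does \emph{not} vanish on $\partial\Omega$: at $x_0\in\partial\Omega$ one has $d^s(x_0)=0$, so
\[
I(d^s,w)(x_0)=-\int_{\R^n} d^s(x_0+y)\bigl(w(x_0)-w(x_0+y)\bigr)\frac{b(y/|y|)}{|y|^{n+s}}\,dy,
\]
which is a convergent integral that is generically a nonzero quantity of size $\asymp[w]_{C^\alpha}$ (take for instance $w=-d^s$, which is admissible since $\alpha<s$ in the applications: then the integrand is $(d^s(x_0+y))^2 K(y)\ge 0$ and the value is strictly negative for $b>0$). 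The contribution of the intermediate annulus $d(x)\lesssim|y|\lesssim 1$ in your own splitting already produces an $O(1)$ term, not an $O(d(x)^{\alpha/2})$ term, which is consistent with this. Without the decay, the ``far-apart'' case of your dichotomy has no argument. The second gap is in the other case: you invoke ``extra Lipschitz-in-$x$ smoothness of \dots $w$ away from the boundary,'' with a gradient bound like $d^{\alpha-1}$. The lemma assumes only $[w]_{C^\alpha(\R^n)}<\infty$; $w$ is an arbitrary $C^\alpha$ function and has no interior regularity beyond that (in the application it happens to, by Proposition \ref{intregquotient}, but the lemma is stated and used as a statement about general $C^\alpha$ functions, and its proof must not borrow that).

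The paper's proof shows that none of this machinery is needed: there is no splitting according to $d(x)$ and no case analysis in $|x_1-x_2|$. One writes, with $A_i=w(x_i)-w(x_i+y)$ and $B_i=d^s(x_i)-d^s(x_i+y)$, the algebraic identity $A_1B_1-A_2B_2=(A_1-A_2)B_1+A_2(B_1-B_2)$, and estimates the two resulting integrals $J_1,J_2$ globally. The only inputs are: (a) the interpolation bound $|A_1-A_2|\le 2[w]_{C^\alpha}\min\{|y|^\alpha,|x_1-x_2|^\alpha\}\le 2[w]_{C^\alpha}|y|^{\alpha/2}|x_1-x_2|^{\alpha/2}$ for the second difference of $w$ (and the analogous one for $d^s$), and (b) the global bounds $\|d^s\|_{C^s(\R^n)}\le C$ together with ${\rm supp}\,d^s=\overline\Omega$, which give $|B_1|\le C\min\{|y|^s,({\rm diam}\,\Omega)^s\}$ and make the kernel integrable both at $0$ and at infinity. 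The exponent $\alpha/2$ comes from the geometric-mean step in (a), not from optimizing a splitting radius against $d(x)$. I would redo the argument along these lines; the interior estimates $|\nabla d^s|\le Cd^{s-1}$ that you spend effort justifying play no role here.
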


\begin{proof}
Let $x_1, x_2\in \R^n$.
Then,
\[|I(d^s,w)(x_1)-I(d^s,w)(x_2)| \le J_1 + J_2,\]
where
\[J_1= \int_{\R^n} \bigl|w(x_1)-w(x_1+y)-w(x_2)+w(x_2+y)\bigr|
\bigl|d^s(x_1)-d^s(x_1+y)\bigr|\,\frac{C}{|y|^{n+s}}\,dy  \]
and
\[J_2= \int_{\R^n} \bigl|w(x_2)-w(x_2+y)\bigr|
\bigl|d^s(x_1)-d^s(x_1+y)-d^s(x_2)+d^s(x_2+y)\bigr|\,\frac{C}{|y|^{n+s}}\,dy \,.\]
Using that $\|d^s\|_{C^{s}(\R^n)}\le 1$ and ${\rm supp}\,d^s = \overline\Omega$,
\[\begin{split}
J_1&\le \int_{\R^n}  \bigl|w(x_1)-w(x_1+y)-w(x_2)+w(x_2+y)\bigr|\min\{|y|^s,({\rm diam}\, \Omega)^s\} \frac{C}{|y|^{n+s}}\,dy \\
&\le  C\int_{\R^n}  [w]_{C^\alpha(\R^n)}|x_1-x_2|^{\alpha/2}|y|^{\alpha/2}\min\{|y|^s,1\} \frac{C}{|y|^{n+s}}\,dy \\
&\le  C |x_1-x_2|^{\alpha/2}[w]_{C^\alpha(\R^n)}\,.
\end{split}\]
Analogously,
\[J_2\le  C |x_1-x_2|^{\alpha/2}[w]_{C^\alpha(\R^n)}\,.\]

Finally, the bound for $\|I(d^s,w)\|_{L^\infty(\R^n)}$ is obtained with a similar argument, and hence \eqref{eq:bound-I2} follows.
\end{proof}

The following lemma, which is the analog of Lemma 4.3 in \cite{RS-Dir}, will be used in the proof of Proposition \ref{proplaps2} below (with $w$ replaced by $v$) and also in the next section (with $w$ replaced by $u$).

Recall that the norms $\|w\|_{\beta;\Omega}^{(\sigma)}$ were defined in the previous section.

\begin{lem}\label{lem-s-derivatives}
Let $\Omega$ be a bounded domain and $\alpha$ and $\beta$ be such that $0<\alpha\le s<\beta$ and $\beta-s$ is not an integer.
Let $k$ be an integer such that $\beta= k+\beta'$ with $\beta'\in(0,1]$.
Then,
\begin{equation}\label{eq:bound-lap-s/2}
[L^{1/2}w]_{\beta-s;\Omega}^{(s-\alpha)}\le C\bigl( \|w\|_{C^\alpha(\R^n)}+ \|w\|_{\beta;\Omega}^{(-\alpha)}\bigl)
\end{equation}
for all $w$ with finite right hand side.
The constant $C$ depends only on $n$, $s$, $\alpha$, and $\beta$ (but not on $\theta$).
\end{lem}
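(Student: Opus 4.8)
\textbf{Proof plan for Lemma \ref{lem-s-derivatives}.}
The strategy is a standard rescaling-to-unit-scale argument, reducing the weighted estimate to an interior estimate on a ball of radius one. Fix $x_0\in\Omega$ and set $R=d(x_0)/2$ (or, more precisely, $R=\min\{d(x_0),1\}/2$ so that the far-field terms behave well). Define the rescaled function $\tilde w(y)=\big(w(x_0+Ry)-P(y)\big)$, where $P$ is the Taylor polynomial of $w$ at $x_0$ of degree $k-1$ (or degree $k$ if one prefers to subtract the full jet and track the top derivative separately); subtracting a polynomial of degree $<s+\ldots$ is harmless because $L^{1/2}$ annihilates polynomials of degree zero and, after differentiating, the relevant quantity is insensitive to polynomials of sufficiently low degree. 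The point of the rescaling is that, by the definition of the weighted norms in Definition \ref{definorm}, the hypotheses translate into scale-invariant bounds on $\tilde w$: one gets $\|\tilde w\|_{C^\alpha(B_1)}\le CR^{\alpha-?}\cdots$ together with a controlled growth $\sup_{\rho\ge1}\rho^{\alpha-\beta}\|\tilde w\|_{L^\infty(B_\rho)}\le C(\cdots)$, the exponents being dictated exactly so that the final inequality \eqref{eq:bound-lap-s/2} comes out with the weight $d(x_0)^{\beta-s+(s-\alpha)}=d(x_0)^{\beta-\alpha}$ in front of $[L^{1/2}w]$.

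The analytic core is then a \emph{local} Schauder-type estimate for the operator $L^{1/2}$, whose kernel $b(y/|y|)|y|^{-n-s}$ is, by Lemma \ref{lem-Fourier}, smooth on the sphere, even, homogeneous of degree $-n-s$, and has Fourier symbol $\mathcal B(\xi)$ positive and $C^\infty$ away from the origin — i.e. $L^{1/2}$ is a classical elliptic pseudodifferential operator of order $s$. For such operators one has the interior bound
\[
[L^{1/2}\tilde w]_{C^{\beta-s}(B_{1/2})}\le C\Big(\|L^{1/2}\tilde w\|_{L^\infty(B_1)}+\|\tilde w\|_{C^\beta(B_1)}+\sup_{\rho\ge1}\rho^{\alpha-\beta}\|\tilde w\|_{L^\infty(B_\rho)}\Big),
\]
which follows from splitting the integral defining $L^{1/2}\tilde w$ into the region $B_1$ (handled by the symbol-class/Schauder estimate for the homogeneous operator, exactly as in Section 0.2 of \cite{TaylorBook}) and the tail $\R^n\setminus B_1$ (where the decay assumption on $\tilde w$ makes the kernel integrable and one differentiates under the integral sign; note $b$ may change sign, but only its size and smoothness matter here). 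This is the analogue, for $L^{1/2}$, of Lemma \ref{2s-gain-1} for $L$; since the paper already cites \cite{RS-Dir} for the parallel statement (Lemma 4.3 there) and the argument is routine pseudodifferential calculus plus the kernel structure from Lemma \ref{lem-Fourier}, I would present it compactly. One must be mildly careful with the hypothesis that $\beta-s$ is not an integer, which is precisely the non-exceptional case where the Schauder estimate for an order-$s$ operator gains exactly $s$ derivatives in Hölder scale.

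Finally one unwinds the rescaling: substituting $\tilde w$ back and using $L^{1/2}(\tilde w)(y)=R^{s}\,(L^{1/2}w)(x_0+Ry)$ (with the polynomial part contributing nothing to the seminorm in question), the displayed interior bound becomes, after multiplying by the appropriate power of $R=d(x_0)/2$, precisely a bound on $d(x_0)^{\beta-\alpha}$ times the local $C^{\beta-s}$ seminorm of $L^{1/2}w$ near $x_0$, with right-hand side $C(\|w\|_{C^\alpha(\R^n)}+\|w\|_{\beta;\Omega}^{(-\alpha)})$. Taking the supremum over $x_0\in\Omega$ (and observing that for pairs $x,y$ at distance comparable to $\min\{d(x),d(y)\}$ the two-point seminorm is controlled by the one-point local seminorms, while distant pairs are handled trivially by the $L^\infty$ part of the norm, exactly as in the passage concluding the proof of Lemma \ref{2s-gain}) gives \eqref{eq:bound-lap-s/2}. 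The main obstacle — really the only nontrivial point — is establishing the interior Schauder estimate for $L^{1/2}$ with the correct tail term and the correct exponents; once that is in hand, the rest is the same bookkeeping with weighted norms already carried out for $L$ in Section \ref{regv} and in \cite{RS-Dir}.
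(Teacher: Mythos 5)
Your plan is correct and is essentially the paper's own route: the paper proves Lemma \ref{lem-s-derivatives} by noting that the argument of Lemma 4.3 in \cite{RS-Dir} (rescaling to unit scale at each $x_0$ with $R=d(x_0)/2$, then direct estimates on the homogeneous kernel $b(y/|y|)\,|y|^{-n-s}$, splitting into a near part controlled by $\|w\|_{\beta;\Omega}^{(-\alpha)}$ and a tail controlled by $\|w\|_{C^\alpha(\R^n)}$) carries over verbatim, the only point being that $b$ is $C^{\beta-s}$ on the sphere --- precisely the property you isolate. A minor simplification: subtracting only the constant $w(x_0)$ suffices, since the higher derivatives entering the $C^{\beta-s}$ seminorm are handled by differentiating the kernel under the integral sign rather than by subtracting a Taylor polynomial, so the delicate point you raise about $L^{1/2}$ of polynomials does not actually arise.
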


\begin{proof}
The proof is exactly the same as the one of Lemma 4.3 in \cite{RS-Dir}.
The only important point in the proof is that the kernel $b(y/|y|)$ is a $C^{\beta-s}$ function on the unit sphere -- which is the case here since $b\in C^\infty(S^{n-1})$.
\end{proof}

Next we give the:

\begin{proof}[Proof of Proposition \ref{proplaps2}]
Since $Lu\in L^\infty(\Omega)$, then $u/d^s|_\Omega$ is $C^{\alpha}(\overline\Omega)$ for some $\alpha\in(0,s)$.
Thus, we may define $v$ as a $C^\alpha(\R^n)$ extension of $u/d^s|_\Omega$.

Then, we have that
\[L^{1/2}u(x)=v(x)L^{1/2}d^s(x)+d^s(x)L^{1/2}v(x)-I(v,d^s),\]
where
\[I(v,d^s)= \int_{\R^n} \bigl(v(x)-v(x+y)\bigr)\bigl(d^s(x)-d^s(x+y)\bigr)\frac{b(y/|y|)}{|y|^{n+s}}\,dy.\]
This equality is valid in all of $\R^n$ because $d^s\equiv0$ in $\R^n\backslash \Omega$ and $v\in C^{\alpha+s}$ inside $\Omega$ -- by Proposition \ref{intregquotient}.
Thus, we only have to see that the terms $d^sL^{1/2}v$ and $I(v,d^s)$ belong to $C^{\alpha}(\R^n)$.

For the first one we combine Proposition \ref{intregquotient} with $\beta=s+\alpha$ and Lemma \ref{lem-s-derivatives}.
We obtain
\begin{equation}\label{paloma}
\|L^{1/2}v\|_{\alpha;\Omega}^{(s-\alpha)} \le C,
\end{equation}
and this immediately yields $d^sL^{1/2}v \in C^{\alpha}(\R^n)$; see the proof of Proposition 3.1 in \cite{RS-Poh} for more details.

The second bound, that is,
\[\|I(v,d^s)\|_{C^{\alpha}(\R^n)}\leq C,\]
follows from Lemma \ref{lem-boundI2}.
\end{proof}

Let us now prove Proposition \ref{prop:Lap-s/2-delta-s}.
For it, we need some lemmas.

\begin{lem}\label{claim1}
Let $L$ be given by \eqref{L}-\eqref{ellipt} with $a\in C^\infty(S^{n-1})$.

Let $\eta$ be a $C^\infty_c(\R)$ with support in $(-2,2)$ and such that $\eta\equiv1$ in $[-1,1]$.
Let $\nu\in S^{n-1}$, and
\begin{equation}\label{phi}
\phi_{x_0}(x)=\bigl((x-x_0)\cdot\nu\bigr)_-^s\,\eta((x-x_0)\cdot\nu),
\end{equation}
where $z_-=\min\{z,0\}$.
Then, we have
\[L^{1/2} \phi (x) = c_1\bigl\{\log |(x-x_0)\cdot \nu| + c_2\chi_{(0,\infty)}(x)\bigr\}\sqrt{\mathcal A(\nu)} + h(x)\]
for $x\in B_{1/2}(x_0)$, where $h\in C^s(B_{1/2}(x_0))$.
\end{lem}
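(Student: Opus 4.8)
\textbf{Proof proposal for Lemma \ref{claim1}.}

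The plan is to reduce the computation to a one-dimensional problem. Since $L^{1/2}$ has kernel $b(y/|y|)|y|^{-n-s}$ with $b \in C^\infty(S^{n-1})$ even, and since $\phi_{x_0}$ depends only on the single variable $t = (x-x_0)\cdot\nu$, I would first show that $L^{1/2}$ acting on such a ``laminar'' function can be written as a one-dimensional operator. Concretely, writing $\psi(t) = t_-^s\eta(t)$ so that $\phi_{x_0}(x) = \psi((x-x_0)\cdot\nu)$, and integrating the kernel over the $(n-1)$-dimensional hyperplanes perpendicular to $\nu$, one gets
\[
L^{1/2}\phi_{x_0}(x) = \int_{-\infty}^{+\infty}\bigl(\psi(t)-\psi(t+\tau)\bigr)\frac{\tilde b(\tau)}{|\tau|^{1+s}}\,d\tau, \qquad t=(x-x_0)\cdot\nu,
\]
where $\tilde b(\tau)/|\tau|^{1+s} = \int_{\nu^\perp}b\bigl((\tau\nu + z)/|\tau\nu+z|\bigr)|\tau\nu+z|^{-n-s}\,dz$, and by homogeneity $\tilde b$ is an even constant; one identifies its value via the Fourier symbol: integrating against $e^{i\xi t}$ and matching with $\mathcal B(\xi\nu) = (c\int_{S^{n-1}}|\xi\cdot\theta|^{2s}a(\theta)d\theta)^{1/2} = |\xi|^s\sqrt{\mathcal A(\nu)}$ forces $\tilde b = \text{const}\cdot \sqrt{\mathcal A(\nu)}$. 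So everything is reduced to computing $(-d^2/dt^2)^{s/2}$-type one-dimensional fractional operator applied to $t_-^s$ near $t=0$.

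Next I would do the explicit one-dimensional computation. Split $\psi = t_-^s + (t_-^s(\eta-1))$; the second piece is supported away from $t=0$ and is $C^\infty$ there, contributing a function that is $C^s$ (indeed $C^\infty$) near $t=0$, which goes into $h$. For the main piece $t_-^s$, the key classical fact is that the one-dimensional operator of order $s$ applied to $t_-^s$ produces a logarithm: more precisely $\int_{-\infty}^\infty(t_-^s - (t+\tau)_-^s)|\tau|^{-1-s}\,d\tau$ equals (up to the normalizing constant) $c_1\log|t| + c_1 c_2 \chi_{(0,\infty)}(t)$ plus a bounded Hölder remainder — this is a computation analogous to the one in \cite{RS-Poh} giving the behavior of $(-\Delta)^{s/2}$ on $x_+^s$. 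The jump term $c_2\chi_{(0,\infty)}$ arises because $t_-^s$ is $0$ for $t>0$ but behaves like $|t|^s$ for $t<0$; the two-sided nature of the one-dimensional kernel produces a principal-value discontinuity across $t=0$. Scaling the $\tilde b = \text{const}\sqrt{\mathcal A(\nu)}$ factor back in gives exactly the stated form, with $c_1, c_2$ depending only on $n$ and $s$ (the $n$-dependence entering through the constant relating $\tilde b$ to $b$ and the normalization $c$ in the symbol).

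Finally I would collect the error terms: the contribution of $(\eta-1)t_-^s$, the difference between $L^{1/2}\phi_{x_0}$ evaluated at $x\in B_{1/2}(x_0)$ and its value depending only on $t=(x-x_0)\cdot\nu$ (there is none, by the laminar reduction — but one must check the reduction is valid, i.e. the iterated integral converges absolutely, which it does since $\psi$ is bounded with compact support and $s<1$), and the regularity of the remainder. The Hölder bound $h\in C^s(B_{1/2}(x_0))$ follows from the fact that $\log|t| + c_2\chi_{(0,\infty)}(t)$ captures precisely the singular part: the remainder is a one-dimensional fractional integral of a function that is $C^s$ away from the origin with controlled behavior, yielding a $C^s$ function, and this regularity is inherited in $\R^n$ since the dependence is only through the smooth linear map $x\mapsto (x-x_0)\cdot\nu$.

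\textbf{Main obstacle.} The delicate point is the laminar reduction together with the exact identification of the constant in front: one must carefully justify that integrating the $n$-dimensional kernel over hyperplanes perpendicular to $\nu$ yields a \emph{finite} one-dimensional kernel proportional to $|\tau|^{-1-s}$ (this uses $0<s<1$ and the boundedness of $b$, so the transverse integral $\int_{\nu^\perp}|\tau\nu+z|^{-n-s}\,dz \sim |\tau|^{-1-s}$ converges), and that its coefficient is exactly $\sqrt{\mathcal A(\nu)}$ up to a dimensional constant — this is where the Fourier-symbol identity \eqref{B-symbol} from Lemma \ref{lem-Fourier} must be invoked to pin down the constant and, crucially, to produce the factor $\sqrt{\mathcal A(\nu)}$ rather than $\mathcal A(\nu)$ or some other expression in $a$. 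Getting this matching right is precisely the new ingredient in the anisotropic case.
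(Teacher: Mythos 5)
Your proposal takes essentially the same route as the paper's proof: the laminar reduction to one dimension (which the paper obtains by citing Lemma 2.1 of \cite{RS-K} rather than by your explicit transverse integration), the identification of the resulting coefficient $\int_{S^{n-1}}|\nu\cdot\theta|^s b(\theta)\,d\theta = c\sqrt{\mathcal A(\nu)}$ via \eqref{B-symbol} of Lemma \ref{lem-Fourier}, and the one-dimensional logarithmic computation quoted from Lemma 3.7 of \cite{RS-Poh}; you have correctly isolated the appearance of $\sqrt{\mathcal A(\nu)}$ as the key anisotropic point. One technical slip worth noting: in the splitting $\psi=t_-^s+t_-^s(\eta-1)$ neither piece admits a pointwise-defined operator of order $s$ (both grow like $|t|^s$ as $t\to-\infty$, so the tail integral $\int(t+\tau)_-^s|\tau|^{-1-s}\,d\tau$ diverges logarithmically), which is why one should compute $(-\Delta)^{s/2}_{\R}$ directly on the cut-off function $t_-^s\,\eta(t)$, as is done in \cite{RS-Poh}.
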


\begin{proof}
On the one hand, since $\phi_{x_0}$ is a 1-D function, then by Lemma 2.1 in \cite{RS-K} we have that
\[L^{1/2}\phi_{x_0}(x)=c_s\,\mathcal B(\nu)(-\Delta)^{s/2}_{\R}\phi((x-x_0)\cdot\nu),\]
where $\phi(t)=(t_-)^s\,\eta(t)$ and
\[\mathcal B(\nu)=\int_{S^{n-1}}|\nu\cdot \theta|^sb(\theta)d\theta.\]
Moreover, by Lemma \ref{lem-Fourier}, we have $\mathcal B(\nu)=c\sqrt{\mathcal A(\nu)}$ for some constant $c$.

On the other hand, by Lemma 3.7 in \cite{RS-Poh}, we have that
\[(-\Delta)^{s/2}_{\R}\phi(t)=c_1\bigl\{\log |t| + c_2\chi_{(0,\infty)}(t)\bigr\} + h_0(t),\]
with $h_0\in C^s$.
Thus, the result follows.
\end{proof}

\begin{rem}\label{remrho0}
Throughout the rest of the Section the quantity~$\rho_0>0$ will be a fixed constant, depending only on $\Omega$, such that every point on $\partial\Omega$ can be touched from both inside and outside $\Omega$ by balls of radius $\rho_0$.
\end{rem}

\begin{lem}\label{claim2}
Let $s\in (0,1)$, and $L$ be an operator of the form \eqref{L}-\eqref{ellipt}, with $a\in C^\infty(S^{n-1})$.
Let $\Omega$ be any bounded $C^{1,1}$ domain in $\R^n$, and let $\rho_0$ be given by Remark \ref{remrho0}.

Fix $x_0\in\partial\Omega$, and define $\phi_{x_0}$ as in \eqref{phi}, with $\nu=\nu(x_0)$ the outward unit normal to $\partial\Omega$ at $x_0$.
Let us consider the segment
\begin{equation}\label{Sx0}
S_{x_0}=\{x_0+t\nu,\ t\in(-\rho_0/2,\rho_0/2)\},
\end{equation}
where $\phi$ is given by \eqref{phi} and $\nu$ is the unit outward normal to $\partial\Omega$ at $x_0$.
Define also $w_{x_0}=d^s-\phi_{x_0}$.

Then, for all $x\in S_{x_0}$,
\[|L^{1/2} w_{x_0}(x)-L^{1/2} w_{x_0}(x_0)| \le C|x-x_0|^{s/2},\]
where $C$ depends only on $\Omega$ and $\rho_0$ (and not on $x_0$).
\end{lem}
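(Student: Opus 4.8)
The plan is to estimate the H\"older oscillation of $L^{1/2}w_{x_0}$ along the normal segment $S_{x_0}$ by exploiting that $w_{x_0}=d^s-\phi_{x_0}$ is, near $x_0$, a small perturbation of the ``flat'' model: both $d^s$ and $\phi_{x_0}$ behave like a power $s$ of a distance, and since $\partial\Omega$ is $C^{1,1}$ and is touched by balls of radius $\rho_0$ from both sides, the difference $w_{x_0}$ has one extra order of vanishing near $x_0$ in the tangential directions. Concretely, I would first record the pointwise bounds on $w_{x_0}$ in dyadic annuli around $x_0$: using the $C^{1,1}$ regularity of $d$ and the fact that $\nu(x_0)$ is the normal at $x_0$, one has $|d(x)-(-(x-x_0)\cdot\nu)_+|\le C|x-x_0|^2$ near $x_0$, whence, after taking $s$-th powers and using the cutoff $\eta\equiv1$ on $[-1,1]$, a bound of the form
\[
|w_{x_0}(x_0+y)|\le C|y|^{s+1}\quad\text{for }|y|\le \rho_0/2,\qquad |w_{x_0}(x_0+y)|\le C(1+|y|^s)\quad\text{for }|y|\ge\rho_0/2,
\]
together with the interior $C^{1,1}$ bound $|D^2 w_{x_0}|\le C d^{s-2}$ inside $\Omega$ (and the analogous bound where $\phi_{x_0}$ is smooth). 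These are exactly the ingredients that made Lemma~\ref{Lds} work, now one order better.

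Second, with these bounds in hand I would run the standard ``incremental quotient'' computation for the nonlocal operator $L^{1/2}$, whose kernel is $b(y/|y|)|y|^{-n-s}$ with $b\in C^\infty(S^{n-1})$ by Lemma~\ref{lem-Fourier}. Fix $x\in S_{x_0}$ and let $r=|x-x_0|$; write
\[
L^{1/2}w_{x_0}(x)-L^{1/2}w_{x_0}(x_0)=\int_{\R^n}\bigl(w_{x_0}(x)-w_{x_0}(x+y)-w_{x_0}(x_0)+w_{x_0}(x_0+y)\bigr)\frac{b(y/|y|)}{|y|^{n+s}}\,dy,
\]
and split the integral into $|y|<r$, $r\le|y|<\rho_0/4$, and $|y|\ge\rho_0/4$. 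On the innermost region I would use a second-order Taylor bound on $w_{x_0}$ (valid since on $S_{x_0}$ we are at distance $\sim r$ from $\partial\Omega$ on the interior side, and $w_{x_0}$ is $C^{1,1}$ there with $|D^2 w_{x_0}|\le Cr^{s-2}$), giving a contribution $\le C r^{s-2}\int_{|y|<r}|y|^{2-n-s}\,dy\le Cr^{-s}\cdot r^{2-s}\cdot r^{-2}\cdot r^{?}$ — i.e. after the elementary integration a bound $\lesssim r^{s}$, hence certainly $\lesssim r^{s/2}$ after shrinking $r$. On the middle region I would use the first-order difference bound $|w_{x_0}(x)-w_{x_0}(x_0)|\le Cr^{s+1}$ together with $|w_{x_0}(x_0+y)-w_{x_0}(x+y)|\le Cr|y|^{s}$ (from the interior gradient bound $|\nabla w_{x_0}|\le C d^{s-1}$, since along the chord from $x_0+y$ to $x+y$ of length $r$ the distance stays $\gtrsim|y|$), producing $\le Cr\int_{r}^{\rho_0/4}|y|^{s}|y|^{-n-s}\,dy$ plus $Cr^{s+1}\int_{r}^{\rho_0/4}|y|^{-n-s}dy\le C r^{s+1}r^{-s}=Cr$, again $\lesssim r^{s/2}$. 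On the outer region the integrand is controlled by $|w_{x_0}(x)-w_{x_0}(x_0)|+|w_{x_0}(x_0+y)-w_{x_0}(x+y)|\le Cr^{s+1}+Cr(1+|y|^{s})$ against the integrable tail $|y|^{-n-s}$, giving $\le Cr$.

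Third, I would assemble the three pieces: each is bounded by $Cr$ or $Cr^{s}$ with constants depending only on $\Omega$, $n$, $s$, and $\rho_0$ (and not on $x_0$, since the geometric constants are uniform on $\partial\Omega$), and since $r=|x-x_0|\le\rho_0/2$ is bounded we get $Cr\le C r^{s/2}$ and $Cr^{s}\le Cr^{s/2}$, yielding the claimed estimate $|L^{1/2}w_{x_0}(x)-L^{1/2}w_{x_0}(x_0)|\le C|x-x_0|^{s/2}$. One technical point to be careful about: the principal value is taken at interior points $x\in S_{x_0}$ and at the boundary point $x_0$ itself, so one should check that $L^{1/2}w_{x_0}(x_0)$ is well defined — which follows because near $x_0$ the integrand $w_{x_0}(x_0)-w_{x_0}(x_0+y)=-w_{x_0}(x_0+y)$ is $O(|y|^{s+1})$, hence the integral converges absolutely, no principal value needed.

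The main obstacle I expect is the bookkeeping in the middle region: controlling $w_{x_0}(x_0+y)-w_{x_0}(x+y)$ by $Cr|y|^{s}$ requires knowing that the segment joining $x_0+y$ to $x+y$ stays at distance comparable to $|y|$ from $\partial\Omega$, which uses both the interior ball condition and that $x$ lies on the \emph{normal} segment $S_{x_0}$ so that $x$ is on the interior side; one must also handle the transition near $|y|\sim r$ where ``$x+y$ close to $\partial\Omega$'' competes with the $C^{1,1}$ interior bounds, and the fact that $\phi_{x_0}$ lives on the half-space $(x-x_0)\cdot\nu<0$ while $d^s$ lives on $\Omega$ means the two are compared only where both make sense — exactly the geometry encoded by the choice of $\rho_0$ in Remark~\ref{remrho0}. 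Apart from this, the argument is a direct, if slightly lengthy, adaptation of the kernel estimates already used in Lemma~\ref{Lds} and in \cite{RS-Poh}, so I would keep the write-up short and refer to those computations for the routine parts.
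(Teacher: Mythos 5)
Your overall architecture (extra vanishing of $w_{x_0}$ near the normal segment, global H\"older regularity far away, a decomposition of the kernel integral) is the same as the paper's, but the quantitative claims that drive your near and middle regions are false, and the error shows up already in the exponent: your estimates, if correct, would yield $|L^{1/2}w_{x_0}(x)-L^{1/2}w_{x_0}(x_0)|\lesssim r^{s}$ or even $\lesssim r$, whereas the lemma (and the paper's proof) only give $r^{s/2}$. The first problem is the bound $|w_{x_0}(x_0+y)|\le C|y|^{s+1}$. Writing $\bar d_2(z)=-\nu\cdot(z-x_0)$ and $\delta_\pm=(\chi_\Omega-\chi_{\R^n\setminus\Omega})\,{\rm dist}(\cdot,\partial\Omega)$, the $C^{1,1}$ regularity gives $|\delta_\pm(x_0+y)-\bar d_2(x_0+y)|\le C|y|^2$, but from this one can only conclude $|(\delta_\pm)_+^s-(\bar d_2)_+^s|\le C|y|^{2s}$: the Lipschitz-type gain $|a^s-b^s|\lesssim \max(a,b)^{s-1}|a-b|$ that would produce $|y|^{s-1}\cdot|y|^2=|y|^{s+1}$ fails exactly for near-tangential $y$, where both quantities are themselves of size $O(|y|^2)$ and the difference of $s$-th powers is genuinely of order $|y|^{2s}$. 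Since $2s<s+1$, your claimed vanishing order is strictly too strong. The same tangential points kill the middle-region estimate $|w_{x_0}(x_0+y)-w_{x_0}(x+y)|\le Cr|y|^{s}$: the chord from $x_0+y$ to $x+y$ starts at distance $O(|y|^2)$ from $\partial\Omega$ (not $\gtrsim|y|$) and in fact crosses the boundary, so the gradient bound $|\nabla w_{x_0}|\le C\,d^{\,s-1}$ cannot be integrated along it; only the global bound $Cr^s$ survives there, and that alone gives a non-decaying $O(1)$ contribution on $r\le|y|\le\rho_0/4$. Finally, your innermost-region arithmetic is off: $r^{s-2}\int_{|y|<r}|y|^{2-n-s}\,dy\sim r^{s-2}\cdot r^{2-s}=O(1)$, not $O(r^{s})$.

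The repair is essentially the paper's proof. One first shows the \emph{uniform} bound $|w_{x_0}(x+y)|\le C|y|^{2s}$ for all $x\in S_{x_0}$ and $|y|<\rho_0/2$ — the point being that $\delta_\pm$ and $\bar d_2$ agree together with their gradients along the whole segment $S_{x_0}$, not just at $x_0$ — and notes that $w_{x_0}\equiv0$ on $S_{x_0}$, so the troublesome term $(w_{x_0}(x)-w_{x_0}(x_0))\int K$ is absent. Then one splits the integral at a radius $\rho$ \emph{to be optimized}: the near part contributes $C\rho^{s}$ (from the $|y|^{2s}$ bound) and the far part contributes $C|x-x_0|^{s}\rho^{-s}$ (from $w_{x_0}\in C^s(\R^n)$). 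Choosing $\rho=|x-x_0|^{1/2}$ balances the two and is precisely where the exponent $s/2$ comes from; a fixed cut at $|y|=r$, as in your three-region scheme, cannot close the argument with the correct bounds. The fact that your assembled estimate is strictly better than what the lemma asserts should have been a warning sign.
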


\begin{proof}
We denote $w=w_{x_0}$ and $\delta(x)={\rm dist}(x,\partial\Omega)$.

Note that, along $S_{x_0}$, the distance to $\partial\Omega$ agrees with the distance to the tangent plane to $\partial\Omega$ at $x_0$.
That is, denoting $\delta_\pm=(\chi_\Omega-\chi_{\R^n\backslash \Omega})\delta$ and $\bar d_2(x)=-\nu\cdot(x-x_0)$, we have $\delta_\pm(x)=\bar d_2(x)$ for all $x\in S_{x_0}$.
Moreover, the gradients of these two functions also coincide on $S_{x_0}$, i.e., $\nabla\delta_\pm(x)=-\nu=\nabla \bar d_2(x)$ for all $x\in S_{x_0}$.

Therefore, for all $x\in S_{x_0}$ and $y\in B_{\rho_0/2}(0)$, we have
\[|\delta_{\pm}(x+y)-\bar d_2(x+y)|\le C|y|^2\]
for some $C$ depending only on $\rho_0$.
Thus, for all $x\in S_{x_0}$ and $y\in B_{\rho_0/2}(0)$,
\begin{equation}\label{w(x+y)}
|w(x+y)|= |(\delta_{\pm}(x+y))_+^s-(\bar d_2(x+y))_+^s| \le C|y|^{2s},
\end{equation}
where $C$ is a constant depending on $\Omega$ and $s$.

On the other hand, since $w\in C^s(\R^n)$, then
\begin{equation}\label{w(x+y)2}
|w(x+y)-w(x_0+y)|\leq C|x-x_0|^s.
\end{equation}

Finally, let $\rho<\rho_0/2$ to be chosen later.
For each $x\in S_{x_0}$ we have
\[\begin{split}
|L^{1/2} &w(x)-L^{1/2} w(x_0)|
\le C\int_{\R^n} |w(x+y)-w(x_0+y)| \frac{C}{|y|^{n+s}}\,dy\\
&\le C\int_{B_\rho} |w(x+y)-w(x_0+y)| \frac{C}{|y|^{n+s}}\,dy\\
&\hspace{30mm}+C\int_{\R^n\setminus B_\rho} |w(x+y)-w(x_0+y)| \frac{C}{|y|^{n+s}}\,dy\\
&\le C\int_{B_\rho} |y|^{2s} \frac{C}{|y|^{n+s}}\,dy+
C\int_{\R\setminus B_\rho} |x-x_0|^s \frac{C}{|y|^{n+s}}\,dy\\
&= C(\rho^s +|x-x_0|^s\rho^{-s})\,,
\end{split}\]
where we have used \eqref{w(x+y)} and \eqref{w(x+y)2}.
Taking $\rho=|x-x_0|^{1/2}$ the lemma is proved.
\end{proof}

Finally, we give the proof of Proposition \ref{prop:Lap-s/2-delta-s}.

\begin{proof}[Proof of Proposition \ref{prop:Lap-s/2-delta-s}]
Let $\rho_0$ be given by Remark \ref{remrho0}, and
$$ U=\{x\in \R^n\,:\, \textrm{dist}(x,\partial\Omega)<\rho_0\}.$$
For each $x\in U$, let $x^*\in\partial\Omega$ be the unique point such that $|x-x^*|=\textrm{dist}(x,\partial\Omega)$.

Define
\[h_0(x)= L^{1/2}d^s(x)-c_1\left\{\log^-|x-x^*|+c_2\chi_\Omega(x)\right\}\sqrt{\mathcal A(\nu(x^*))}.\]
We claim that $h_0\in C^{\alpha}(U)$.

Indeed, we show next that we have
\begin{itemize}
\item[(i)] $h_0$ is locally Lipschitz in $U$ and
\[|\nabla h_0(x)| \le K|x-x^*|^{-M}\ \mbox{ in }\ U\]
for some $M>0$.
\item[(ii)] There exists $\alpha>0$ such that
\[|h_0(x)-h_0(x^*)|\le K|x-x^*|^\alpha \ \mbox{ in }\ U.\]
\end{itemize}

Then, (i) and (ii) yield that
\[\|h_0\|_{C^{\gamma}(\R^n)}\le CK\]
for some $\gamma>0$; see for example Claim 3.10 in \cite{RS-Poh}.

Let us show first (ii).
On one hand, by Lemma \ref{claim1}, for all $x_0\in\partial\Omega$ and for all $x\in S_{x_0}$, where $S_{x_0}$ is defined by \eqref{Sx0}, we have
\[h_0(x) = L^{1/2}d^s(x)- L^{1/2} \phi_{x_0}(x) + \tilde h(x),\]
where $\tilde h$ is the $C^s$ function from Lemma \ref{claim1}.
Hence, using Lemma \ref{claim2}, we find
\[|h_0(x)-h_0(x_0)|\le C|x-x_0|^{s/2}\quad \mbox{for all }x\in S_{x_0}\]
for some constant independent of $x_0$.

Recall that for all $x\in S_{x_0}$ we have $x^*=x_0$, where $x^*$ is the unique point on $\partial \Omega$ satisfying $\delta(x)= |x-x^*|$.
Hence, (ii) follows.

Let us now show (i).
Observe that $d^s\equiv0$ in $\R^n\backslash\Omega$, $|\nabla d^s|\leq Cd^{s-1}$ in $\Omega$, and $|D^2d^s|\leq Cd^{s-2}$ in $U$.
Then, letting $r=\textrm{dist}(x,\partial\Omega)/2$, we have
\[\begin{split}
|\nabla L^{1/2}d^s(x)| &\le
C\int_{\R^n} |\nabla d^s(x) - \nabla d^s(x+y)| |y|^{-n-s}\,dy\\
&\le C\int_{B_r} \frac{C r^{s-2}|y|\,dy}{|y|^{n+s}}+C\int_{\R\setminus B_r} \left(\frac{|\nabla d^s(x)|}{|y|^{n+s}}+ \frac{|\nabla d^s(x+y)|}{|y|^{1+s}}\right)dy\\
&\leq \frac Cr+\frac Cr+C\int_{\R^n\setminus B_r} |d(x+y)|^{s-1}\frac{dy}{|y|^{n+s}}.
\end{split}\]
Now, by Lemma 4.2 in \cite{RV} (with $\bar s$ and $\bar \alpha$ therein replaced by $s/2$ and $1-s/2$ here) we have that
\[\int_{\R^n\setminus B_r} |d(x+y)|^{s-1}\frac{dy}{|y|^{n+s}}\leq \frac{C}{r},\]
and thus we get
\[|\nabla L^{1/2}d^s(x)|\leq C|x-x^*|^{-1}.\]
This yields (i).

Thus, we have proved that $h_0\in C^\gamma(U)$ for some $\gamma>0$.

To finish the proof, we only have to notice that the function $|x-x^*|/|x-x_0|$ is Lipschitz in $\mathcal C_{x_0}\cap B_{1/2}(x_0)$ and bounded by below by a positive constant, so that
\[\log^-|x-x^*|-\log^-|x-x_0|\]
is Lipschitz in $\mathcal C_{x_0}\cap B_{1/2}(x_0)$.
Moreover, $\sqrt{\mathcal A(\nu(x^*)}-\sqrt{\mathcal A(\nu(x_0)}$ is also Lipschitz in $\mathcal C_{x_0}\cap B_{1/2}(x_0)$ and vanishes at $x=x_0$.
Thus, the function
\[\left\{\log^-|x-x^*|+c_2\chi_\Omega(x)\right\}\sqrt{\mathcal A(\nu(x^*))}-\left\{\log^-|x-x_0|+c_2\chi_\Omega(x)\right\}\sqrt{\mathcal A(\nu(x_0))}\]
is H\"older continuous in $\mathcal C_{x_0}\cap B_{1/2}(x_0)$.

This implies that
\[h(x)=L^{1/2}d^s(x)-c_1\left\{\log^-|x-x_0|+c_2\chi_\Omega(x)\right\}\sqrt{\mathcal A(\nu(x_0))}\]
is $C^\alpha$ in $\mathcal C_{x_0}\cap B_{1/2}(x_0)$, as desired.
\end{proof}

To end this section, we give the

\begin{proof}[Proof of Proposition \ref{thlaps/2}]
By Propositions \ref{proplaps2} and \ref{prop:Lap-s/2-delta-s}, we have that
\[L^{1/2}u(x)= c_1\bigl\{\log^-|x-x_0|+c_2\chi_{\Omega}(x)\bigr\}\sqrt{\mathcal A(\nu(x_0))}v(x)+h_1(x)\]
for some function $h_1\in C^\alpha(\mathcal C_{x_0}\cap B_\rho(x_0))$.

Thus, the result follows by taking into account that $v$ is $C^\alpha$ and that $v(x_0)=(u/d^s)(x_0)$.
\end{proof}

\section{Proof of the results in star-shaped domains}
\label{sec2}

In this section we prove Proposition \ref{intpartsB} for strictly star-shaped domains.
Recall that $\Omega$ is said to be strictly star-shaped if, for some $z_0\in\R^n$,
\begin{equation}\label{starshaped}
(x-z_0)\cdot\nu\geq c>0\qquad \textrm{for all}\ \ x\in\partial\Omega
\end{equation}
for some $c>0$.
The result for general $C^{1,1}$ domains will be a consequence of this strictly star-shaped case and will be proved in Section \ref{sec8}.

Before proving Proposition \ref{intpartsB} we state an essential ingredient in the proof of this result.
It is a fine 1-D computation that we did in \cite{RS-Poh}.

\begin{prop}[\cite{RS-Poh}]\label{propoperador}
Let $A$ and $B$ be real numbers, and
\[\varphi(t)=A\log^-|t-1|+B\chi_{[0,1]}(t)+h(t),\]
where $\log^-t=\min\{\log t,0\}$ and $h$ is a function satisfying, for some constants $\beta$ and $\gamma$ in $(0,1)$, and $C_0>0$, the following conditions:
\begin{itemize}
\item[(i)] $\|h\|_{C^{\beta}([0,\infty))}\leq C_0$.
\item[(ii)] For all $\beta\in[\gamma,1+\gamma]$
\[\|h\|_{C^{\beta}((0,1-\rho)\cup(1+\rho,2))}\leq C_0 \rho^{-\beta}\qquad \textrm{for all}\ \  \rho\in(0,1).\]
\item[(iii)] $|h'(t)|\leq C_0 t^{-2-\gamma}$ and $|h''(t)|\leq C_0 t^{-3-\gamma}$ for all $t>2$.
\end{itemize}
Then,
\[-\left.\frac{d}{d\lambda}\right|_{\lambda=1^+}\int_{0}^{\infty} \varphi\left(\lambda t\right)\varphi\left(\frac{t}{\lambda}\right)dt=A^2\pi^2+B^2.\]

Moreover, the limit defining this derivative is uniform among functions $\varphi$ satisfying (i)-(ii)-(iii) with given constants $C_0$, $\beta$, and $\gamma$.
\end{prop}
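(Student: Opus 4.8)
The claim is a purely one-dimensional statement about the bilinear form $\varphi\mapsto \int_0^\infty \varphi(\lambda t)\varphi(t/\lambda)\,dt$ evaluated at functions $\varphi$ with a logarithmic singularity at $t=1$ plus a jump across $[0,1]$ plus a remainder $h$ that is better-behaved. The strategy is to split $\varphi = \varphi_0 + h$ with $\varphi_0(t) = A\log^-|t-1| + B\chi_{[0,1]}(t)$, expand the product
\[
\varphi(\lambda t)\varphi(t/\lambda) = \varphi_0(\lambda t)\varphi_0(t/\lambda) + \varphi_0(\lambda t)h(t/\lambda) + h(\lambda t)\varphi_0(t/\lambda) + h(\lambda t)h(t/\lambda),
\]
and treat each of the four integrals separately as $\lambda\downarrow 1$. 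The key point will be that the cross terms and the $h\cdot h$ term contribute nothing to the derivative at $\lambda = 1^+$, so that only the genuinely singular piece $\int_0^\infty \varphi_0(\lambda t)\varphi_0(t/\lambda)\,dt$ carries the answer, and that piece can be computed essentially explicitly.

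First I would handle the regular contributions. For the term $\int_0^\infty h(\lambda t)h(t/\lambda)\,dt$: the hypotheses (i) and (iii) give $|h(t)|\le C_0$ near the origin and $|h(t)|\le C_0 t^{-1-\gamma}$ for large $t$ (integrating (iii)), so the integral converges locally uniformly in $\lambda$ near $1$; differentiating under the integral sign using (iii) and the interior Hölder bound (ii) shows the derivative exists and, because the integrand is symmetric under $\lambda\mapsto 1/\lambda$, the derivative at $\lambda = 1$ vanishes. The same symmetry argument, combined with the fact that $\varphi_0$ is bounded away from $t=1$ and decays, kills the two cross terms: each of $\int \varphi_0(\lambda t)h(t/\lambda)\,dt$ and $\int h(\lambda t)\varphi_0(t/\lambda)\,dt$ has, at worst, an integrable log singularity that depends smoothly on $\lambda$ away from a neighborhood of $t=1$, and near $t=1$ the singularity of $\varphi_0(\lambda t)$ at $t=1/\lambda$ versus that of $h(t/\lambda)$ at $t=\lambda$ separates; a direct estimate shows the $\lambda$-derivative of the sum of the two cross terms is zero at $\lambda=1^+$. (Here one uses that $h$ itself has no jump and only a $C^\beta$, resp. weighted-$C^\beta$, singularity, which is too mild to interact with the log.)

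The heart of the matter is then the explicit computation
\[
-\left.\frac{d}{d\lambda}\right|_{\lambda=1^+}\int_0^\infty \varphi_0(\lambda t)\varphi_0(t/\lambda)\,dt = A^2\pi^2 + B^2.
\]
After the change of variables $t\mapsto t$ and writing $\lambda = e^\mu$ with $\mu\downarrow 0$, this integral becomes a convolution-type object in the multiplicative group $\R_+$; passing to the logarithmic variable $\tau = \log t$ turns it into an ordinary convolution on $\R$ of the function $g(\tau) = A\log^-|e^\tau - 1| + B\chi_{(-\infty,0]}(\tau)$ with its reflection, evaluated at $2\mu$. One then computes $\frac{d}{d\mu}\big|_{\mu=0^+}$ of $(g * \tilde g)(2\mu)$. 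The jump part of $g$ contributes $B^2$ (the derivative of a convolution of two step functions picks up the square of the jump), the log part contributes $A^2\pi^2$ (this is the nonlocal analogue of the computation for $(-\Delta)^{1/2}$ on the half-line, where $\log|t|$ generates the factor $\pi^2$ via $\int_{-\infty}^\infty$ of $\log|{\cdot}|$ against its translate, a Hilbert-transform–type identity), and the log–jump cross term contributes $0$ by parity. This is exactly the 1-D computation already carried out in \cite{RS-Poh}, so I would quote it, verifying only that the present hypotheses (i)--(iii) on $h$ are precisely what is needed to justify the term-by-term differentiation and the uniformity of the limit.

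**Main obstacle.** The delicate point is the uniformity of the limit over the class of admissible $h$ with fixed constants $C_0,\beta,\gamma$, and in particular controlling the derivative uniformly near $t=1$, where $\varphi_0$ is unbounded: one must show that $\frac{d}{d\lambda}$ of the cross terms and of the $h\cdot h$ term is not merely zero but tends to zero uniformly, which requires a careful splitting of the domain of integration into a shrinking neighborhood of $t=1$ (where the log singularity of $\varphi_0$ lives) and its complement, with quantitative bounds from (i)--(iii) on each piece. Everything else is bookkeeping; this uniform-in-$h$ estimate near the singularity is where the real work lies, and it is what makes the proposition usable later as a limiting identity rather than just a pointwise one.
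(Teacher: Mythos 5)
The paper does not actually reprove this proposition: it is quoted from \cite{RS-Poh} as ``a fine 1-D computation,'' so your decision to defer the explicit evaluation to that reference is consistent with what the authors themselves do, and your decomposition $\varphi=\varphi_0+h$ with ``the singular part carries the answer, the remainder terms are differentiable with zero derivative'' is indeed the architecture of the proof there.

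There is, however, a genuine logical gap in the part you do argue. You dispose of the $h\cdot h$ term and of the cross terms ``because the integrand is symmetric under $\lambda\mapsto1/\lambda$.'' But the \emph{whole} integral $I(\lambda)=\int_0^\infty\varphi(\lambda t)\varphi(t/\lambda)\,dt$ is invariant under $\lambda\mapsto1/\lambda$, and the proposition asserts $I'(1^+)=-(A^2\pi^2+B^2)\neq0$; by the symmetry, $I'(1^-)=+(A^2\pi^2+B^2)$, i.e.\ $I$ has a corner at $\lambda=1$. (The simplest piece already shows this: $\int_0^\infty\chi_{[0,1]}(\lambda t)\chi_{[0,1]}(t/\lambda)\,dt=\min\{\lambda,1/\lambda\}$.) So symmetry can never, by itself, show that a term does not contribute; the entire content of the statement is which pieces are \emph{two-sidedly} differentiable at $\lambda=1$ and which are not. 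What must be proved is two-sided differentiability of the $h\cdot h$ term and of the \emph{sum} of the two cross terms (the cross terms are not individually symmetric, and each one separately has a divergent derivative coming from the log), and this is precisely where hypotheses (i)--(iii) enter. Naive differentiation under the integral sign fails: (ii) with exponent $1$ only gives $|h'(t)|\le C_0|t-1|^{-1}$, which is not integrable near $t=1$, and the second-order remainder $\int[h(\lambda t)-h(t)][h(t/\lambda)-h(t)]\,dt$ is of borderline size $O(\lambda-1)$ unless one interpolates between the global H\"older bound (i) and the weighted bounds (ii) on a $\lambda$-dependent splitting of the domain of integration. Your ``main obstacle'' paragraph shows you know such a splitting is needed, but the sketch does not supply it; likewise the evaluation producing $A^2\pi^2$ (in \cite{RS-Poh} this rests on an explicit dilogarithm identity, not merely ``a Hilbert-transform-type identity'') is asserted rather than carried out. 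As it stands the proposal is an acceptable pointer to \cite{RS-Poh} but not a self-contained proof.
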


We can give now the

\begin{proof}[Proof of Proposition \ref{intpartsB} for strictly star-shaped domains]
By the argument in \cite[Section 2]{RS-Poh}, we may assume without loss of generality that $\Omega$ is strictly star-shaped with respect to the origin, that is, $z_0=0$ in \eqref{starshaped}.

We start with the identity
\begin{equation}\label{first}
\int_\Omega(x\cdot \nabla u) Lu\, dx=\left.\frac{d}{d\lambda}\right|_{\lambda=1^+}\int_{\R^n} u_\lambda Lu\, dx,
\end{equation}
where $u_\lambda(x)=u(\lambda x)$
and~$\left.\frac{d}{d\lambda}\right|_{\lambda=1^+}$ is the derivative from the right side at $\lambda=1$.
At a formal level, formula~\eqref{first} follows by taking
derivatives under the integral sign; rigorously, this can be
justified
using the bounds $|Lu|\leq C$ and $|\nabla u|\leq Cd^{s-1}$ in $\Omega$ and the fact that $u_\lambda\equiv0$ in $\R^n\setminus\Omega$ for $\lambda>1$.

Thus, as in \cite{RS-Poh}, integrating by parts and using the change of variables $y=\sqrt{\lambda}x$, we find
\[\int_{\R^n}u_{\lambda}Lu\,dx=\lambda^{\frac{2s-n}{2}}
\int_{\R^n}w_{\sqrt{\lambda}}w_{1/\sqrt{\lambda}}dy,\]
where
\[w(x)=L^{1/2}u(x),\quad \textrm{ and }\quad w_\lambda(x)=w(\lambda x).\]
This leads to
\begin{eqnarray}
\int_\Omega(\nabla u\cdot x)Lu\, dx&=&\left.\frac{d}{d\lambda}\right|_{\lambda=1^+}
\left\{\lambda^{\frac{2s-n}{2}}
\int_{\R^n}w_{\sqrt{\lambda}}w_{1/\sqrt{\lambda}}dy\right\}\nonumber \\
&=&\frac{2s-n}{2}\int_{\R^n}|w|^2dx \\& & \qquad +\left.\frac{d}{d\lambda}\right|_{\lambda=1^+}
\int_{\R^n}w_{\sqrt{\lambda}}w_{1/\sqrt{\lambda}}dy\nonumber \\
&=&\frac{2s-n}{2}\int_{\Omega}uLu\, dx
+ \frac{1}{2}\left.\frac{d}{d\lambda}\right|_{\lambda=1^+}\int_{\R^n}w_{\lambda}w_{1/\lambda}dy.
\label{igualtatpaluego}
\end{eqnarray}
Hence, we have to prove that
\begin{equation}\label{derivadaIlambda}
-\left.\frac{d}{d\lambda}\right|_{\lambda=1^+}I_\lambda
=\Gamma(1+s)^2\int_{\partial\Omega}\mathcal A(\nu)\left(\frac{u}{d^s}\right)^2(x\cdot\nu)\,d\sigma,
\end{equation}
where
\begin{equation}\label{Ilambda}
I_\lambda=\int_{\R^n}w_{\lambda}w_{1/\lambda}dy.
\end{equation}
We write the integral \eqref{derivadaIlambda} in coordinates $(t,x_0)\in (0,\infty)\times \partial\Omega$, where each $y\in \R^n$ is written as $y=tx_0$.
We find
\begin{equation}\label{astu}
\left.\frac{d}{d\lambda}\right|_{\lambda=1^+}I_\lambda
=\left.\frac{d}{d\lambda}\right|_{\lambda=1^+}\int_{\partial\Omega}(x\cdot \nu)d\sigma(x)\int_0^\infty t^{n-1}w(\lambda tx)w\left(\frac{tx}{\lambda}\right)dt.
\end{equation}
Fix now $x_0\in \partial\Omega$, and define
\[\varphi(t)=t^{\frac{n-1}{2}}w\left(t x_0\right)=t^{\frac{n-1}{2}}L^{1/2}u(t x_0).\]
By Theorem \ref{thlaps/2}, we have
\[\varphi(t)=t^{\frac{n-1}{2}}\sqrt{\mathcal A(\nu)} c_1\bigl\{\log^-|t-1|+c_2\chi_{(0,1)}(t)\bigr\}\left(\frac{u}{d^s}\right)(x_0)+h_1(t)\]
in $[0,\infty)$, where $h_1$ is a $C^{\gamma}([0,\infty))$ function.

Thus, this yields
\[\varphi(t)=\sqrt{\mathcal A(\nu)} c_1\bigl\{\log^-|t-1|+c_2\chi_{(0,1)}(t)\bigr\}\left(\frac{u}{d^s}\right)(x_0)+h(t)\]
in $[0,\infty)$, where $h\in C^{\gamma}([0,\infty))$.

We want to apply now Proposition \ref{propoperador} to this function $\varphi(t)$.
For this, we have to check that (ii), and (iii) hold -- we already checked (i).

To check (ii), we just apply Lemma \ref{lem-s-derivatives} with $w=u$, $\beta\in(0,1+s)$, and $\alpha=s$.
We find that $\varphi$ satisfies the bound in (ii), and thus $h$ also satisfies the same bound.

To check (iii), we notice that for $x\in \R^n\setminus (2\Omega)$ we have
\[L^{1/2}u(x)=-\int_{\Omega}u(y)K(x-y)dy,\]
where $K(y)=b(y/|y|)|y|^{-n-s}$.
Since $b\in C^\infty(S^{n-1})$, differentiating under the integral sign one gets
\[|\nabla L^{1/2}u(x)|\leq C|x|^{-n-s-1}\quad\textrm{and}\quad |D^2L^{1/2}u(x)|\leq C|x|^{-n-s-2}.\]
And this yields (iii).

Therefore, we can apply Proposition \ref{propoperador} to find that, for each $x_0\in \partial\Omega$,
\[\left.\frac{d}{d\lambda}\right|_{\lambda=1^+}\int_0^\infty t^{n-1}w(\lambda tx)w\left(\frac{tx}{\lambda}\right)dt=c\,\mathcal A(\nu(x_0))\left(\frac{u}{d^s}\right)^2(x_0)\]
for some constant $c$.

Finally, by uniform convergence on $x_0$ of the limit, and by \eqref{astu}, this leads to
\[\left.\frac{d}{d\lambda}\right|_{\lambda=1^+}I_\lambda
=c\int_{\partial\Omega}\bigl(x_0\cdot\nu\bigr)\mathcal A(\nu)\left(\frac{u}{d^s}\right)^2dx_0,\]
which is exactly what we wanted to prove.
\end{proof}

\section{Non-star-shaped domains and proof of Theorem \ref{thpoh}} \label{sec8}

In this section we prove Proposition \ref{intpartsB} for general $C^{1,1}$ domains.

The key idea, as in \cite{RS-Poh}, is that every $C^{1,1}$ domain is locally star-shaped, in the sense that its intersection with any small ball is star-shaped with respect to some point.
To exploit this, we use a partition of unity to split the function $u$ into a set of functions $u_1$, ..., $u_m$, each one with support in a small ball.
Using this, we will prove a bilinear version of the identity, namely
\begin{equation}\label{bilinear}\begin{split}
\int_\Omega(x\cdot\nabla &u_1)Lu_2\, dx+\int_\Omega(x\cdot\nabla u_2)Lu_1\,dx=\frac{2s-n}{2}\int_\Omega u_1Lu_2\, dx+\\
&+\frac{2s-n}{2}\int_{\Omega}u_2Lu_1\, dx-\Gamma(1+s)^2\int_{\partial\Omega}\mathcal A(\nu)\frac{u_1}{d^{s}}\frac{u_2}{d^{s}}(x\cdot\nu)\, d\sigma.
\end{split}\end{equation}

The following lemma states that this bilinear identity holds whenever the two functions $u_1$ and $u_2$ have disjoint compact supports.
In this case, the last term in the previous identity equals 0, and since $L u_i$ is evaluated only outside the support of $u_i$, we only need to require $\nabla u_i\in L^1(\R^n)$.

\begin{lem}\label{duesboles} Let $u_1$ and $u_2$ be $W^{1,1}(\R^n)$ functions with disjoint compact supports $K_1$ and $K_2$.
Then,
\[\begin{split}\int_{K_1}(x\cdot\nabla u_1)Lu_2\, dx&+\int_{K_2}(x\cdot\nabla u_2)Lu_1\,dx=\\
&=\frac{2s-n}{2}\int_{K_1}u_1Lu_2\, dx+\frac{2s-n}{2}\int_{K_2}u_2Lu_1\, dx.\end{split}\]
\end{lem}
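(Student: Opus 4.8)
The plan is to exploit the fact that $u_1$ and $u_2$ have disjoint supports, so that for $x$ in $K_1$ the value $Lu_2(x)$ is given by a convergent integral with no singularity: since $u_2(x)=0$ and $u_2(x\pm y)$ only contributes when $x\pm y\in K_2$, which forces $|y|\geq{\rm dist}(K_1,K_2)>0$, we have
\[
Lu_2(x)=-\int_{\R^n}\bigl(u_2(x+y)+u_2(x-y)\bigr)\frac{a(y/|y|)}{|y|^{n+2s}}\,dy
=-2\int_{\R^n}u_2(z)\,K(x-z)\,dz,
\]
where $K(y)=a(y/|y|)|y|^{-n-2s}$ is even, using the change $z=x+y$ together with $z=x-y$ and the evenness of $a$. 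Thus $Lu_2$ restricted to $K_1$ (and symmetrically $Lu_1$ on $K_2$) is a smooth, integrable function obtained by convolving an $L^1$ function against the $C^\infty$ away-from-origin, homogeneous-of-degree-$(-n-2s)$ kernel $K$. Consequently all the integrals in the statement are absolutely convergent, which lets me manipulate them freely by Fubini.

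First I would write the left-hand side using the formula above:
\[
\int_{K_1}(x\cdot\nabla u_1)Lu_2\,dx
=-2\int_{K_1}\int_{K_2}(x\cdot\nabla u_1(x))\,u_2(z)\,K(x-z)\,dz\,dx,
\]
and similarly for the second term with the roles of $1$ and $2$ interchanged. Adding the two and writing everything as a single double integral over $K_1\times K_2$, the integrand becomes
\[
-2\Bigl[(x\cdot\nabla u_1(x))\,u_2(z)+(z\cdot\nabla u_2(z))\,u_1(x)\Bigr]K(x-z).
\]
The next step is an integration by parts in both variables simultaneously. Since $u_1\in W^{1,1}$ with compact support $K_1$, integrating by parts in $x$ moves the gradient off $u_1$: $\int (x\cdot\nabla u_1)\,\psi\,dx = -\int u_1\,{\rm div}(x\psi)\,dx = -\int u_1(n\psi + x\cdot\nabla_x\psi)\,dx$ for a test-function $\psi$; here $\psi(x)=u_2(z)K(x-z)$, and the same for the $z$-integration. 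The crucial cancellation is that $K$ is homogeneous of degree $-n-2s$, so $(x-z)\cdot\nabla K(x-z)=(-n-2s)K(x-z)$; hence when the two integrations by parts are combined, the terms $x\cdot\nabla_x K(x-z)$ and $z\cdot\nabla_z K(x-z) = -z\cdot(\nabla K)(x-z)$ add up to $(x-z)\cdot(\nabla K)(x-z)=(-n-2s)K(x-z)$, producing exactly the factor $2s-n$ after collecting the $-n$ from each ${\rm div}$. Carrying out this bookkeeping, the combined integrand becomes $-2\cdot\frac{2s-n}{2}\,u_1(x)u_2(z)K(x-z)\cdot(\text{sign/combinatorial factor})$, which upon undoing the convolution and returning to the $Lu_i$ notation gives precisely $\frac{2s-n}{2}\int_{K_1}u_1Lu_2 + \frac{2s-n}{2}\int_{K_2}u_2Lu_1$, as claimed. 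There is no boundary term in either integration by parts because $u_1,u_2$ have compact support and $K(x-z)$ is smooth on the region $x\in K_1$, $z\in K_2$.

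The step I expect to be the main (though still routine) obstacle is justifying the integration by parts rigorously when $u_i$ are merely $W^{1,1}$: one cannot differentiate $u_i$ pointwise, so I would first mollify $u_i$ by $u_i^\varepsilon = u_i * \rho_\varepsilon$, for which all manipulations are classical, and then pass to the limit $\varepsilon\to 0$ using that $u_i^\varepsilon\to u_i$ in $W^{1,1}$, that the supports stay within a fixed neighbourhood of $K_i$ (still disjoint for $\varepsilon$ small), and that $K(x-z)$ together with $x\cdot\nabla_x K$, $z\cdot\nabla_z K$ are bounded and smooth on that region — this makes every integrand dominated uniformly in $\varepsilon$, so dominated convergence applies. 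The rest is elementary: tracking the homogeneity identity for $K$ and collecting the numerical constant $\frac{2s-n}{2}$. I would also remark that the same argument works verbatim for operators of the form \eqref{L-singular} with the kernel replaced by the corresponding singular measure, since again the kernel is never evaluated near the origin.
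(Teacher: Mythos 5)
Your argument is correct, and it takes a genuinely different route from the paper. The paper first decomposes $L$ into one-dimensional operators, $Lw=c_s\int_{S^{n-1}}(-\partial_{\theta\theta})^s w\,d\mu(\theta)$, proves the commutator identity $L(x\cdot\nabla u_i)=x\cdot\nabla Lu_i+2s\,Lu_i$ in $\R^n\setminus K_i$ by a computation along each ray, and then combines this with the bilinear integration-by-parts formula $\int_{K_1}w_1Lw_2=\int_{K_2}w_2Lw_1$. You instead write $Lu_2|_{K_1}$ and $Lu_1|_{K_2}$ directly as convolutions against the kernel $K(y)=a(y/|y|)|y|^{-n-2s}$ (legitimate since $|x-z|\ge{\rm dist}(K_1,K_2)>0$), symmetrize the resulting double integral over $K_1\times K_2$, integrate by parts in both variables, and invoke Euler's identity $(x-z)\cdot\nabla K(x-z)=-(n+2s)K(x-z)$; I checked the bookkeeping and the constants come out right ($2n$ from the two divergences minus $n+2s$ gives $n-2s$, matching the right-hand side). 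Your route is shorter and more self-contained. What the paper's route buys is generality: the $1$-D decomposition only ever differentiates the radial kernel $|\tau|^{-1-2s}$, so it applies to arbitrary spectral measures $\mu$ as in \eqref{L-singular}, whereas your intermediate terms $x\cdot\nabla_xK(x-z)$ and $z\cdot\nabla_zK(x-z)$ involve the full (non-radial) gradient of $K$, which requires $a$ to be differentiable on $S^{n-1}$; only their sum collapses to the radial derivative. This is harmless where the lemma is actually used (Section 2 reduces everything to $a\in C^\infty(S^{n-1})$), but your closing remark that the argument works ``verbatim'' for \eqref{L-singular} is too quick --- for a genuinely singular $\mu$ you would need to either differentiate only radially (e.g.\ via the scaling $u_i^\lambda(x)=u_i(\lambda x)$ and the homogeneity of $K$ under the joint dilation of $x$ and $z$) or fall back on the paper's directional decomposition.
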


\begin{proof}
Notice first that
\begin{equation}\label{agur}
Lw(x)=c_s\int_{S^{n-1}}(-\partial_{\theta\theta})^sw(x)d\mu(\theta),
\end{equation}
see e.g. formula~(2.2) and Lemma 2.1 in \cite{RS-K}.

We claim that, for each $\theta\in S^{n-1}$,
\begin{equation}\label{idcabre-1}
(-\partial_{\theta\theta})^s(x\cdot\nabla u_i)=x\cdot\nabla(-\partial_{\theta\theta})^su_i+2s(-\partial_{\theta\theta})^su_i\qquad \mbox{in}\ \  \R^n\backslash K_i.
\end{equation}
Indeed, fix $\theta\in S^{n-1}$ and fix $x_0\in \{x+\tau\theta\,:\,\tau\in \R\}$.
Let $\tau_1$ be such that $x_0+\tau_1\theta=x$.
Then, using that $u_i\equiv0$ in $\R^n\setminus K_i$, for each $x\in \R^n\backslash K_i$ we have
\[\begin{split}
(-\partial_{\theta\theta})^s(x\cdot\nabla u_i)(x)&=
c_{1,s}\int_{x_0+\tau\theta \in K_i}\frac{-(x_0+\tau\theta)\cdot\nabla u_i(x_0+\tau\theta)}{|\tau-\tau_1|^{1+2s}}\,d\tau\\
&=c_{1,s}\int_{x_0+\tau\theta \in K_i}\frac{(\tau-\tau_1)\theta\cdot\nabla u_i(x_0+\tau\theta)}{|\tau-\tau_1|^{1+2s}}\,d\tau\\
&\qquad\qquad\qquad\qquad+c_{1,s}\int_{x_0+\tau\theta \in K_i}\frac{-(x_0+\tau_1\theta)\cdot\nabla u_i(x_0+\tau\theta)}{|\tau-\tau_1|^{1+2s}}\,d\tau\\
&=c_{1,s}\int_{x_0+\tau\theta \in K_i}\partial_\tau\left(\frac{\tau_1-\tau}{|\tau-\tau_1|^{1+2s}}\right)u_i(y)d\tau+x\cdot(-\partial_{\theta\theta})^s\nabla u_i(x)\\
&=c_{1,s}\int_{x_0+\tau\theta \in K_i}\frac{-2s}{|\tau-\tau_1|^{1+2s}}\,u_i(y)d\tau+x\cdot\nabla(-\partial_{\theta\theta})^su_i(x)\\
&=2s(-\partial_{\theta\theta})^su_i(x)+x\cdot\nabla(-\partial_{\theta\theta})^su_i(x),
\end{split}\]
as claimed.

Therefore, using \eqref{idcabre-1} and \eqref{agur}, we find
\begin{equation}\label{idcabre}
L(x\cdot\nabla u_i)=x\cdot\nabla Lu_i+2s\,Lu_i\qquad \mbox{in}\ \  \R^n\backslash K_i.
\end{equation}

We also note that for all functions $w_1$ and $w_2$ in $L^1(\R^n)$ with disjoint compact supports $W_1$ and $W_2$, it holds the integration by parts formula
\begin{equation}\label{above}
\int_{W_1}w_1Lw_2=\int_{W_1}\int_{W_2}\frac{-w_1(x)w_2(y)}{|x-y|^{n+2s}}a\left(\frac{x-y}{|x-y|}\right)dy\,dx
=\int_{W_2}w_2Lw_1.
\end{equation}

Now, integrating by parts,
\[\int_{K_1}(x\cdot \nabla u_1)Lu_2= -n\int_{K_1}u_1Lu_2-\int_{K_1}u_1x\cdot\nabla Lu_2.\]
Next we apply \eqref{idcabre} and \eqref{above} to $w_1=u_1$ and $w_2=x\cdot \nabla u_2$.
We obtain
\[\begin{split}
\int_{K_1}u_1x\cdot\nabla Lu_2&=\int_{K_1}u_1L(x\cdot \nabla u_2)-2s\int_{K_1}u_1Lu_2\\
&=\int_{K_2}Lu_1(x\cdot \nabla u_2)-2s\int_{K_1}u_1Lu_2.
\end{split}\]
Hence,
\[\int_{K_1}(x\cdot \nabla u_1)Lu_2=-\int_{K_2}Lu_1(x\cdot \nabla u_2)+(2s-n)\int_{K_1}u_1Lu_2.\]

Finally, again by the integration by parts formula \eqref{above} we find
\[\int_{K_1}u_1Lu_2=\frac12\int_{K_1}u_1Lu_2+\frac12\int_{K_2}u_2Lu_1,\]
and the lemma follows.
\end{proof}

The second lemma states that the bilinear identity \eqref{bilinear} holds whenever the two functions $u_1$ and $u_2$ have compact supports in a ball $B$ such that $\Omega\cap B$ is star-shaped with respect to some point $z_0$ in $\Omega\cap B$.

\begin{lem}\label{unabola}
Let $\Omega$ be a bounded $C^{1,1}$ domain, and let $B$ be a ball in $\R^n$.
Assume that there exists $z_0\in \Omega\cap B$ such that
\[(x-z_0)\cdot\nu(x)>0\qquad\mbox{for all}\ x\in\partial\Omega\cap \overline B.\]
Let $u$ be a function satisfying the hypothesis of Proposition \ref{intpartsB}, and let $u_1=u\eta_1$ and $u_2=u\eta_2$, where $\eta_i\in C^\infty_c(B)$, $i=1,2$.
Then, the following identity holds
\[\int_B(x\cdot\nabla u_1)Lu_2\, dx+\int_B(x\cdot\nabla u_2)Lu_1\,dx=\frac{2s-n}{2}\int_Bu_1Lu_2\, dx+\]
\[+\frac{2s-n}{2}\int_{B}u_2Lu_1\, dx
-\Gamma(1+s)^2\int_{\partial\Omega\cap B}\mathcal A(\nu)\frac{u_1}{d^{s}}\frac{u_2}{d^{s}}(x\cdot\nu)\, d\sigma.\]
\end{lem}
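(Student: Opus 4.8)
The plan is to reduce Lemma \ref{unabola} to the star-shaped case already treated (Proposition \ref{intpartsB} for strictly star-shaped domains) by a translation and a scaling argument, combined with Lemma \ref{duesboles} to handle the ``cross terms'' coming from the far-away behavior of the operator. First I would translate coordinates so that $z_0=0$, which is harmless since the asserted identity is translation-covariant in the appropriate sense (the factor $x\cdot\nu$ becomes $(x-z_0)\cdot\nu$ but the proof of the star-shaped case only used star-shapedness about the origin, and the general $z_0$ case follows by the reduction in \cite[Section 2]{RS-Poh}). The point is that $\Omega\cap B$ is star-shaped about $z_0=0$ on the portion of $\partial\Omega$ lying in $\overline B$, and the cutoffs $\eta_i$ localize $u_1,u_2$ to $B$, so only this portion of the boundary is relevant.

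The key step is to polarize the star-shaped identity. Applying Proposition \ref{intpartsB} (star-shaped case) to $u_1+u_2$, to $u_1$, and to $u_2$ — all of which are supported in $B$ where star-shapedness holds — and subtracting, one gets exactly the bilinear identity \eqref{bilinear} localized to $B$, \emph{provided} the quantities $L(u_1+u_2)$, $Lu_1$, $Lu_2$ are bounded in $\Omega\cap B$ and the functions satisfy the hypotheses of the star-shaped version. Here the main obstacle appears: $u_i=u\eta_i$ need not satisfy $Lu_i$ bounded \emph{globally} in $\Omega$, nor do they satisfy the interior estimates (b) of Proposition \ref{intpartsB} away from $B$, because multiplying by a cutoff destroys the equation $Lu=g$. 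The resolution is that one only needs these properties \emph{inside $B$}: since $u_i$ agrees with $u$ on a neighborhood of $\mathrm{supp}\,\eta_i$ and differs from $u$ only by a smooth factor, the quotient $u_i/d^s$ is still $C^\gamma$ up to $\partial\Omega\cap B$, the gradient bound $|\nabla u_i|\le Cd^{s-1}$ still holds near $\partial\Omega\cap B$, and $Lu_i$ is bounded near $\partial\Omega\cap B$; and the singular-boundary analysis of Section \ref{secs/2} (Theorem \ref{thlaps/2}) is purely local, so it applies to $u_i$ near each boundary point of $\partial\Omega\cap B$ with $(u_i/d^s)(x_0)=\eta_i(x_0)(u/d^s)(x_0)$.

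Thus the plan is: (1) write $u_1+u_2 = u(\eta_1+\eta_2)$ and note $\eta_1+\eta_2\in C_c^\infty(B)$, so all three functions are of the required local type; (2) run the proof of the star-shaped case of Proposition \ref{intpartsB} for each of $u_1$, $u_2$, $u_1+u_2$ — the only inputs are the boundary expansion of $L^{1/2}(\cdot)$ from Theorem \ref{thlaps/2} and Proposition \ref{propoperador}, both local — obtaining three identities whose boundary integrals are over $\partial\Omega\cap B$ only (outside $\overline B$ the functions vanish near the boundary, contributing nothing); (3) subtract to isolate \eqref{bilinear}, using $\big(\tfrac{u_1+u_2}{d^s}\big)^2 - \big(\tfrac{u_1}{d^s}\big)^2 - \big(\tfrac{u_2}{d^s}\big)^2 = 2\tfrac{u_1}{d^s}\tfrac{u_2}{d^s}$ on $\partial\Omega\cap B$ and the corresponding bilinear identity $\int (x\cdot\nabla(u_1+u_2))L(u_1+u_2) = \sum \int(x\cdot\nabla u_i)Lu_i + \int(x\cdot\nabla u_1)Lu_2 + \int(x\cdot\nabla u_2)Lu_1$ together with the symmetric identity $\int u_1Lu_2=\int u_2Lu_1$ (which follows from the symmetry of the kernel of $L$, as used in \eqref{above}). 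The cross-terms where $Lu_2$ is evaluated on $\mathrm{supp}\,u_1$ at points far from $\partial\Omega$ — or more precisely the contributions requiring only $W^{1,1}$ regularity and disjoint supports — are controlled by Lemma \ref{duesboles}, which is exactly why that lemma was isolated. I expect the polarization bookkeeping to be routine; the genuinely delicate point, already resolved by the locality of Theorem \ref{thlaps/2}, is ensuring that cutting off $u$ by $\eta_i$ does not spoil the boundary expansion of $L^{1/2}u_i$, and this is handled by observing that $u_i - \eta_i(x_0)u$ vanishes to higher order than $d^s$ at $x_0\in\partial\Omega$, so it contributes only to the $C^\gamma$ remainder $h$.
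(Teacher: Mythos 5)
Your overall skeleton --- polarize the strictly star-shaped case of Proposition \ref{intpartsB} by applying it to $u_1$, $u_2$ and $u_1+u_2=u(\eta_1+\eta_2)$ and subtracting, with the boundary integral localizing to $\partial\Omega\cap B$ because the $u_i/d^s$ vanish on the rest of the boundary --- is exactly the paper's route (via Lemma 5.2 of \cite{RS-Poh}). But you misidentify the one substantive verification, and your proposed fix opens a genuine gap. You assert that $Lu_i=L(u\eta_i)$ ``need not'' be bounded globally in $\Omega$ and that the interior estimates (b) fail away from $B$, and you propose to salvage the argument by invoking only the \emph{local} character of Theorem \ref{thlaps/2} near $\partial\Omega\cap B$. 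The premise is false and the fix is insufficient. Hypothesis (c) of Proposition \ref{intpartsB} does not ask that $u_i$ solve an equation, only that $Lu_i$ be bounded in $\Omega$, and this holds \emph{everywhere} by the product rule
\[
L(u\eta)=\eta\,Lu+u\,L\eta-I_L(u,\eta),
\]
with $I_L$ as in \eqref{I_L}: the first term is bounded because $Lu$ is, the second because $\eta\in C^\infty_c(B)$, and the third because $u\in C^s(\R^n)$ and $\eta$ is Lipschitz, so the integrand is $O(|y|^{1+s}/|y|^{n+2s})$ near the origin, which is integrable since $s<1$. Likewise (a) and (b) for $u\eta$ are immediate from the smoothness of $\eta$. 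This computation is the actual content of the proof, and it is absent from your write-up.

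The gap matters because the star-shaped identity, as proved in Section \ref{sec2}, is \emph{not} a purely local statement: it uses the global integration by parts $\int_{\R^n}u_\lambda\,Lu=\lambda^{(2s-n)/2}\int_{\R^n}w_{\sqrt\lambda}w_{1/\sqrt\lambda}$ with $w=L^{1/2}u$, the interior bounds (b) throughout $\Omega$ to verify condition (ii) of Proposition \ref{propoperador}, and the decay of $L^{1/2}u$ far from the support to verify (iii). If you genuinely only had control of $Lu_i$ near $\partial\Omega\cap B$, you would have to re-prove a localized version of Proposition \ref{intpartsB}, which is nowhere available and is substantially more work than the three-line product-rule check above. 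A secondary inaccuracy: Lemma \ref{duesboles} plays no role inside the proof of Lemma \ref{unabola}; it enters only later, when the partition-of-unity pieces with \emph{disjoint} supports are recombined in the proof of Proposition \ref{intpartsB} for general domains.
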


\begin{proof}
The proof is exactly the same as Lemma 5.2 in \cite{RS-Poh}.
One only has to check that for all $\eta\in C^\infty_c(B)$, and letting $\tilde u=u\eta$, then the function $\tilde u$ satisfies the hypotheses of Proposition \ref{intpartsB}.

Hypotheses (a) and (b) are immediate to check, since $\eta$ is smooth.
So, we only have to check that $L\tilde u$ is bounded.
But
\[L(u\eta)=\eta Lu+uL\eta-I_L(u,\eta),\]
where $I_L$ is given by \eqref{I_L}.
The first term is bounded because $Lu$ is bounded.
The second term is bounded since $\eta\in C^\infty_c(B)$.
The third term is bounded because $u\in C^s(\R^n)$ and $\eta\in \textrm{Lip}(\R^n)$.
Thus, the lemma is proved.
\end{proof}

We now give the

\begin{proof}[Proof of Proposition \ref{intpartsB}]\label{intparts:page}
As in \cite{RS-Poh}, the result follows from Lemmas \ref{unabola} and \ref{duesboles}.
We omit the details of this proof because it is exactly the same as in \cite{RS-Poh}.
\end{proof}

Hence, recalling the result in Section \ref{approx}, Proposition \ref{intparts} is proved.

Finally, as in \cite{RS-Poh}, the other results follow from Proposition \ref{intparts}.

\begin{proof}[Proof of Theorem \ref{thpoh}]
The first identity follows immediately from Proposition \ref{intparts} and the results in \cite{RV}.
The second identity follows from the first one by applying it with two different origins; see \cite{RS-Poh} for more details.
\end{proof}

\begin{proof}[Proof of Corollary \ref{corpoh}]
The result follows immediately from the first identity in Theorem \ref{thpoh}.
\end{proof}

\begin{proof}[Proof of Corollary \ref{corintparts}]
Applying Proposition \ref{intparts} with two different origins, we find that
\[\int_\Omega w_{x_i}Lw\,dx=-\frac{\Gamma(1+s)^2}{2}\int_{\partial\Omega}\mathcal A(\nu)\left(\frac{w}{d^{s}}\right)^2\nu_i\,d\sigma\]
whenever $w$ satisfies the hypotheses of the Proposition.
Then, the result follows by applying this identity with $w=u+v$ and $w=u-v$, and subtracting the two identities.
\end{proof}

\section{Applications of the identities} \label{sec9}

We give here some consequences of our identities.

A typical application of Pohozaev-type identities is the nonexistence of solutions to $Lu=u^p$, with $p\geq \frac{n+2s}{n-2s}$.
For supercritical powers $p>\frac{n+2s}{n-2s}$, the nonexistence of bounded solutions was already known, since it follows from the results in
\cite{RS-nonexistence}.
For the critical nonlinearity $f(u)=u^{\frac{n+2s}{n-2s}}$, the nonexistence of \emph{bounded} positive solutions follows directly from
Corollary \ref{corpoh} (see \cite{RS-Poh}), and hence the nonexistence of all positive solutions follows combining this with the following result, which we prove here.

\begin{prop}\label{L^infty}
Let $\Omega$ be any bounded domain, and $f(x,u)$ be such that
\begin{equation}\label{9dsf}
|f(x,u)|\le C_0\left(1+|u|^{\frac{n+2s}{n-2s}}\right).
\end{equation}
Let $L$ be any operator of the form \eqref{L}-\eqref{ellipt}, and $u$ be any weak solution of \eqref{eq}.

Then
\begin{equation}\label{u-L^infty}
\|u\|_{L^\infty(\Omega)}\le C,
\end{equation}
for some $C>0$ depending only on $n$, $s$, $C_0$, ellipticity constants, and $\|u\|_{H^s_{\mu}(\R^n)}$.
\end{prop}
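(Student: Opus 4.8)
The plan is to run a bootstrap (Moser-type iteration / Brezis--Kato argument) adapted to the nonlocal energy space $H^s_\mu(\R^n)$ associated to $L$. The starting point is that $u\in H^s_\mu(\R^n)$ is a weak solution, so $u\in L^{2^*}(\Omega)$ with $2^*=\frac{2n}{n-2s}$ by the fractional Sobolev embedding (which holds for $H^s_\mu$ thanks to the lower ellipticity bound in \eqref{ellipt}, since the quadratic form of $L$ controls that of $c\lambda(-\Delta)^s$). First I would fix the critical exponent and note that the growth assumption \eqref{9dsf} gives $|f(x,u)|\le C_0(1+|u|^{p})$ with $p=\frac{n+2s}{n-2s}=2^*-1$, which is exactly the borderline case where the naive estimate does not immediately close; this is the reason an iteration is needed rather than a one-step argument.

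The key steps, in order: (1) For $\beta\ge 1$ and $T>0$, test the weak formulation of $Lu=f(x,u)$ against the truncated power $\varphi=u\,|u_T|^{2(\beta-1)}$, where $u_T=\mathrm{sign}(u)\min\{|u|,T\}$; the nonlocal analogue of the inequality $\langle Lu,\varphi\rangle\ge c\,\lambda\,[\,|u_T|^{\beta-1}u\,]_{H^s}^2$ (a convexity inequality for the kernel, as in Brezis--Kato and its fractional versions) gives control of $\||u_T|^{\beta-1}u\|_{L^{2^*}}^2$ by $\beta$ times $\int_\Omega (1+|u|^{p})\,|u|\,|u_T|^{2(\beta-1)}\,dx$. (2) Split the right-hand side: the term $\int |u|^{2}|u_T|^{2(\beta-1)}$ is harmless, while the critical term $\int |u|^{p+1}|u_T|^{2(\beta-1)}$ is split over $\{|u|\le M\}$ and $\{|u|>M\}$; on the first set it is bounded by $M^{p-1}\int |u|^{2}|u_T|^{2(\beta-1)}$, and on the second set one uses Hölder with exponents $\frac{2^*}{p-1}$ and its conjugate, so that the factor $\big(\int_{\{|u|>M\}}|u|^{2^*}\big)^{(p-1)/2^*}$ can be made smaller than $1/(2C)$ by choosing $M$ large (here only the finiteness of $\|u\|_{H^s_\mu}$, hence of $\|u\|_{L^{2^*}}$, is used, and $M$ depends only on the allowed quantities). (3) Absorb that small term into the left-hand side; this yields $\||u|^{\beta}\|_{L^{2^*}} \le C(\beta)\,\|\,|u|^{\beta}\,\|_{L^{2}}$ up to lower-order terms, with $C(\beta)$ polynomial in $\beta$. (4) Let $T\to\infty$ by monotone convergence to replace $u_T$ by $u$, obtaining $u\in L^{2^*\beta}$ whenever $u\in L^{2\beta}$; then iterate with $\beta_{k+1}=\tfrac{2^*}{2}\beta_k$, starting from $\beta_0=1$, and track the constants to get $\|u\|_{L^{2\beta_k}}\le \prod_k C(\beta_k)^{1/\beta_k}\cdot \|u\|_{L^{2^*}}$ with the product convergent since $\sum_k \frac{\log\beta_k}{\beta_k}<\infty$. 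Letting $k\to\infty$ gives \eqref{u-L^infty}.

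The main obstacle I anticipate is Step~(1): establishing the correct \emph{nonlocal} coercivity/convexity inequality
\[
\int_{\R^n}\!\!\int_{\R^n} \bigl(u(x)-u(y)\bigr)\bigl(\varphi(x)-\varphi(y)\bigr)\,\frac{a\!\left(\tfrac{x-y}{|x-y|}\right)}{|x-y|^{n+2s}}\,dx\,dy \;\ge\; \frac{c}{\beta}\,[\,|u_T|^{\beta-1}u\,]^2_{H^s}
\]
for the test function $\varphi=u|u_T|^{2(\beta-1)}$, with a constant independent of the truncation $T$. This requires a pointwise algebraic inequality of the type $(a-b)\big(a|a_T|^{2(\beta-1)}-b|b_T|^{2(\beta-1)}\big)\ge \frac{c}{\beta}\big(\Psi(a)-\Psi(b)\big)^2$ where $\Psi$ is the primitive of $t\mapsto |t_T|^{\beta-1}$, together with the lower ellipticity bound from \eqref{ellipt} to pass from the $a(\cdot)$-weighted Gagliardo form to the standard one and then apply fractional Sobolev. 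Once this inequality is in hand the rest is the bookkeeping of constants in the iteration, which is routine; also routine but worth stating is that $f(x,u)$ is, a posteriori, bounded once $u$ is, closing the argument and giving the stated dependence of the constant.
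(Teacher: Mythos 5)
Your overall strategy is the same Brezis--Kato iteration that the paper uses, and your ``main obstacle'' is resolved exactly as you anticipate: the paper's Lemma~\ref{useful-ineq} is precisely the pointwise inequality $\bigl|a_T^\beta a-b_T^\beta b\bigr|^2\le C(a-b)\bigl(a_T^{2\beta}a-b_T^{2\beta}b\bigr)$, which combined with the two-sided comparison of the $a(\cdot)$-weighted form with $[\cdot]_{H^s}$ (Lemma~\ref{lem-H^s}) gives your coercivity step. The splitting of the critical term and the passage $T\to\infty$ also match the paper (the paper splits according to where $\alpha(x)=|f(x,u)|/(1+|u|)\in L^{n/2s}$ is large, rather than where $|u|$ is large, but this is cosmetic).

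There is, however, a genuine gap in your Steps~(3)--(4). You claim the constant $C(\beta)$ in $\|\,|u|^\beta\|_{L^{2^*}}\le C(\beta)\|\,|u|^\beta\|_{L^2}$ is polynomial in $\beta$, so that the Moser product $\prod_k C(\beta_k)^{1/\beta_k}$ converges and you reach $L^\infty$ directly. In the critical case this is not justified: to absorb the borderline term you must choose the cutoff level $M=M(\beta)$ so large that
\[
C(\beta)\left(\int_{\{|u|>M\}}|u|^{2^*}\,dx\right)^{(p-1)/2^*}\le \tfrac12 ,
\]
and since the coercivity constant grows with $\beta$ and the tail $\int_{\{|u|>M\}}|u|^{2^*}$ tends to $0$ with \emph{no quantitative rate} (it is controlled only by absolute continuity of the integral, not by $\|u\|_{L^{2^*}}$), the resulting factor $M(\beta)^{p-1}$ entering $C(\beta)$ is not polynomial in $\beta$ --- indeed it is not controlled at all, and moreover depends on $u$ itself rather than only on its norm. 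So the product need not converge, and the iteration only yields $u\in L^p(\Omega)$ for every finite $p$. This is exactly where the paper stops the iteration: it then invokes the $L^p\to L^\infty$ estimate for the Dirichlet problem (Proposition~\ref{prop-FR}, from \cite{FR}) applied to $g=f(x,u)$, which lies in $L^q$ for some $q>n/2s$ by \eqref{9dsf} once $u\in L^p$ for $p$ large. To repair your argument you need either this final elliptic-regularity step or some substitute for it; the pure Moser product does not close at critical growth.
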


\begin{rem}
Here, we say that $u$ is a weak solution of \eqref{eq} if $u\equiv0$ in $\Omega^c$,
\[\|u\|_{H^s_\mu(\R^n)}^2:=\int_{\R^n}\int_{S^{n-1}}\int_{-\infty}^\infty \bigl(u(x)-u(x+r\theta)\bigr)^2\frac{dr}{|r|^{1+2s}}\,d\mu(\theta)\,dx\]
is finite, and
\[\int_{\R^n}\int_{S^{n-1}}\int_{-\infty}^\infty \bigl(u(x)-u(x+r\theta)\bigr)^2\frac{dr}{|r|^{1+2s}}\,d\mu(\theta)\,dx=\int_\Omega f(x,u)\eta\,dx\]
for all $\eta\in C^\infty_c(\Omega)$.

By Lemma \ref{lem-H^s} below, the norm $\|u\|_{H^s_\mu(\R^n)}$ is equivalent to $\|u\|_{H^s(\R^n)}$, so that weak solutions belong to $H^s$.
\end{rem}

Another consequence of Corollary \ref{corpoh} and Proposition \ref{L^infty} is the following unique continuation principle.
Recall that a nonlinearity $f(u)$ is said to be subcritical if
\begin{equation}\label{subcritical}
t\,f(t)<\frac{n-2s}{2n}\int_0^t f
\end{equation}
for all $t\neq0$.

\begin{cor}\label{cor-uniquecont}
Let $s\in(0,1)$, and assume that $L$ and $\Omega$ satisfy \eqref{A1}.

Let $f$ be any locally Lipschitz function, and $u$ be any weak solution of \eqref{eq2}.
Assume in addition that $f(u)$ is subcritical, in the sense that \eqref{subcritical} holds.

Then, $u$ is bounded in $\Omega$, $u/d^s$ is H\"older continuous up to the boundary, and the following unique continuation principle holds:
\[\left.\frac{u}{d^s}\right|_{\partial\Omega}\equiv0\quad \textrm{on}\quad \partial\Omega\qquad \Longrightarrow\qquad u\equiv0\quad \textrm{in}\quad \Omega.\]
Here, $u/d^s$ on $\partial\Omega$ has to be understood as a limit (as in Theorem \ref{thpoh}).
\end{cor}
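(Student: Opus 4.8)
The plan is to combine the boundedness result Proposition \ref{L^infty}, the regularity statements already available, and the Pohozaev identity in Corollary \ref{corpoh}. First I would observe that the subcriticality condition \eqref{subcritical}, namely $t\,f(t)<\frac{n-2s}{2n}\int_0^t f$ for $t\neq 0$, combined with \eqref{9dsf} (which holds automatically since $f$ is locally Lipschitz and, by an elementary argument, subcriticality forces at most critical growth of $f$ at infinity), lets us apply Proposition \ref{L^infty} to conclude $u\in L^\infty(\Omega)$. Once $u$ is bounded, Theorem \ref{thpoh} applies: $u/d^s$ extends to a $C^\gamma(\overline\Omega)$ function, $|\nabla u|\le C d^{s-1}$, and both Pohozaev identities hold. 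In particular $u$ satisfies the hypotheses needed for Corollary \ref{corpoh}.

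The heart of the argument is then the identity from Corollary \ref{corpoh}:
\[
\int_{\Omega}\bigl\{2nF(u)-(n-2s)u\,f(u)\bigr\}\,dx=\Gamma(1+s)^2\int_{\partial\Omega}\mathcal A(\nu)\left(\frac{u}{d^s}\right)^2(x\cdot\nu)\,d\sigma,
\]
where $F(t)=\int_0^t f$. Suppose now that $u/d^s|_{\partial\Omega}\equiv 0$. Then the right-hand side vanishes, so
\[
\int_{\Omega}\bigl\{2nF(u)-(n-2s)u\,f(u)\bigr\}\,dx=0.
\]
On the other hand, subcriticality \eqref{subcritical} says exactly that $2nF(t)-(n-2s)t f(t)>0$ for every $t\neq 0$: indeed $t f(t)<\frac{n-2s}{2n}\int_0^t f=\frac{n-2s}{2n}F(t)$ rearranges to $2n F(t)-(n-2s)tf(t)> 2nF(t)-(n-2s)\cdot\frac{n-2s}{2n}F(t)$, and more directly the inequality $(n-2s)tf(t)<\frac{(n-2s)^2}{2n}F(t)\le \ldots$; in any case the cleanest route is to note that \eqref{subcritical} with the sign of $t$ handled (using $f$ locally Lipschitz and $F(0)=0$, so $F(t)$ has the same sign as $t f$ near $0$ and more globally the quantity $2nF(t)-(n-2s)tf(t)$ is strictly positive for $t\neq0$). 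Hence the integrand $2nF(u(x))-(n-2s)u(x)f(u(x))$ is $\ge 0$ everywhere and $>0$ wherever $u(x)\neq 0$; since its integral is zero, we must have $u\equiv 0$ in $\Omega$.

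I expect the only genuinely delicate point to be verifying cleanly that subcriticality \eqref{subcritical} really does imply the pointwise positivity of $G(t):=2nF(t)-(n-2s)t f(t)$ for all $t\ne 0$ (not merely for $t$ of one sign), and that it forces the subcritical growth bound \eqref{9dsf} so that Proposition \ref{L^infty} is applicable — both are elementary one-variable facts but need the hypotheses $F(0)=0$ and $f$ locally Lipschitz to pin down signs near $t=0$. Everything else is a direct concatenation: Proposition \ref{L^infty} gives $L^\infty$; Theorem \ref{thpoh} gives the regularity of $u/d^s$ and validates Corollary \ref{corpoh}; the vanishing of the boundary term then kills the (strictly signed) interior integral, forcing $u\equiv 0$.
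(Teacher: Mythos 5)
Your proposal is correct and follows essentially the same route as the paper's proof: Proposition \ref{L^infty} for boundedness, Theorem \ref{krylov}/\ref{thpoh} for the regularity of $u/d^s$, and then Corollary \ref{corpoh} with vanishing boundary term together with the strict sign of $2nF(t)-(n-2s)t\,f(t)$. The ``delicate point'' you flag is really a misprint in the paper: the subcriticality condition \eqref{subcritical} should read $\frac{n-2s}{2n}\,t\,f(t)<\int_0^t f$ (as the nonexistence discussion for $f(u)=u^{\frac{n+2s}{n-2s}}$ confirms), and with that form the pointwise positivity $2nF(t)-(n-2s)t\,f(t)>0$ for $t\neq0$ is immediate, so no further one-variable argument is needed.
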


We next prove Proposition \ref{L^infty} and Corollary \ref{cor-uniquecont}.

To establish Proposition \ref{L^infty}, we will need the following.

\begin{lem}\label{lem-H^s}
Let $L$ be any operator of the form \eqref{L-singular}-\eqref{ellipt-singular}.
Then,
\[c[u]_{H^s(\R^n)}^2\leq \int_{\R^n}\int_{S^{n-1}}\int_{-\infty}^\infty \bigl(u(x)-u(x+r\theta)\bigr)^2\frac{dr}{|r|^{1+2s}}\,d\mu(\theta)dx\leq C[u]_{H^s(\R^n)}^2,\]
where the constants $c$ and $C$ depend only on the ellipticity constants in \eqref{ellipt-singular}.
\end{lem}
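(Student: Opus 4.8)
The final statement to prove is Lemma~\ref{lem-H^s}, the equivalence of the anisotropic Gagliardo-type seminorm with the standard fractional Sobolev seminorm $[u]_{H^s(\R^n)}^2 = \int\int |u(x)-u(y)|^2/|x-y|^{n+2s}\,dx\,dy$. The plan is to work on the Fourier side, where both quantities become weighted $L^2$ norms of $\widehat u$, and then reduce the claim to a pointwise comparison of the two Fourier multipliers.

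First I would compute the Fourier symbol of the anisotropic quadratic form. Writing $\theta\in S^{n-1}$ fixed and performing the one-dimensional inner integral, $\int_{-\infty}^{\infty}\bigl(u(x)-u(x+r\theta)\bigr)^2\frac{dr}{|r|^{1+2s}}$ has, after Plancherel in $x$, symbol $c_{1,s}\,|\xi\cdot\theta|^{2s}$ (this is exactly the computation behind \eqref{agur} and the identity $(-\partial_{\theta\theta})^s$ having symbol $c_{1,s}|\xi\cdot\theta|^{2s}$, as used in Lemma~\ref{lem-Fourier} and cited from \cite{RS-K}). Integrating against $d\mu(\theta)$ and using Tonelli, the middle quantity in the Lemma equals $c_{1,s}\int_{\R^n}\Bigl(\int_{S^{n-1}}|\xi\cdot\theta|^{2s}\,d\mu(\theta)\Bigr)|\widehat u(\xi)|^2\,d\xi$, i.e.\ it equals $\int_{\R^n}\mathcal{A}(\xi)|\widehat u(\xi)|^2 d\xi$ up to a constant, with $\mathcal A$ as in \eqref{A} (here with $a\,d\theta$ replaced by $d\mu$). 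Meanwhile $[u]_{H^s(\R^n)}^2 = c_{n,s}\int_{\R^n}|\xi|^{2s}|\widehat u(\xi)|^2\,d\xi$.

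So it suffices to prove that there are constants $0<c\le C<\infty$, depending only on the ellipticity constants in \eqref{ellipt-singular}, such that
\[
c\,|\xi|^{2s}\ \le\ \int_{S^{n-1}}|\xi\cdot\theta|^{2s}\,d\mu(\theta)\ \le\ C\,|\xi|^{2s}\qquad\textrm{for all }\xi\in\R^n.
\]
By homogeneity of degree $2s$ in $\xi$ on both sides, this reduces to the case $\xi=\nu\in S^{n-1}$. The upper bound is immediate: $|\nu\cdot\theta|^{2s}\le1$, so the integral is at most $\int_{S^{n-1}}d\mu\le\Lambda$. The lower bound is exactly the left inequality in the ellipticity hypothesis \eqref{ellipt-singular}, namely $\inf_{\nu\in S^{n-1}}\int_{S^{n-1}}|\nu\cdot\sigma|^{2s}\,d\mu(\sigma)\ge\lambda>0$. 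Multiplying the chain of inequalities by $|\widehat u(\xi)|^2$ and integrating over $\R^n$ then gives the Lemma, with $c,C$ absorbing the universal constants $c_{n,s}$ and $c_{1,s}$.

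The only genuine content is the symbol computation $\int_{-\infty}^{\infty}(u(x)-u(x+r\theta))^2|r|^{-1-2s}\,dr \ \widehat{}\ = c_{1,s}|\xi\cdot\theta|^{2s}|\widehat u(\xi)|^2$ together with Tonelli to interchange the $\theta$-integral with the Fourier/Plancherel identity; for the $L^\infty$ case of the spectral measure ($a\in L^\infty$) this is routine, and for general finite $\mu$ it is the same argument since everything in sight is nonnegative. I expect the main (minor) obstacle to be bookkeeping the constants and making the Tonelli interchange rigorous when $[u]_{H^s}=\infty$, which one handles by first proving the identity for $u\in C_c^\infty$ and then arguing by density/monotone convergence.
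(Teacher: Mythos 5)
Your proof is correct and follows essentially the same route as the paper: pass to the Fourier side via Plancherel, identify the middle quantity with $\int_{\R^n}\mathcal A(\xi)|\widehat u(\xi)|^2\,d\xi$, and use the two-sided bound $\lambda|\xi|^{2s}\le\mathcal A(\xi)\le\Lambda|\xi|^{2s}$ coming directly from the ellipticity hypothesis \eqref{ellipt-singular}. The paper's proof is just a terser version of this same argument; your added details (the symbol computation for the $r$-integral, the reduction by homogeneity, and the density/Tonelli remark) are all sound.
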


\begin{proof}
The result follows by writing each of the terms in the Fourier side.
Indeed, since the symbol of $L$ is $A(\xi)$, and it satisfies
\[\lambda|\xi|^{2s}\leq A(\xi)\leq \Lambda|\xi|^{2s},\]
then we have
\[c\int_{\R^n}|\xi|^{2s}|\hat u|^2d\xi\leq \int_{\R^n}A(\xi)|\hat u|^2d\xi\leq C\int_{\R^n}|\xi|^{2s}|\hat u|^2d\xi,\]
as desired.
\end{proof}

We will also need the following result, established in \cite{FR}.

\begin{prop}[\cite{FR}] \label{prop-FR}
Let $\Omega\subset \R^n$ be any bounded domain, and $L$ any operator of the form \eqref{L}-\eqref{ellipt}.
Let $u$ be any weak solution of
\[\left\{ \begin{array}{rcll}
L u&=&g& \textrm{in }\Omega\\
u&=&0& \textrm{in } \R^n\setminus \Omega,
\end{array}\right.\]
Then,
\begin{enumerate}
\item[(i)] If $1<p<\frac{n}{2s}$,
   \[\|u\|_{L^q(\Omega)} \leq C\|g\|_{L^p(\Omega)},~~~q= \frac{np}{n-2ps}.\]
\item[(ii)] If $\frac{n}{2s} < p < \infty$,
   \[ \|u\|_{L^\infty(\Omega)} \leq C\|g\|_{L^p(\Omega)}.\]
\end{enumerate}
The constant $C$ depends only on $n$, $s$, $p$, $\Omega$ and ellipticity constants.
\end{prop}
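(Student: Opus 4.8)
The plan is to prove both estimates by the energy method: a De Giorgi--Stampacchia truncation argument for (ii), and a Moser iteration for (i). I will use only two facts. First, by Lemma~\ref{lem-H^s}, the Dirichlet form of $L$,
\[
\mathcal E(v,w):=\tfrac12\int_{\R^n}\int_{\R^n}\bigl(v(x)-v(y)\bigr)\bigl(w(x)-w(y)\bigr)\,\frac{a\bigl((x-y)/|x-y|\bigr)}{|x-y|^{n+2s}}\,dx\,dy ,
\]
is comparable to $[\,\cdot\,]_{H^s(\R^n)}^2$, hence coercive on $\mathcal H:=\{v\in H^s(\R^n):\, v=0 \text{ a.e.\ in }\R^n\setminus\Omega\}$ (fractional Poincar\'e, as $\Omega$ is bounded). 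Second, the fractional Sobolev inequality $[w]_{H^s(\R^n)}^2\ge S\|w\|_{L^{2^\ast_s}(\R^n)}^2$, with $2^\ast_s=\tfrac{2n}{n-2s}$. A weak solution satisfies $\mathcal E(u,\phi)=\int_\Omega g\phi$ for all $\phi\in C_c^\infty(\Omega)$, and by density this extends to all $\phi\in\mathcal H$ with $g\phi\in L^1$; in particular truncations and truncated powers of $u$ are admissible test functions. (A shorter alternative would be to bound $|u|$ pointwise by the Riesz potential $I_{2s}(|g|\chi_\Omega)$ through the Green function of $L$ in $\Omega$, then invoke Hardy--Littlewood--Sobolev; I avoid this because it relies on sharp heat-kernel bounds for the anisotropic $L$, not developed here.)

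For (ii), assume $p>\tfrac n{2s}$, which forces $n>2s$; the degenerate cases $n\le 2s$ (only $n=1$) follow by running the same argument with the embedding $H^s(\R)\hookrightarrow L^r$, $r<\infty$ large. For $k\ge0$, test with $(u-k)_+\in\mathcal H$ and use the pointwise inequality $\bigl(u(x)-u(y)\bigr)\bigl((u-k)_+(x)-(u-k)_+(y)\bigr)\ge\bigl|(u-k)_+(x)-(u-k)_+(y)\bigr|^2$, together with coercivity and Sobolev, to get
\[
\|(u-k)_+\|_{L^{2^\ast_s}}^2\le C\,\mathcal E\bigl((u-k)_+,(u-k)_+\bigr)\le C\,\mathcal E\bigl(u,(u-k)_+\bigr)=C\!\int_\Omega g\,(u-k)_+\le C\|g\|_{L^p}\|(u-k)_+\|_{L^{p'}(\{u>k\})} .
\]
Since $p>\tfrac n{2s}$ gives $p'<2^\ast_s$, H\"older on $\{u>k\}$ yields $\|(u-k)_+\|_{L^{2^\ast_s}}\le C\|g\|_{L^p}\,|\{u>k\}|^{\frac1{p'}-\frac1{2^\ast_s}}$, and with $(h-k)|\{u>h\}|^{1/2^\ast_s}\le\|(u-k)_+\|_{L^{2^\ast_s}}$ for $h>k$ we obtain $|\{u>h\}|\le (C\|g\|_{L^p})^{2^\ast_s}(h-k)^{-2^\ast_s}|\{u>k\}|^{\beta}$ with $\beta=\tfrac{2^\ast_s}{p'}-1>1$. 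De Giorgi's iteration lemma then gives $u\le C\|g\|_{L^p}$ on $\Omega$, and applying the same to $-u$ proves (ii).

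For (i), set $q=\tfrac{np}{n-2ps}$ and $\gamma=\tfrac{n(p-1)}{n-2ps}>0$, so that $\gamma p'=q$ and $(\gamma+1)\tfrac n{n-2s}=q$. Testing with Lipschitz truncations of $|u|^{\gamma-1}u$ and using the pointwise inequality $(a-b)\bigl(|a|^{\gamma-1}a-|b|^{\gamma-1}b\bigr)\ge\tfrac{4\gamma}{(\gamma+1)^2}\bigl||a|^{(\gamma-1)/2}a-|b|^{(\gamma-1)/2}b\bigr|^2$, coercivity, and Sobolev applied to $|u|^{(\gamma+1)/2}$, one obtains, once the right-hand side is finite,
\[
\|u\|_{L^q}^{\gamma+1}\le C_\gamma\,\mathcal E\bigl(u,|u|^{\gamma-1}u\bigr)=C_\gamma\!\int_\Omega g\,|u|^{\gamma-1}u\le C_\gamma\|g\|_{L^p}\|u\|_{L^q}^{\gamma} .
\]
If $q\le 2^\ast_s$ (i.e.\ $p\le\tfrac{2n}{n+2s}$), the a priori membership $u\in\mathcal H\subset L^{2^\ast_s}(\Omega)\subset L^q(\Omega)$ makes the right-hand side finite, and dividing gives $\|u\|_{L^q}\le C\|g\|_{L^p}$ directly. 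If $q>2^\ast_s$ (i.e.\ $p>\tfrac{2n}{n+2s}$), then $g\in L^p(\Omega)\subset L^{(2^\ast_s)'}(\Omega)$, so testing with $u$ itself gives the starting bound $\|u\|_{L^{2^\ast_s}}\le C\|g\|_{L^p}$, and one iterates the displayed inequality with $q_0=2^\ast_s$, $\gamma_j p'=q_j$, $q_{j+1}=(\gamma_j+1)\tfrac n{n-2s}$; here $q_j\uparrow q$, the $\gamma_j$ stay in a compact subinterval of $(1,\infty)$, the $C_{\gamma_j}$ stay bounded, and telescoping (after normalizing $\|g\|_{L^p}=1$ by homogeneity) yields $\|u\|_{L^q}\le C\|g\|_{L^p}$, with Fatou passing to the limit in $j$.

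The step I expect to be most delicate is the bookkeeping of the Moser scheme for the nonlocal Gagliardo energy: checking that the truncated powers $|u|^{\gamma-1}u$ and $|u|^{(\gamma+1)/2}$ lie in $\mathcal H$, removing the truncation by monotone convergence/Fatou, and verifying that $q_j,\gamma_j$ and $C_{\gamma_j}$ stay uniformly controlled so that the iteration closes with a single constant. All of this is routine in the second-order case; the Stampacchia part of (ii) is comparatively soft, and the only genuinely new feature relative to $L=(-\Delta)^s$ is that one uses the comparability $\mathcal E\simeq[\,\cdot\,]_{H^s}^2$ (Lemma~\ref{lem-H^s}) in place of an identity.
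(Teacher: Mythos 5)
Your proof is essentially correct, but it is worth pointing out that the paper itself contains no proof of Proposition \ref{prop-FR}: the result is imported from \cite{FR}, where it is obtained from pointwise heat-kernel/Riesz-potential bounds for $L$ (ultracontractivity of the semigroup, deduced from the two-sided symbol bounds, combined with the maximum principle), rather than by the energy method. Your route --- De Giorgi--Stampacchia truncation for (ii) and Moser iteration for (i), using only Lemma \ref{lem-H^s} and the fractional Sobolev inequality --- is therefore genuinely different, and the exponent bookkeeping checks out: $\beta=\tfrac{2^\ast_s}{p'}-1>1$ is exactly the condition $p>\tfrac{n}{2s}$, while $\gamma p'=q$ and $(\gamma+1)\tfrac{n}{n-2s}=q$ make $q=\tfrac{np}{n-2ps}$ the fixed point of the Moser scheme, approached monotonically from $2^\ast_s$ precisely because $p<\tfrac{n}{2s}$. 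What your approach buys is not negligible: it uses no absolute continuity of the spectral measure, so it proves the estimate for all operators of the form \eqref{L-singular}--\eqref{ellipt-singular}; this would strengthen the Remark following Proposition \ref{L^infty}, where the authors note that hypothesis \eqref{ellipt} is needed only in order to invoke Proposition \ref{prop-FR}.

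Two points should be made explicit in a final write-up, though neither affects correctness. First, when $p<\tfrac{2n}{n+2s}$ one has $\gamma<1$, and $\min\{|u|,T\}^{\gamma-1}u$ is then \emph{not} a Lipschitz function of $u$ near the origin; the admissible regularization is $\psi_\epsilon(u)$ with $\psi_\epsilon(t)=\bigl((|t|+\epsilon)^{\gamma}-\epsilon^{\gamma}\bigr)\,\mathrm{sgn}(t)$, for which the pointwise (Stroock--Varopoulos type) inequality holds with a constant uniform in $\epsilon$, and one lets $\epsilon\to0$ by Fatou. Second, passing from test functions in $C^\infty_c(\Omega)$ to truncations and truncated powers of $u$ requires these to lie in the $H^s$-closure of $C^\infty_c(\Omega)$; this is automatic when the weak solution is constructed variationally (as in Section \ref{sec9}), but for a literally arbitrary bounded domain the identification of that closure with $\{v\in H^s(\R^n):\,v=0\ \text{in}\ \R^n\setminus\Omega\}$ can fail, so it is cleaner to take the energy space itself as the test class from the outset. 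You correctly flag the Moser bookkeeping as the delicate step; with the two amendments above the argument closes.
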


The last ingredient for the proof of Proposition \ref{L^infty} is the following technical result.

\begin{lem}\label{useful-ineq}
Fix $T>0$ and $\beta\geq0$.
Then, for all real numbers $a,b$, we have
\[\bigl|a_T^\beta a - b_T^\beta b\bigr|^2\leq C(a-b)\bigl(a_T^{2\beta}a - b_T^{2\beta}b\bigr),\]
where $a_T=\min\{|a|,T\}$ and $b_T=\min\{|b|,T\}$.
The constant $C$ depends only on $\beta$.
\end{lem}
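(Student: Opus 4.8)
The plan is to reduce the inequality to a pointwise comparison of the derivatives of two monotone one–dimensional functions, which can then be combined via Cauchy–Schwarz. First I would dispose of trivialities: by exchanging the roles of $a$ and $b$ I may assume $a>b$ (the right-hand side $(a-b)(a_T^{2\beta}a-b_T^{2\beta}b)$ is unchanged under this swap, and it is nonnegative because $t\mapsto t_T^{2\beta}t$ is nondecreasing), while if $a=b$ both sides vanish.

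Next I would introduce the odd, nondecreasing functions $G(t)=t_T^\beta\,t$ and $H(t)=t_T^{2\beta}\,t$, and record their derivatives: $G'(t)=(\beta+1)|t|^\beta$ and $H'(t)=(2\beta+1)|t|^{2\beta}$ for $|t|<T$, while $G'(t)=T^\beta$ and $H'(t)=T^{2\beta}$ for $|t|>T$. In particular $G$ and $H$ are Lipschitz on $\R$, hence absolutely continuous, so $G(a)-G(b)=\int_b^a G'(t)\,dt$ and $H(a)-H(b)=\int_b^a H'(t)\,dt$. By the Cauchy–Schwarz inequality,
\[
\bigl(a_T^\beta a-b_T^\beta b\bigr)^2=\Bigl(\int_b^a G'(t)\,dt\Bigr)^2\le (a-b)\int_b^a \bigl(G'(t)\bigr)^2\,dt,
\]
so the lemma follows once I establish the pointwise bound $\bigl(G'(t)\bigr)^2\le C(\beta)\,H'(t)$ for a.e.\ $t$: integrating it over $[b,a]$ yields precisely $(a_T^\beta a-b_T^\beta b)^2\le C(\beta)(a-b)(a_T^{2\beta}a-b_T^{2\beta}b)$.

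The pointwise bound is a short computation in the two regimes above: for $|t|<T$ the ratio $\bigl(G'(t)\bigr)^2/H'(t)$ equals the constant $(\beta+1)^2/(2\beta+1)$, and for $|t|>T$ it equals $1$; since $(\beta+1)^2\ge 2\beta+1$, the choice $C(\beta)=(\beta+1)^2/(2\beta+1)$ works (for $\beta=0$ everything is an identity), and it depends only on $\beta$, as required. I do not expect any genuine obstacle here; the only points needing minor care are that $G$ and $H$ fail to be differentiable at $t=\pm T$ — harmless, since they are Lipschitz, so the fundamental theorem of calculus still applies and the derivatives are defined a.e. — and noting that both sides of the claimed inequality are automatically nonnegative because $H$ is nondecreasing.
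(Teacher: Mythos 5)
Your proposal is correct and follows essentially the same route as the paper: both introduce the auxiliary functions $z\mapsto z(\min\{|z|,T\})^{\beta}$ and $z\mapsto z(\min\{|z|,T\})^{2\beta}$, apply Cauchy--Schwarz to the integral of the first derivative, and conclude by comparing $(G')^2$ with $H'$ pointwise. The only (cosmetic) difference is that you compute the ratio $(G')^2/H'$ exactly in each regime, while the paper sandwiches each derivative between constant multiples of $(\min\{|z|,T\})^{\beta}$ and $(\min\{|z|,T\})^{2\beta}$ respectively.
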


\begin{proof}
Let
\[f(z)=z\cdot\bigl(\min\{|z|,T\}\bigr)^\beta.\]
Then, we clearly have
\[|f(a)-f(b)|^2=\left(\int_a^bf'\right)^2\leq (a-b)\int_a^b (f')^2.\]
Also,
\[|f(a)-f(b)|^2=\bigl|a_T^\beta a - b_T^\beta b\bigr|^2,\]
so that we only have to show that
\begin{equation}\label{have-to}
 (a-b)\int_a^b (f')^2 \leq (a-b)\bigl(a_T^{2\beta}a - b_T^{2\beta}b\bigr).
\end{equation}
But
\[f'(z)=\left\{\begin{array}{ll} T^\beta & \textrm{if}\ |z|>T \\
(\beta+1)|z|^\beta & \textrm{if}\ |z|<T,
               \end{array}\right.\]
and therefore
\[\bigl(\min\{|z|,T\}\bigr)^\beta \leq f'(z)\leq (\beta+1)\bigl(\min\{|z|,T\}\bigr)^\beta.\]

Similarly, the function
\[g(z)=z\cdot\bigl(\min\{|z|,T\}\bigr)^{2\beta}\]
satisfies
\[\bigl(\min\{|z|,T\}\bigr)^{2\beta} \leq g'(z)\leq (\beta+1)\bigl(\min\{|z|,T\}\bigr)^{2\beta}.\]
Thus,
\[(a-b)\int_a^b(f')^2\leq (\beta+1)^2(a-b)\int_a^bg'=C(a-b)\bigl(g(a)-g(b)\bigr),\]
and this yields \eqref{have-to}.
\end{proof}

We give now the:

\begin{proof}[Proof of Proposition \ref{L^infty}]
We adapt a classical argument of Brezis-Kato for $-\Delta u=f(x,u)$ to the present context of nonlocal equations.

Fix $\beta\geq0$ and $T>1$, and let $u_T=\min\{|u|,T\}$.
By Lemma \ref{useful-ineq}, for all $x,y\in \R^n$,
\begin{equation}\label{ineq-useful}
\bigl|u(x)u_T^\beta(x)-u(y)u_T^\beta(y)\bigr|^2\leq C\bigl(u(x)-u(y)\bigr)\bigl(u(x)u_T^{2\beta}(x)-u(y)u_T^{2\beta}(y)\bigr).
\end{equation}
Hence, using \eqref{ineq-useful}, we find
\[\begin{split}
\int_{\R^n}\int_{\R^n}&\bigl|u(x)u_T^\beta(x)-u(y)u_T^\beta(y)\bigr|^2K(x-y)dx\,dy\\
&\leq C\int_{\R^n}\int_{\R^n}\bigl(u(x)-u(y)\bigr)\bigl(u(x)u_T^{2\beta}(x)-u(y)u_T^{2\beta}(y)\bigr)K(x-y)dx\,dy,\end{split}\]
where we denoted $K(y)=a(y/|y|)|y|^{-n-2s}$.

Moreover, using the equation \eqref{eq}, we also have
\[\int_{\R^n}\int_{\R^n}\bigl(u(x)-u(y)\bigr)\bigl(u(x)u_T^{2\beta}(x)-u(y)u_T^{2\beta}(y)\bigr)K(x-y)dx\,dy=\int_\Omega f(x,u)\,u\, u_T^{2\beta}dx.\]

Now, by \eqref{9dsf}, we have that
\[|f(x,u)|\leq \alpha(x)\bigl(1+|u|\bigr),\]
with
\[\alpha(x)=\frac{|f(x,u)|}{1+|u|}\leq C\bigl(1+|u|^{\frac{4s}{n-2s}}\bigr)\in L^{\frac{n}{2s}}(\Omega).\]
We have used that $u\in L^{\frac{2n}{n-2s}}(\Omega)$, since $u\in H^s(\R^n)$ by Lemma \ref{lem-H^s}.

Combining these facts, we find
\[\int_{\R^n}\int_{\R^n}\bigl|u(x)u_T^\beta(x)-u(y)u_T^\beta(y)\bigr|^2K(x-y)dx\,dy\leq C\int_\Omega\alpha(x)(1+|u|)^2u_T^{2\beta}dx,\]
and thus, using Lemma \ref{lem-H^s},
\[\bigl[uu_T^\beta\bigr]_{H^s(\R^n)}^2\leq C\int_\Omega\alpha(x)(1+|u|)^2u_T^{2\beta}dx.\]
Therefore, by the fractional Sobolev inequality,
\begin{equation}\label{step5}
\left(\int_\Omega |uu_T^\beta|^{\frac{2n}{n-2s}}dx\right)^{\frac{n-2s}{2n}}\leq C_1\int_\Omega\alpha(x)(1+|u|)^2u_T^{2\beta}dx.
\end{equation}

Assume that
\[\int_\Omega |u|^{2+2\beta}dx\leq C_2\]
for some $\beta\geq0$.
Then,
\[\begin{split}
\int_\Omega \alpha(x)|u|^2u_T^{2\beta}dx&\leq M_0\int_\Omega |u|^{2+2\beta}dx+\int_{\{\alpha(x)>M_0\}}\alpha(x)|u|^2u_T^{2\beta}dx\\
&\leq C_2M_0+\varepsilon(M_0)\left(\int_\Omega |uu_T^\beta|^{\frac{2n}{n-2s}}dx\right)^{\frac{n-2s}{2n}},
\end{split}\]
where
\[\varepsilon(M_0)=\left(\int_{\{\alpha(x)>M_0\}}|\alpha(x)|^{n/2s}dx\right)^{2s/n}\longrightarrow 0\]
as $M_0\rightarrow\infty$.
Also, note that we can deal with $\int_\Omega\alpha(x) u_T^{2\beta}dx$ in the analogue procedure.

Therefore, taking $M_0$ large enough so that $C_1\varepsilon(M_0)\leq 1/2$, we find
\[\left(\int_\Omega |uu_T^\beta|^{\frac{2n}{n-2s}}dx\right)^{\frac{n-2s}{2n}}\leq CC_2,\]
with $C$ independent of $T$.
Thus, letting $T\rightarrow\infty$, we obtain that
\[\int_\Omega |u|^{(2+2\beta)\frac{n}{n-2s}}dx\leq CC_2.\]
Hence, iterating $\beta_0=0$, $1+\beta_k=(1+\beta_{k-1})\frac{n}{n-2s}$ for $k\geq1$, we conclude that $u\in L^p(\Omega)$ for all $p<\infty$.

Finally, by Proposition \ref{prop-FR} and \eqref{9dsf}, this yields $u\in L^\infty(\Omega)$, as desired.
\end{proof}

\begin{rem}
Notice that Proposition \ref{L^infty} establishes the boundedness of solutions for critical and subcritical nonlinearities $|f(x,u)|\leq C\left(1+|u|^{\frac{n+2s}{n-2s}}\right)$ whenever the operator $L$ satisfies \eqref{L}-\eqref{ellipt}, but the assumption \eqref{ellipt} is only needed in order to apply Proposition \ref{prop-FR}.

For subcritical nonlinearities $|f(x,u)|\leq C(1+|u|^p)$, with $p<\frac{n+2s}{n-2s}$, the result in Proposition \ref{L^infty} could be proved by using the argument in \cite[Theorem 2.3]{DMPV}.
In this proof, only does not need to use Proposition \ref{prop-FR} but only Lemma \ref{lem-H^s}, and thus the result would be true for any operator \eqref{L-singular}-\eqref{ellipt-singular}.
\end{rem}

We can finally give the:

\begin{proof}[Proof of Corollary \ref{cor-uniquecont}]
First, since $f$ is locally Lipschitz and \eqref{subcritical} holds, then
\[|f(x,u)|\leq C\bigl(1+|u|^{\frac{n+2s}{n-2s}}\bigr).\]
Hence, by Proposition \ref{L^infty}, the solution $u$ is bounded, and by Theorem \ref{krylov} $u/d^s\in C^\alpha(\overline\Omega)$.

Assume that $u/d^s|_{\partial\Omega}\equiv0$ on $\partial\Omega$.
Then, by Corollary \ref{corpoh} we have
\[\int_\Omega\left\{F(u)-\frac{2n}{n-2s}\,u\,f(u)\right\}=0.\]
But since
\[F(t)-\frac{2n}{n-2s}\,t\,f(t)>0\]
whenever $t\neq0$, then we find that $u\equiv0$ in $\Omega$.
\end{proof}


\begin{thebibliography}{00}

\bibitem{Abatangelo} N. Abatangelo,
\emph{Large $s$-harmonic functions and boundary blow-up solutions for the fractional Laplacian},
Discrete Contin. Dyn. Syst., to appear.

\bibitem{Bass2} R. Bass, D. Levin,
\emph{Harnack inequalities for jump processes}, Potential Anal. 17 (2002), 375-388.

\bibitem{Bogdan1} K. Bogdan, P. Sztonyk, \emph{Harnack's inequality for stable L\'evy processes}, Potential Anal. 22 (2005), 133-150.

\bibitem{BO} Y. Bozhkov, P. Olver,
\emph{Pohozhaev and Morawetz identities in elastostatics and elastodynamics}, SIGMA 7 (2011), 055.

\bibitem{Caff-survey} L. Caffarelli,
\emph{Nonlocal equations, drifts, and games}, Nonlinear Partial Differential Equations, Abel Symposia 7 (2012), 37-52.

\bibitem{CS} L. Caffarelli, L. Silvestre,
\emph{Regularity theory for fully nonlinear integro-differential equations}, Comm. Pure Appl. Math. 62 (2009), 597-638.

\bibitem{CS3} L. Cafarelli, L. Silvestre, \emph{The Evans-Krylov theorem for nonlocal fully nonlinear equations}, Ann. of Math. 174 (2011), 1163-1187.

\bibitem{Caz} C. Cazacu,
\emph{Schr\"odinger operators with boundary singularities: Hardy inequality, Pohozaev identity and controllability results}, J. Funct. Anal. 263 (2012), 3741-3783.

\bibitem{CZ} K. S. Chou, X.-P. Zhu,
\emph{Some constancy results for nematic liquid crystals and harmonic maps}, Ann. IHP 12 (1995), 99-115.

\bibitem{CT} R. Cont, P. Tankov,
\emph{Financial Modelling With Jump Processes}, Financial Mathematics Series. Chapman \& Hall/CRC, Boca Raton, FL, 2004.

\bibitem{DG-V} A.-L. Dalibard, D. G\'erard-Varet,
\emph{On shape optimization problems involving the fractional Laplacian}, ESAIM Control Optim. Calc. Var. 19 (2013), 976-1013.

\bibitem{DMPV} S. Dipierro, M. Medina, I. Peral, E. Valdinoci,
\emph{Bifurcation results for a fractional elliptic equation with critical exponent in $\R^n$}, preprint arXiv (2014).

\bibitem{DS} J. Dolbeault, R. Sta\'nczy,
\emph{Non-existence and uniqueness results for supercritical semilinear elliptic equations},
Ann. Henri Poincar\'e 10 (2010), 1311-1333.

\bibitem{FJ} M. M. Fall, S. Jarohs,
\emph{Overdetermined problems with fractional Laplacian}, ESAIM: Control Optim. Calc. Var., to appear.

\bibitem{FKV} M. Felsinger, M. Kassmann, P. Voigt,
\emph{The Dirichlet problem for nonlocal operators}, Math. Z, to appear.

\bibitem{FR} X. Fern\'andez-Real, X. Ros-Oton,
\emph{Boundary regularity for the fractional heat equation}, Rev. Real Acad. Cienc. Ser. A Math., to appear.

\bibitem{GLP} G. Giacomin, J.L. Lebowitz, E. Presutti,
\emph{Deterministic and stochastic hydrodynamic equations arising from simple microscopic model systems},
In: Stochastic Partial Differential Equations: Six Perspectives. Math. Surveys Monogr., vol. 64, 107-152. Am. Math. Soc., Providence (1999).

\bibitem{GT} D. Gilbarg, N. S. Trudinger,
\emph{Elliptic Partial Differential Equations Of Second Order}, Grundlehren der Mathematischen Wissenschaften,
Vol. 224. Springer-Verlag, Berlin-New York, 1977.

\bibitem{image3} G. Gilboa, S. Osher,
\emph{Nonlocal operators with applications to image processing}, Multiscale Model. Simul. 7 (2008), 1005-1028.

\bibitem{Grubb} G. Grubb,
\emph{Fractional Laplacians on domains, a development of H\"ormander's theory of $\mu$-transmission pseudodifferential operators},
Adv. Math. 268 (2015), 478-528.

\bibitem{Grubb2} G. Grubb,
\emph{Local and nonlocal boundary conditions for $\mu$-transmission and fractional elliptic pseudodifferential operators}, Anal. PDE 7 (2014), 1649-1682.

\bibitem{Grubb-Poh} G. Grubb,
\emph{Factorization of fractional-order pseudodifferential operators, integration by parts, and a Pohozaev identity}, preprint arXiv (2015).

\bibitem{Han} Z.-C. Han,
\emph{Asymptotic approach to singular solutions for nonlinear elliptic equations involving critical Sobolev exponent},
Ann. Inst. H. Poincar\'e Anal. Non Lin\'eaire 8 (1991), 159-174.

\bibitem{HST} C. Hardin, G. Samorodnitsky, M. Taqqu,
\emph{Non-linear regression of stable random variables}, Ann. Appl. Probab. 1 (1991), 582-612.

\bibitem{K} M. Kassmann,
\emph{A priori estimates for integro-differential operators with measurable kernels}, Calc. Var. Partial Differential Equations 34 (2009), 1-21.

\bibitem{KM} M. Kassmann, A. Mimica,
\emph{Intrinsic scaling properties for nonlocal operators}, J. Eur. Math. Soc., to appear.

\bibitem{KRS} M. Kassmann, M. Rang, R. W. Schwab,
\emph{H\"older regularity for integro-differential equations with nonlinear directional dependence}, Indiana Univ. Math. J. 63 (2014), 1467-1498.

\bibitem{KW} J. Kazdan, F. Warner,
\emph{Curvature functions for compact 2-manifolds}, Ann. of Math. 99 (1974), 14-47.

\bibitem{Levy} P. L\'evy,
\emph{Th\'eorie de l'addition des variables al\'eatoires}, Gauthier-Villars, Paris, 1937.

\bibitem{Nolan2} J. P. Nolan,
\emph{Multivariate Stable Distributions: Approximation, Estimation, Simulation and Identification},
In R. J. Adler, R. E. Feldman, and M. S. Taqqu (Eds.), A Practical Guide to Heavy Tails, 509-526. Boston: Birkhauser (1998).

\bibitem{Nolan3} J.P. Nolan,
\emph{Fitting Data and Assessing Goodness-of-fit with stable distributions}, Applications of Heavy Tailed Distributions in Economics,
Engineering and Statistics, Washington DC, (1999).

\bibitem{Nolan} J. P. Nolan,
\emph{Stable Distributions - Models for Heavy Tailed Data}, Birkhauser, Boston (2016, still not published).
First Chapter available online at the website of the author.

\bibitem{PS} M. Pivato, L. Seco,
\emph{Estimating the spectral measure of a multivariate stable distribution via spherical harmonic analysis}, J. Multivar. Anal. 87 (2003), 219-240.

\bibitem{P} S. I. Pohozaev,
\emph{On the eigenfunctions of the equation $\Delta u + \lambda f(u) = 0$}, Dokl. Akad. Nauk SSSR 165 (1965), 1408-1411.

\bibitem{Pol} D. Pollack,
\emph{Compactness results for complete metrics of constant positive scalar curvature on subdomains of $S^n$}, Indiana Univ. Math. J. 42 (1993), 1441-1456.

\bibitem{PT} W. E. Pruitt, S. J. Taylor,
\emph{The potential kernel and hitting probabilities for the general stable process in $\R^N$}, Trans. Amer. Math. Soc. 146 (1969), 299-321.

\bibitem{RX} S. T. Rachev, H. Xin,
\emph{Test for association of random variables in the domain of attraction of multivariate stable law}, Probab. Math. Statist. 14 (1993), 125-141.

\bibitem{R} F. Rellich,
\emph{Darstellung der Eigenverte von $-\Delta u+\lambda u = 0$
durch ein Randintegral}, Math. Z. 46 (1940), 635-636.

\bibitem{RS-Dir} X. Ros-Oton, J. Serra, \emph{The Dirichlet problem for the fractional Laplacian: regularity up to the boundary}, J. Math. Pures Appl. 101 (2014), 275-302.

\bibitem{RS-Poh} X. Ros-Oton, J. Serra, \emph{The Pohozaev identity for the fractional Laplacian}, Arch. Rat. Mech. Anal 213 (2014), 587-628.

\bibitem{RS-nonexistence} X. Ros-Oton, J. Serra, \emph{Nonexistence results for nonlocal equations with critical and supercritical nonlinearities}, Comm. Partial Differential Equations 40 (2015), 115-133.

\bibitem{RS-K} X. Ros-Oton, J. Serra, \emph{Boundary regularity for fully nonlinear integro-differential equations}, Duke Math. J., to appear.

\bibitem{RS-stable} X. Ros-Oton, J. Serra, \emph{Regularity theory for general stable operators}, preprint arXiv (2014).

\bibitem{RV} X. Ros-Oton, E. Valdinoci, \emph{The Dirichlet problem for nonlocal operators with singular kernels: convex and non-convex domains}, Adv. Math. 288 (2016), 732-790.

\bibitem{ST} G. Samorodnitsky, M. S. Taqqu, \emph{Stable Non-Gaussian Random Processes: Stochastic Models With Infinite Variance}, Chapman and Hall, New York, 1994.

\bibitem{Sc} R. Schoen, \emph{The existence of weak solutions with prescribed singular behavior for a conformally invariant scalar equation}, Comm. Pure Appl. Math. (1988), 317-392.

\bibitem{SS} R. W. Schwab, L. Silvestre, \emph{Regularity for parabolic integro-differential equations with very irregular kernels}, Anal. PDE, to appear.

\bibitem{SV} R. Servadei, E. Valdinoci, \emph{Mountain pass solutions for non-local elliptic operators}, J. Math. Anal. Appl. 389 (2012), 887-898.

\bibitem{soave} N. Soave, E. Valdinoci, \emph{Overdetermined problems for the fractional Laplacian in exterior and annular sets}, preprint arXiv (2014).

\bibitem{Strauss} W. A. Strauss, \emph{Nonlinear Wave Equations}, CBMS Regional Conference Series, Vol. 73, Amer. Math. Soc.,
Providence, R.I., 1989.

\bibitem{TaylorBook} M. Taylor, \emph{Pseudodifferential Operators and Nonlinear PDE}, Progress in Math. 100, Birkhauser, Boston, 1991.

\bibitem{V} R. van der Vorst, \emph{Variational identities and applications to differential systems}, Arch. Rat. Mech. Anal. 116 (1991), 375-398.

\bibitem{Weinberger} H. F. Weinberger, \emph{Remark on the preceding paper of Serrin}, Arch. Rat. Mech. Anal. 43 (1971), 319-320.


\end{thebibliography}
\end{document}